\newtheorem{theorem}{Theorem}[section]
\newtheorem{lemma}[theorem]{Lemma}
\newtheorem{corollary}[theorem]{Corollary} 
\newtheorem{proposition}[theorem]{Proposition} 
\theoremstyle{definition}
\newtheorem{definition}[theorem]{Definition}
\theoremstyle{remark}
\newtheorem{remark}[theorem]{Remark}
\numberwithin{equation}{section}
\newcommand*{\ind}[1]{1_{#1}} 
\newcommand*{\indbr}[1]{1_{\{#1\}}} 
\newcommand*{\EE}{\mathbb{E}} 
\newcommand*{\PP}{\mathbb{P}} 
\newcommand*{\RR}{\mathbb{R}} 
\newcommand*{\eps}{\varepsilon}
\DeclareMathOperator{\Ent}{Ent}	
\DeclareMathOperator{\Var}{Var}	
\DeclareMathOperator{\sgn}{sgn} 
\DeclareMathOperator{\cl}{cl} 	
\newcommand*{\law}[1]{\mathcal{L}(#1)} 
		\newcommand*{\mbeta}{\mathcal{M}_{\beta}(m, \sigma ^{\beta +1})}
		\newcommand*{\mzero}{\mathcal{M}_{0}(m, \sigma )}
		\newcommand*{\mone}{\mathcal{M}_{1}(m, \sigma ^2)}
\newcommand*{\mtilde}{\widetilde{m}}
\newcommand*{\vphi}{\varphi}
\newcommand*{\vphitilde}{\widetilde{\varphi}}
		\newcommand*{\Hb}{H_{\beta}}   
		\newcommand*{\Lb}{H^*_{\beta}} 
		\newcommand*{\Hbn}{H^{(n)}_{\beta}}   
		\newcommand*{\Lbn}{H^{(n)*}_{\beta}} 
		\newcommand*{\Hzero}{H_{0}}   
		\newcommand*{\Lzero}{H^*_{0}} 
		\newcommand*{\Hzerodelta}[1]{H_{0,#1}}   
		\newcommand*{\Lzerodelta}[1]{H^*_{0,#1}} 
		\newcommand*{\Hzerodeltan}[1]{H	^{(n)}_{0,#1}}   
		\newcommand*{\Lzerodeltan}[1]{H^{(n)*}_{0,#1}} 
\newcommand*{\step}[1]{\emph{Step~#1.}}
\newcommand*{\negativespace}{\!\!\!} 
\newcommand*{\itemi}{(i)} 
\newcommand*{\itemii}{(ii)}
\newcommand*{\itemiii}{(iii)}
\newcommand*{\itemiv}{(iv)}
\begin{document}

\title[Modified Log--Sobolev Inequalities]{Modified Log--Sobolev Inequalities\\
for Convex Functions on the Real Line.\\
Sufficient conditions}

\author{Rados{\l}aw Adamczak}
\address{Institute of Mathematics, University of Warsaw, Banacha 2, 02--097 Warsaw, Poland.}
\email{R.Adamczak@mimuw.edu.pl}
\thanks{Research partially
supported by the NCN grant no. 2012/05/B/ST1/00412 (R.A.)}

\author{Micha{\l} Strzelecki}
\address{Institute of Mathematics, University of Warsaw, Banacha 2, 02--097 Warsaw, Poland.}
\email{M.Strzelecki@mimuw.edu.pl}

\subjclass[2010]{Primary: 60E15. Secondary: 26A51, 26B25, 26D10.}
\date{May 20, 2015, revised: December 1, 2015}
\keywords{Concentration of measure, convex functions, logarithmic Sobolev inequalities}

\begin{abstract}
We provide a mild sufficient condition for a probability measure on the real line to satisfy a modified log-Sobolev inequality for convex functions, interpolating between the classical log-Sobolev inequality and a Bobkov-Ledoux type inequality. As a consequence we obtain dimension-free two-level concentration results for convex function of independent random variables with sufficiently regular tail decay.

We also provide a link between modified log-Sobolev inequalities for convex functions and weak transport-entropy inequalities, complementing recent work by Gozlan, Roberto, Samson, and Tetali.
\end{abstract}

\maketitle

\section{Introduction and main results}

\subsection{Background} Over the past few decades the concentration of measure phenomenon has become one of the main themes of high dimensional probability and geometric analysis, with applications to limit theorems, non-asymptotic confidence bounds or random constructions of geometric objects with extremal properties. While initial results on concentration of measure concerned mostly deviation bounds for Lipschitz functions of highly regular random variables, the seminal work by Talagrand \cite{TalConcMeasure} has revealed that if one restricts attention to convex Lipschitz functions, dimension-free concentration of measure holds under much weaker conditions, e.g. for all product probability measures with bounded support. Talagrand's approach relied on his celebrated convex distance inequality related to the analysis of isoperimetric problems for product measures. In subsequent papers other authors adapted tools from the classical concentration of measure theory, such as Poincar\'e and log-Sobolev inequalities or transportation and infimum-convolution inequalities \cite{MR1399224,bobkov-goetze,MR1097258,MR1392329,MR1756011,MR2073425,MR2163396}, for proving tail inequalities for convex functions. More recent work \cite{MR3311918,gozlan} presents deep connections between various proposed approaches.

Talagrand's inequality provides a subgaussian tail estimate for convex Lipschitz functions of independent uniformly bounded random variables (with dimension-independent constants). It is natural to ask whether such a result can hold under weaker assumptions on the tails of underlying random variables. One can easily see that it is not enough to assume a uniform subgaussian tail decay and some stronger regularity is necessary \cite{MR1491527,MR2163396}. In \cite{bobkov-goetze} Bobkov and G\"{o}tze show that for a probability measure on the real line dimension-free concentration for convex functions is equivalent to the convex Poincar\'e inequality and further to a certain regularity of the tail (see also~\cite{MR3311918} for  a refinement of this result in the setting of abstract metric spaces). Moreover the concentration rate in this case is at least exponential. In \cite{MR2163396} a~related assumption on the tail is shown to imply the convex log-Sobolev inequality which allows to obtain dimension-free deviation bounds on the upper tail of convex functions.

The goal of this article is to show that tail conditions interpolating between those considered in \cite{bobkov-goetze} and \cite{MR2163396} lead to modified log-Sobolev inequalities for convex functions (convex counterparts of inequalities introduced by Bobkov-Ledoux \cite{MR1440138} and Gentil-Guillin-Miclo \cite{MR2198019,MR2351133}) and thus also two-level deviation bounds interpolating between the subexponential  and subgaussian tail decay, in the spirit of Talagrand's inequalities for the exponential or Weibull distribution
\cite{MR1122615, MR1097258}. Furthermore we demonstrate a connection of modified log-Sobolev inequalities to weak transportation inequalities. In the case of the classical log-Sobolev inequality such a link has been recently discovered by Gozlan, Roberto, Samson, and Tetali \cite{gozlan}.

\subsection{Main results}\label{subsec:main-results}

To formulate our results we need to introduce the following

\begin{definition}\label{def:class-mbeta}
For $\beta\in[0,1]$, $m>0$ and $\sigma\geq 0$ let $\mbeta$ denote the class of probability distributions $\mu$ on $\RR$ for which
\begin{align*}
\forall_{x\ge m} \quad\nu^+([x,\infty)) &\leq \sigma^{\beta+1}\mu([x,\infty)),\\
\forall_{x\le -m} \quad \nu^-((-\infty,x]) &\leq \sigma^{\beta+1}\mu((-\infty,x]),
\end{align*}
where $\nu^+$ is the measure on $[m,\infty)$ with density $x^{\beta}\mu([x,\infty))$ and $\nu^-$ is the measure on $(-\infty,-m]$ with density $|x|^{\beta}\mu((-\infty, x])$.
\end{definition}

As one can easily see, the condition defining the class $\mbeta$ is equivalent to some type of regular tail decay. More precisely, we have the following proposition, the proof of which is deferred to Section~\ref{sec:proofs-of-main-results}.

\begin{proposition}\label{prop:equiv-class-mbeta}
Let $\mu$ be a probability measure on $\RR$. For  $\beta \in [0,1]$ and $m>0$ the following conditions are equivalent.
\begin{enumerate}
\item[\itemi]  There exists $\sigma \geq 0$, such that $\mu\in\mbeta$.
 \item[\itemii] For some $h>0$, $\alpha<1$ and all $x>m$,
\begin{align*}
\mu([x+h/x^{\beta}, \infty)) & \leq \alpha\mu([x,\infty)),\\
\mu((-\infty, -x-h/x^{\beta}]) & \leq \alpha\mu((-\infty,-x]).
\end{align*}
\end{enumerate}
Moreover, the constants in \itemi{} depend only on the constants in \itemii{} and vice versa. For instance, if \itemi{} holds, we can take $h=2\sigma^{\beta+1}$ and $\alpha = 1/2$.
\end{proposition}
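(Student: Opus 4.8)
The plan is to reduce everything, by symmetry, to the one-sided statement about the right tails: applying the same argument to the pushforward of $\mu$ under $x\mapsto-x$ then yields the left-tail inequalities. So I would set $F(x)=\mu([x,\infty))$, a non-increasing right-continuous function, and note that by definition $\nu^+([x,\infty))=\int_x^\infty t^\beta F(t)\,dt$ for $x\ge m$. In this notation the right-tail part of \itemi{} reads $\int_x^\infty t^\beta F(t)\,dt\le\sigma^{\beta+1}F(x)$ for all $x\ge m$, and the right-tail part of \itemii{} reads $F(x+hx^{-\beta})\le\alpha F(x)$ for all $x>m$.

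For \itemi{}$\Rightarrow$\itemii{} I expect a one-line estimate to suffice. Assume $\sigma>0$ (when $\sigma=0$ the integral condition forces $F\equiv0$ on $(m,\infty)$, so $\mu$ is supported in $[-m,m]$ and \itemii{} holds trivially with, say, $h=1$ and $\alpha=1/2$). Fix $x>m$; on the interval $[x,x+hx^{-\beta}]$ one has $t^\beta\ge x^\beta$ (since $\beta\ge0$) and $F(t)\ge F(x+hx^{-\beta})$ (monotonicity of $F$), whence
\begin{equation*}
h\,F(x+hx^{-\beta})\le\int_x^{x+hx^{-\beta}}t^\beta F(t)\,dt\le\int_x^\infty t^\beta F(t)\,dt\le\sigma^{\beta+1}F(x).
\end{equation*}
Thus \itemii{} holds with $\alpha=\sigma^{\beta+1}/h$, and in particular $h=2\sigma^{\beta+1}$ yields $\alpha=1/2$.

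The reverse implication \itemii{}$\Rightarrow$\itemi{} is where the actual work lies. Given $h$ and $\alpha<1$ as in \itemii{}, I would fix $x>m$ and define a sequence by $x_0=x$, $x_{k+1}=x_k+hx_k^{-\beta}$. This sequence is increasing and bounded below by $m$, so \itemii{} applies at every $x_k$ and, by induction, $F(x_k)\le\alpha^k F(x)$. It also tends to infinity: the mean value theorem applied to $u\mapsto u^{\beta+1}$ gives $x_{k+1}^{\beta+1}-x_k^{\beta+1}\ge(\beta+1)x_k^\beta(x_{k+1}-x_k)=(\beta+1)h\ge h$, hence $x_k^{\beta+1}\ge x^{\beta+1}+kh\to\infty$. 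Consequently $[x,\infty)=\bigcup_{k\ge0}[x_k,x_{k+1})$, and on each piece $F(t)\le F(x_k)$ while $\int_{x_k}^{x_{k+1}}t^\beta\,dt\le x_{k+1}^\beta(x_{k+1}-x_k)=h(x_{k+1}/x_k)^\beta\le h\bigl(1+hm^{-(\beta+1)}\bigr)^\beta\le h\bigl(1+hm^{-(\beta+1)}\bigr)$, using $u^\beta\le u$ for $u\ge1$ and $x_{k+1}/x_k=1+hx_k^{-(\beta+1)}\le 1+hm^{-(\beta+1)}$. Summing the geometric series,
\begin{equation*}
\int_x^\infty t^\beta F(t)\,dt\le h\bigl(1+hm^{-(\beta+1)}\bigr)\sum_{k\ge0}F(x_k)\le\frac{h\bigl(1+hm^{-(\beta+1)}\bigr)}{1-\alpha}\,F(x),
\end{equation*}
which is \itemi{} with $\sigma^{\beta+1}=h(1+hm^{-(\beta+1)})/(1-\alpha)$, depending only on $h,\alpha,m,\beta$.

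I expect the only delicate points to be in this last implication: one must check that the iterates $x_k$ escape to $+\infty$, so the decomposition of $[x,\infty)$ into the intervals $[x_k,x_{k+1})$ is legitimate, and that the ratios $x_{k+1}/x_k$ — equivalently the factors $(x_{k+1}/x_k)^\beta$ that appear when $\int_{x_k}^{x_{k+1}}t^\beta\,dt$ is traded for $x_k^\beta\cdot(x_{k+1}-x_k)$ — stay bounded uniformly in $k$. Both follow from the single estimate $x_k^{\beta+1}\ge x^{\beta+1}+kh\ge m^{\beta+1}$; everything else reduces to monotonicity of $F$, the elementary inequality $u^\beta\le u$ for $u\ge1$, $\beta\in[0,1]$, and summation of a geometric series.
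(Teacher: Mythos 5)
Your proof is correct and follows essentially the same route as the paper: the implication \itemi{}$\Rightarrow$\itemii{} by bounding $\int_x^{x+h/x^\beta} t^\beta\mu([t,\infty))\,dt$ from below, and \itemii{}$\Rightarrow$\itemi{} by iterating $x_{k+1}=x_k+h x_k^{-\beta}$, summing a geometric series, and bounding the ratios $x_{k+1}/x_k$ via $x\ge m$. Your added details (the $\sigma=0$ case and the explicit verification that $x_k\to\infty$) only make explicit what the paper leaves to the reader.
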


Let us recall that a probability measure $\mu$ on $\RR^n$ satisfies the Poincar\'e inequality with constant $C$ for the class $\mathcal{A}$ of real valued functions on $\RR^n$ if for all $f \in \mathcal{A}$,
\begin{displaymath}
\Var f(X) \le C \EE |\nabla f(X)|^2,
\end{displaymath}
where $X$ is a random vector with law $\mu$. The measure $\mu$ satisfies the log-Sobolev inequality with constant $C$ for the class $\mathcal{A}$, if for all $f \in \mathcal{A}$,
\begin{equation}\label{eq:log-Sob-exp}
\Ent e^{f(X)} \le C\EE |\nabla f(X)|^2e^{f(X)},
\end{equation}
where $\Ent Y$ denotes the usual entropy of a non-negative random variable $Y$, i.e.
\begin{displaymath}
\Ent Y = \EE Y\log Y - \EE Y\log \EE Y
\end{displaymath}
if $\EE Y\log Y < \infty$ and $\Ent Y = \infty$ otherwise.

We remark that if $\mathcal{A}$ consists of all smooth functions, one usually states the log-Sobolev inequality in the equivalent form
\begin{equation}\label{eq:log-sob-st}
\Ent f(X)^2 \leq D \EE|\nabla f(X)|^2,
\end{equation}
where $D = 4 C$. We have decided to use the form \eqref{eq:log-Sob-exp}, since our main motivation is the study of concentration properties for convex functions (note that for $\mathcal{A}$ being the class of convex functions \eqref{eq:log-Sob-exp} and \eqref{eq:log-sob-st} are no longer equivalent).

Using Proposition \ref{prop:equiv-class-mbeta} and the results from \cite{bobkov-goetze} one easily obtains that the condition $\mu \in \mzero$ for some $m, \sigma$ is equivalent to the Poincar\'e inequality for convex functions and further to dimension-free subexponential concentration for convex Lipschitz functions on $\RR^n$ equipped with the measure $\mu^{\otimes n}$.
On the other hand as shown in \cite{MR2163396} the condition $\mone$ implies the log-Sobolev inequality for convex functions and a dimension-free subgaussian inequality for upper tails of convex Lipschitz functions on $(\RR^n,\mu^{\otimes n})$, see Corollary \ref{cor:conc-functions2} below.

In what follows  $\|\cdot\|_p$ denotes the $\ell_p^n$ norm, i.e. for $x =(x_1,\ldots,x_n)\in \RR^n$,  $\|x\|_p = (|x_1|^p+\ldots+|x_n|^p)^{1/p}$. For $p = 2$ we will write simply $|x|$ instead of $\|x\|_p$. The symbol $\law{X}$ stands for the law of a random variable $X$.

Our main result is the following theorem.

\begin{theorem}\label{thm:main}
Let $X_1, \ldots, X_n$ be independent random variables such that $\law{X_i}\in\mbeta$ and let $\vphi\colon\RR^n\to\RR$ be a smooth convex function. Then, for $\beta\in(0,1]$,
\begin{align}\label{eq:mod-lsi-beta}
\MoveEqLeft\Ent e^{\vphi(X_1,\ldots, X_n)}\\
&\leq C(\beta, m, \sigma) \EE \Big(|\nabla\vphi(X_1,\ldots, X_n)|^2\lor\|\nabla\vphi(X_1,\ldots, X_n)\|_{\frac{\beta+1}{\beta}}^{\frac{\beta+1}{\beta}}\Big) e^{\vphi(X_1,\ldots, X_n)} .\notag
\end{align}
If $\beta = 0$ and $|\partial_i \vphi(x)| \leq 1/(2(m+3\sigma))$ for $i\in\{1,\ldots, n\}$ and all $x \in \RR^n$, then
\begin{equation}\label{eq:mod-lsi-zero}
\Ent e^{\vphi(X_1,\ldots, X_n)}\leq C(\beta=0, m, \sigma) \EE |\nabla\vphi(X_1,\ldots, X_n)|^2 e^{\vphi(X_1,\ldots, X_n)}.
\end{equation}
\end{theorem}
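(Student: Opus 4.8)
The standard route to a modified log-Sobolev inequality of this type is a tensorization plus a one-dimensional argument. First I would establish a one-dimensional version of the theorem: if $\law{X}\in\mbeta$ and $f\colon\RR\to\RR$ is smooth and convex, then
\begin{equation*}
\Ent e^{f(X)}\le C(\beta,m,\sigma)\,\EE\bigl(|f'(X)|^2\lor|f'(X)|^{\frac{\beta+1}{\beta}}\bigr)e^{f(X)}
\end{equation*}
(with the extra Lipschitz restriction when $\beta=0$). Granting this, the $n$-dimensional statement follows from the subadditivity (tensorization) property of entropy: writing $\Ent_{\mu^{\otimes n}}e^{\vphi}\le\sum_{i=1}^n\EE\,\Ent_{\mu_i}e^{\vphi}$, where $\Ent_{\mu_i}$ acts on the $i$-th coordinate with the others frozen, and noting that $x_i\mapsto\vphi(x_1,\ldots,x_n)$ is smooth and convex for fixed remaining coordinates, the one-dimensional inequality applied coordinatewise bounds each term by $C\,\EE(|\partial_i\vphi|^2\lor|\partial_i\vphi|^{(\beta+1)/\beta})e^{\vphi}$. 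Summing over $i$ and using $\sum_i|\partial_i\vphi|^2=|\nabla\vphi|^2$ and $\sum_i|\partial_i\vphi|^{(\beta+1)/\beta}=\|\nabla\vphi\|_{(\beta+1)/\beta}^{(\beta+1)/\beta}\le|\nabla\vphi|^2\lor\|\nabla\vphi\|_{(\beta+1)/\beta}^{(\beta+1)/\beta}$ gives \eqref{eq:mod-lsi-beta}; the $\beta=0$ case is identical, with $(\beta+1)/\beta$ formally absent and the hypothesis on partial derivatives carrying over coordinatewise.

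For the one-dimensional inequality I would follow the Hardy-type / Bobkov--Götze scheme, working with the quantity one gets from the variational characterization of entropy. A convenient reduction is to monotone convex functions: by splitting $\RR$ at the point where $f$ attains its minimum (convexity makes $f$ nonincreasing to the left and nondecreasing to the right) one reduces, up to constants, to controlling the contributions of $[m_0,\infty)$ and $(-\infty,-m_0]$ separately, where on each half-line $f$ is monotone; the bounded central part contributes harmlessly. On the right half-line the tail condition $\nu^+([x,\infty))\le\sigma^{\beta+1}\mu([x,\infty))$ — equivalently, via Proposition~\ref{prop:equiv-class-mbeta}, the geometric-decay condition $\mu([x+h/x^\beta,\infty))\le\alpha\mu([x,\infty))$ — is exactly what is needed to run a Hardy-type estimate: it lets one compare $\int_x^\infty(\cdot)\,d\mu$ to $s^\beta\mu([s,\infty))$-weighted integrals of the derivative, producing the $|f'|^{(\beta+1)/\beta}$ term (the conjugate exponent of $\beta+1$, after a Young inequality balancing $f'$ against the weight $x^\beta$) together with a Poincaré-type $|f'|^2$ term coming from the regime where $f'$ is small. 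This is the technical heart, and likely the step the authors develop over several pages with auxiliary Hardy-type functionals $\Hb,\Lb$ hinted at in the preamble's macro list.

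The main obstacle, as is usual for modified log-Sobolev inequalities for the non-convex-invariant class of convex functions, is the one-dimensional estimate: one cannot appeal to the symmetric form \eqref{eq:log-sob-st} (the excerpt explicitly warns that \eqref{eq:log-Sob-exp} and \eqref{eq:log-sob-st} are inequivalent for convex $f$), so the argument must stay with $e^f$ and exploit monotonicity of $f'$ rather than, say, $|\nabla f|$-symmetry. Concretely, one must handle the interpolation between the two gradient terms: for large $|f'|$ the Weibull-type ($\beta>0$) or exponential-type ($\beta=0$) tail forces the $(\beta+1)/\beta$-power (resp.\ the Lipschitz restriction), while for small $|f'|$ one needs a convex Poincaré inequality, which itself comes from $\mu\in\mzero\supseteq\mbeta$ for $\beta\ge0$. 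Matching constants across these regimes, and checking that the central bounded part of $f$ (where $f'$ changes sign) can be absorbed, are the places where care is needed; everything else is tensorization and bookkeeping.
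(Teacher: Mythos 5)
Your global skeleton (tensorize the entropy, reduce to $n=1$, then prove a one-dimensional inequality for convex $f$) is exactly the paper's first step, and the bookkeeping $\sum_i|\partial_i\vphi|^2=|\nabla\vphi|^2$, $\sum_i|\partial_i\vphi|^{(\beta+1)/\beta}=\|\nabla\vphi\|_{(\beta+1)/\beta}^{(\beta+1)/\beta}$ is fine. The problem is that everything after that is deferred: the one-dimensional estimate, which you yourself identify as ``the technical heart,'' is not proved but only described as ``a Hardy-type / Bobkov--G\"otze scheme,'' and that scheme is not merely unexecuted --- it is doubtful as stated. Membership in $\mbeta$ is a pure tail-regularity condition: it is compatible with atoms, with densities vanishing on subintervals of the support, etc., so no Hardy/capacity-type criterion (which would yield the inequality for \emph{all} smooth monotone functions on a half-line, as in Barthe--Roberto or Bobkov--G\"otze) can follow from it; the paper's Section~4 uses such criteria only for the converse, non-convex statement and under an extra density assumption. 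Convexity must instead be exploited pointwise, and the reduction ``split at the minimum and treat each monotone piece'' is exactly where the convex class causes trouble, because truncating or restricting $f$ destroys the structure one needs (the paper makes this point explicitly for Corollary~\ref{cor:conc-sets}).

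What the paper actually does in dimension one is different in substance. It starts from the covariance bound $\Ent e^{\vphi(X)}\le\frac12\,\EE(\vphi(X)-\vphi(Y))(e^{\vphi(X)}-e^{\vphi(Y)})$ ($Y$ an independent copy), rewrites it as $\int\!\!\int \vphi'(x)\vphi'(y)e^{\vphi(y)}\PP(X\le x\wedge y)\PP(X\ge x\vee y)\,dx\,dy$, and estimates this kernel quadrant by quadrant (Lemmas~\ref{lem:ra4} and~\ref{lem:ra3}), with Lemma~\ref{lem:ra1} playing the role of your ``comparison against the weight $x^\beta$.'' The hard region $(x_0\vee\mtilde,\infty)$ requires: the pointwise convexity bound $\vphi(y)^\beta\le\vphi'(y)^\beta y^\beta$; a carefully chosen threshold $\delta=1/(2(\mtilde+\sigma^{\beta+1}))$ and the auxiliary function $\vphitilde$ obtained by linearizing $\vphi$ beyond the last point $x_1$ where $\vphi'\le\delta$ (this is what lets one control $\EE\vphitilde(X)^2e^{\vphitilde(X)}$ by $\EE\vphi'(X)^2e^{\vphi(X)}$ through integration by parts, Cauchy--Schwarz and an absorption step); and only then Young's inequality with exponents $1/(1-\beta)$, $1/\beta$ to produce the $|\vphi'|^{(\beta+1)/\beta}$ term, followed by a second absorption of $\int\vphi\vphi'e^{\vphi}\PP(X\ge y)\,dy$ after choosing $\eps$ small. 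None of these ingredients (the covariance representation, the $\vphitilde$ truncation-by-linearization, the two self-improving absorption arguments, the $\beta=0$ role of the hypothesis $\vphi'\le 1/(2(m+3\sigma))$, which forces $x_1=\infty$) appears in your proposal, so the core of the theorem is missing rather than merely sketched differently.
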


\begin{remark}
Note that in the case $\beta = 1$, \eqref{eq:mod-lsi-beta} reduces to the classical log-Sobolev inequality
\begin{displaymath}
\Ent e^{\vphi(X_1,\ldots, X_n)} \le C(m, \sigma) \EE |\nabla\vphi(X_1,\ldots, X_n)|^2 e^{\vphi(X_1,\ldots, X_n)}.
\end{displaymath}
The case $\beta \in (0,1)$ corresponds to the modified log-Sobolev inequality introduced and studied by Gentil, Guillin, Miclo \cite{MR2198019} for the class of smooth (not necessarily convex) functions. The inequality \eqref{eq:mod-lsi-zero} for smooth functions was introduced and studied by Bobkov and Ledoux \cite{MR1440138}.
\end{remark}

\begin{remark}
In the above theorem and the following corollaries the assumption about convexity of the function $\vphi$ can be relaxed to separate convexity (cf. \cite{MR1399224}).
\end{remark}

By a version of Herbst's argument we obtain the following corollary, concerning the tail behaviour of smooth convex functions. Below $C$ denotes the constant $C(\beta,m,\sigma)$ from Theorem~\ref{thm:main}.

\begin{corollary} \label{cor:conc-functions}
Let $X_1, \ldots, X_n$ be independent random variables such that $\law{X_i}\in\mbeta$ and let $\vphi\colon\RR^n\to\RR$ be a smooth convex Lipschitz function. Then, for $\beta \in(0,1]$ and all $t\geq 0$,
\begin{multline*}
\PP(\vphi(X_1,\ldots, X_n)\geq  \EE \vphi(X_1,\ldots, X_n) + t)\\
\leq  \exp\Big(- \frac{3}{16} \min\Big\{\frac{t^2}{C\sup |\nabla\vphi|^2}, \frac{t^{1+\beta}}{C^{\beta} \sup \|\nabla\vphi\|_{(\beta+1)/\beta}^{1+\beta}} \Big\}\Big).
\end{multline*}
If $\beta=0$, then for all $t\geq 0$,
\begin{multline*}
\PP(\vphi(X_1,\ldots, X_n)\geq \EE \vphi(X_1,\ldots, X_n) + t)\\
\leq  \exp\Big(- \frac{1}{4} \min\Big\{\frac{t^2}{C\sup |\nabla\vphi|^2}, \frac{t}{(m+3\sigma)\sup \|\nabla\vphi\|_{\infty}} \Big\}\Big).
\end{multline*}
\end{corollary}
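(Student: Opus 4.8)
The plan is to run the classical Herbst argument with Theorem~\ref{thm:main} in place of the ordinary log-Sobolev inequality. Fix $t\ge 0$, set $X=(X_1,\dots,X_n)$, and---replacing $\vphi$ by $\vphi-\EE\vphi(X)$, which changes neither $\nabla\vphi$ nor, when $\beta=0$, the bound on $|\partial_i\vphi|$---assume $\EE\vphi(X)=0$, so that the task is to bound $\PP(\vphi(X)\ge t)$. Put $L:=\sup|\nabla\vphi|$ and $L_\beta:=\sup\|\nabla\vphi\|_{(\beta+1)/\beta}$, both finite since $\vphi$ is Lipschitz. First I would note that the log-Laplace transform $K(\lambda):=\log\EE e^{\lambda\vphi(X)}$ is finite---and therefore smooth, with derivatives obtained by differentiating under the integral sign---on the range of $\lambda$ that will be needed: iterating Proposition~\ref{prop:equiv-class-mbeta}\itemii{} shows that each $\mu\in\mbeta$ has both tails decaying at least as fast as $e^{-c|x|^{\beta+1}}$ for some $c>0$, which for $\beta\in(0,1]$ gives all exponential moments of the $X_i$ (so that $K$ is finite on $\RR$, as $\vphi$ grows at most linearly), and for $\beta=0$ gives exponential moments of sufficiently high order---thanks to the explicit constants permitted in Proposition~\ref{prop:equiv-class-mbeta}---to cover the bounded range of $\lambda$ used in that case.

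For $\beta\in(0,1]$ I would apply \eqref{eq:mod-lsi-beta} to $\lambda\vphi$ for $\lambda>0$. Since $\nabla(\lambda\vphi)=\lambda\nabla\vphi$ and $(u,v)\mapsto u\lor v$ is nondecreasing, the right-hand side is at most $C\bigl(\lambda^2L^2\lor\lambda^{(\beta+1)/\beta}L_\beta^{(\beta+1)/\beta}\bigr)\EE e^{\lambda\vphi}$. Combined with the standard identity
\[
\frac{d}{d\lambda}\Bigl(\frac{K(\lambda)}{\lambda}\Bigr)=\frac{\Ent e^{\lambda\vphi(X)}}{\lambda^2\,\EE e^{\lambda\vphi(X)}},
\]
this gives $\frac{d}{d\lambda}\bigl(K(\lambda)/\lambda\bigr)\le C\bigl(L^2\lor\lambda^{(1-\beta)/\beta}L_\beta^{(\beta+1)/\beta}\bigr)$. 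Integrating from $0$, with $K(\lambda)/\lambda\to\EE\vphi(X)=0$ as $\lambda\downarrow 0$, and computing $\int_0^\lambda\bigl(L^2\lor s^{(1-\beta)/\beta}L_\beta^{(\beta+1)/\beta}\bigr)\,ds\le\max\{L^2\lambda,\,L_\beta^{(\beta+1)/\beta}\lambda^{1/\beta}\}$ by splitting at the crossover point, I obtain after multiplying by $\lambda$ that, for every $\lambda>0$,
\[
\log\EE e^{\lambda\vphi(X)}\le C\max\bigl\{L^2\lambda^2,\,L_\beta^{(\beta+1)/\beta}\lambda^{(\beta+1)/\beta}\bigr\}.
\]

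By Markov's inequality, $\PP(\vphi(X)\ge t)\le\exp\bigl(-\lambda t+C\max\{L^2\lambda^2,\,L_\beta^{(\beta+1)/\beta}\lambda^{(\beta+1)/\beta}\}\bigr)$; I would then take $\lambda=\min\bigl\{\tfrac{t}{4CL^2},\,\bigl(\tfrac{t}{4CL_\beta^{(\beta+1)/\beta}}\bigr)^{\beta}\bigr\}$, for which each term in the maximum is at most $\tfrac{\lambda t}{4}$, so the exponent is $\le-\tfrac34\lambda t$; using $(L_\beta^{(\beta+1)/\beta})^{\beta}=L_\beta^{\beta+1}$ and $4^{\beta+1}\le 16$ for $\beta\in(0,1]$ gives $\tfrac34\lambda t\ge\tfrac{3}{16}\min\bigl\{t^2/(CL^2),\,t^{\beta+1}/(C^{\beta}L_\beta^{\beta+1})\bigr\}$, i.e.\ the first claimed bound. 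For $\beta=0$ the derivative hypothesis $|\partial_i\vphi|\le\tfrac1{2(m+3\sigma)}$ means \eqref{eq:mod-lsi-zero} may be applied to $\lambda\vphi$ only when $\lambda\le\lambda_0:=\bigl(2(m+3\sigma)\sup\|\nabla\vphi\|_\infty\bigr)^{-1}$; running the same Herbst step then gives $\log\EE e^{\lambda\vphi(X)}\le CL^2\lambda^2$ for $\lambda\in(0,\lambda_0]$, and choosing $\lambda=\min\{t/(2CL^2),\lambda_0\}$ and distinguishing the cases $t\le 2CL^2\lambda_0$ and $t>2CL^2\lambda_0$ yields the exponent $-\tfrac14\min\bigl\{t^2/(CL^2),\,t/((m+3\sigma)\sup\|\nabla\vphi\|_\infty)\bigr\}$.

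Nothing here is genuinely hard once Theorem~\ref{thm:main} is in hand; the computation is a routine variant of Herbst's argument. The only points that need attention are the a priori finiteness and differentiability of $K$ on the relevant $\lambda$-interval (handled as above from the tail estimates implied by membership in $\mbeta$), the additional restriction $\lambda\le\lambda_0$ forced by the derivative condition when $\beta=0$, and the bookkeeping with numerical constants in the two-regime optimisation over $\lambda$ needed to arrive precisely at $\tfrac{3}{16}$ and $\tfrac14$; this last, essentially arithmetic, point is the one I expect to consume the most care.
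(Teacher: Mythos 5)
Your proposal is correct and follows essentially the same route as the paper: apply Theorem~\ref{thm:main} to $\lambda\vphi$, run Herbst's argument to bound $\log\EE e^{\lambda\vphi(X)}$ (with the extra restriction $\lambda\le\lambda_0$ forced by the gradient condition when $\beta=0$), and then use Chebyshev and optimise in $\lambda$. The only cosmetic difference is that you carry out the optimisation by an explicit two-regime choice of $\lambda$, whereas the paper packages it via the Legendre transform $\Lb$ and Lemma~\ref{lem:H-H*}(b); both yield the constants $\tfrac{3}{16}$ and $\tfrac14$.
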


Using standard smoothing arguments one can also obtain a result for not necessarily smooth functions, expressed in terms of their Lipschitz constants with respect to the $\ell_2^n$ and $\ell_{1+\beta}^n$ norms.

\begin{corollary} \label{cor:conc-functions2}
Let $X_1, \ldots, X_n$ be independent random variables such that $\law{X_i}\in\mbeta$ and let $\vphi\colon\RR^n\to\RR$ be a convex function such that
\begin{equation*}
|\vphi(x)- \vphi(y)| \leq L_2 \|x-y\|_2
\end{equation*}
and
\begin{equation*}
 |\vphi(x)- \vphi(y)| \leq L_{1+\beta} \|x-y\|_{1+\beta}
\end{equation*}
for all $x,y\in\RR^n$ and some $L_2, L_{1+\beta}<\infty$.	
Then, for $\beta\in(0,1]$ and all $t\geq 0$,
\begin{equation*}
\PP(\vphi(X_1,\ldots, X_n)\geq \EE \vphi(X_1,\ldots, X_n) + t)
\leq  \exp\Big(- \frac{3}{16} \min\Big\{\frac{t^2}{C L_2^2}, \frac{t^{1+\beta}}{C^{\beta} L_{1+\beta}^{1+\beta}} \Big\}\Big).
\end{equation*}
If $\beta=0$, then for all $t\geq 0$,
\begin{equation*}
\PP(\vphi(X_1,\ldots, X_n)\geq \EE \vphi(X_1,\ldots, X_n)+t)
\leq  \exp\Big(- \frac{1}{4} \min\Big\{\frac{t^2}{C L_2^2}, \frac{t}{(m+3\sigma)L_1} \Big\}\Big).
\end{equation*}
\end{corollary}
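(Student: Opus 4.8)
\emph{Proof sketch.}
The plan is to deduce Corollary~\ref{cor:conc-functions2} from Corollary~\ref{cor:conc-functions} by a routine mollification argument. Write $X=(X_1,\dots,X_n)$. Fix a nonnegative symmetric function $\rho\in C^\infty(\RR^n)$ supported in the Euclidean unit ball with $\int\rho=1$, set $\rho_\eps(y)=\eps^{-n}\rho(y/\eps)$, and put $\vphi_\eps:=\vphi*\rho_\eps$. First I would record the standard properties of $\vphi_\eps$. It is $C^\infty$ by differentiation under the integral sign, and convex, being an average of the convex functions $x\mapsto\vphi(x-y)$. Moreover $\nabla\vphi_\eps(x)=\int\nabla\vphi(x-y)\rho_\eps(y)\,dy$, where $\nabla\vphi$ (which exists a.e., $\vphi$ being convex hence locally Lipschitz) satisfies $\|\nabla\vphi\|_2\le L_2$ and $\|\nabla\vphi\|_{(\beta+1)/\beta}\le L_{1+\beta}$ a.e.\ — the latter exponent being the conjugate of $1+\beta$, with the convention that it equals $\infty$ when $\beta=0$, in which case the bound reads $\|\nabla\vphi\|_\infty\le L_1$. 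By convexity of the norms this gives $\sup_x|\nabla\vphi_\eps(x)|\le L_2$ and $\sup_x\|\nabla\vphi_\eps(x)\|_{(\beta+1)/\beta}\le L_{1+\beta}$, so $\vphi_\eps$ is in particular smooth, convex and globally Lipschitz. Finally, since $\vphi$ is $L_2$-Lipschitz with respect to $\|\cdot\|_2$ and $\rho_\eps$ is supported in the ball of radius $\eps$, we have $\|\vphi_\eps-\vphi\|_\infty\le L_2\eps$, hence $\vphi_\eps\to\vphi$ uniformly.

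Next I would apply Corollary~\ref{cor:conc-functions} to $\vphi_\eps$ and insert the two gradient bounds above; since the right-hand sides of the estimates in that corollary are nonincreasing in $\sup|\nabla\vphi_\eps|$ and in $\sup\|\nabla\vphi_\eps\|_{(\beta+1)/\beta}$, this yields, for every $\eps>0$ and $t\ge0$, exactly the asserted two-level bound with $\EE\vphi(X)$ replaced by $\EE\vphi_\eps(X)$ — the constants $C$, $L_2$, $L_{1+\beta}$ (resp.\ $L_1$) being unchanged, only the centering depending on $\eps$.

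It remains to pass to the limit $\eps\to0$. Since $\law{X_i}\in\mbeta$, Proposition~\ref{prop:equiv-class-mbeta} shows that each $X_i$ has at least sub-exponential tails, so $\EE|X|<\infty$; as $|\vphi(x)|\le|\vphi(0)|+L_2|x|$, the mean $\EE\vphi(X)$ is finite (if it were $+\infty$ the claimed inequality would be trivially true), and $\|\vphi_\eps-\vphi\|_\infty\le L_2\eps$ gives $\EE\vphi_\eps(X)\to\EE\vphi(X)$. Given $\delta>0$, fix $\eps$ so small that $\|\vphi_\eps-\vphi\|_\infty\le\delta$ and $|\EE\vphi_\eps(X)-\EE\vphi(X)|\le\delta$; then
\[
\{\vphi(X)\ge\EE\vphi(X)+t\}\subseteq\{\vphi_\eps(X)\ge\EE\vphi_\eps(X)+t-2\delta\},
\]
and the bound from the previous step controls $\PP(\vphi(X)\ge\EE\vphi(X)+t)$ by the desired expression evaluated at $t-2\delta$. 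Letting $\delta\to0$ and using continuity of that expression in $t$ completes the proof, both in the range $\beta\in(0,1]$ and in the case $\beta=0$. The only slightly delicate point — the ``main obstacle'', such as it is — is this limiting step: one must pass from a deviation bound for $\vphi_\eps$ about \emph{its own} mean to one for $\vphi$ about \emph{its} mean, and it is precisely for this that uniform rather than merely pointwise convergence $\vphi_\eps\to\vphi$ is needed, which in turn is what the global Lipschitz hypothesis guarantees. Everything else is mechanical.
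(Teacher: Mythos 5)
Your proposal is correct and follows essentially the same route as the paper: mollify $\vphi$ (the paper convolves with a Gaussian kernel, you with a compactly supported one), check that the smoothed function is convex with the same gradient bounds $L_2$ and $L_{1+\beta}$ inherited via the a.e.\ dual-norm bounds on $\nabla\vphi$, apply Corollary~\ref{cor:conc-functions}, and conclude by uniform convergence $\vphi_\eps\to\vphi$. The only nitpick is the wording that the right-hand side of Corollary~\ref{cor:conc-functions} is ``nonincreasing'' in the gradient sups — it is nondecreasing, which is exactly what lets you replace the sups by $L_2$ and $L_{1+\beta}$; your argument uses the correct direction.
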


Finally, we can express concentration in terms of enlargements of convex sets. The proof of this corollary is a modification of the argument for smooth functions, taking into account some additional difficulties related to the fact that in the convex situation one cannot truncate the function (as this operation destroys convexity).

\begin{corollary}\label{cor:conc-sets}
Let $X_1, \ldots, X_n$ be independent random variables such that $\law{X_i}\in\mbeta$ and let $A\in\RR^n$ be a convex set with
$\PP((X_1,\ldots, X_n)\in A)\geq 1/2$. Then, for all $r\geq 0$,
\begin{equation*}
\PP((X_1,\ldots, X_n)\notin A + r^{1/2} B_2 + r^{1/(1+\beta)} B_{1+\beta}) \leq e^{-C' r}
\end{equation*}
for some constant $C'$ depending only on $\beta$ and $C(\beta,m,\sigma)$ from Theorem~\ref{thm:main} (and additionally on $m+3\sigma$ in the case $\beta=0$).
\end{corollary}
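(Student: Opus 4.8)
The plan is to deduce Corollary~\ref{cor:conc-sets} from the modified log-Sobolev inequality of Theorem~\ref{thm:main} via a Herbst-type argument applied to a carefully chosen convex approximation of the distance to $A$. The natural candidate is the function $f(x) = d_{B_2,B_{1+\beta}}(x,A)$, the infimum over $a\in A$ of some gauge measuring displacement in the two norms; since $A$ is convex, such a distance-like function is convex, and the set $A + r^{1/2}B_2 + r^{1/(1+\beta)}B_{1+\beta}$ is exactly a sublevel set $\{f \le g(r)\}$ for an appropriate increasing $g$. The subtlety flagged in the statement — that one cannot truncate a convex function — means we cannot directly run Herbst on $e^{\lambda f}$ and then argue as in the smooth case; instead I would work with $\vphi_\lambda(x) = \lambda \min(f(x), R)$ only after checking that $\min(f,R)$ is still convex, which it is \emph{not} in general. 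So the cleaner route is to bound the Laplace transform of $f$ itself directly.

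Concretely, first I would record that for a convex $1$-Lipschitz-in-both-norms function $f$ one has $|\nabla f|\le 1$ and $\|\nabla f\|_{(\beta+1)/\beta} \le 1$ a.e., hence the right-hand side of \eqref{eq:mod-lsi-beta} applied to $\vphi = \lambda f$ is bounded by $C(\lambda^2 \lor \lambda^{(\beta+1)/\beta})\EE e^{\lambda f}$. Setting $H(\lambda) = \EE e^{\lambda f}$, the entropy inequality becomes the differential inequality $\lambda H'(\lambda) - H(\lambda)\log H(\lambda) \le C(\lambda^2\lor\lambda^{(\beta+1)/\beta}) H(\lambda)$, i.e. $(\frac1\lambda \log H(\lambda))' \le C(1 \lor \lambda^{(1-\beta)/\beta})$. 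Integrating from $0$, using $\frac1\lambda\log H(\lambda)\to \EE f$ as $\lambda\to 0^+$, yields $\log \EE e^{\lambda(f - \EE f)} \le C\lambda^2$ for $\lambda \le 1$ and $\log\EE e^{\lambda(f-\EE f)}\le C'\lambda^{(\beta+1)/\beta}$ for $\lambda \ge 1$ (with the ranges and constants adjusted so the two regimes match up). Chebyshev's inequality then gives, after optimizing over $\lambda$, a two-level bound $\PP(f \ge \EE f + t) \le \exp(-c\min(t^2, t^{\beta+1}))$ (rescaled by $C$), exactly the shape of Corollary~\ref{cor:conc-functions}.

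To pass from this deviation bound for $f$ to the set enlargement, I would use the assumption $\PP(X\in A)\ge 1/2$, i.e. $\PP(f = 0)\ge 1/2$, which forces $\Med f = 0$ and hence $\EE f \le $ a controlled constant (obtainable by integrating the tail bound, or by the standard mean-median comparison for measures with exponential-type concentration). Then $\{X\notin A + r^{1/2}B_2 + r^{1/(1+\beta)}B_{1+\beta}\} = \{f(X) > g(r)\}$ where $g(r)\asymp r^{1/2}\lor r^{1/(1+\beta)}$ is chosen so that $\min(g(r)^2, g(r)^{\beta+1}) \gtrsim r$; feeding $t = g(r) - \EE f$ into the deviation bound produces $\PP(X\notin A+\cdots)\le e^{-C'r}$ once $r$ exceeds an absolute threshold, and the bound for small $r$ is trivial after enlarging the constant (since the left side is $\le 1$). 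In the case $\beta = 0$ the same scheme applies but one must keep track of the gradient constraint $|\partial_i\vphi|\le 1/(2(m+3\sigma))$ from \eqref{eq:mod-lsi-zero}, which is why $C'$ then depends additionally on $m+3\sigma$; here $f$ must be rescaled by that factor before applying the inequality.

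The main obstacle is the one the authors themselves signal: making the distance function genuinely convex and simultaneously controlling both of its Lipschitz seminorms, while also coping with the fact that $\min(f,R)$ — the natural device for ensuring the Laplace transforms are finite and for running Herbst rigorously — is not convex. I would circumvent this either by verifying a priori that $\EE e^{\lambda f} < \infty$ for small $\lambda$ directly from $\law{X_i}\in\mbeta$ (the tail condition in Proposition~\ref{prop:equiv-class-mbeta}\itemii{} gives exponential-type integrability of each coordinate, hence of the Lipschitz function $f$), which legitimizes the differential-inequality computation without truncation, or by replacing $\min(f,R)$ with a convex smoothing such as $R\log(1 + e^{(f-R)/R})$-type cutoffs and passing to the limit — but the first approach is cleaner and I would pursue it. A secondary technical point is the non-smoothness of $f$: I would first prove the statement for a smooth convex Lipschitz approximation of $f$ (infimal convolution with a smooth bump, as in the proof of Corollary~\ref{cor:conc-functions2}) and then let the smoothing parameter vanish.
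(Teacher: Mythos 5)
There is a genuine gap at the center of your argument: the function $f$ you rely on does not exist. You require $f$ to be simultaneously (a) convex, vanishing on $A$, (b) $1$-Lipschitz with respect to both $\|\cdot\|_2$ and $\|\cdot\|_{1+\beta}$, and (c) such that $\{f\le g(r)\}$ is contained in $A + r^{1/2}B_2 + r^{1/(1+\beta)}B_{1+\beta}$ for a level $g(r)$ with $\min\{g(r)^2,g(r)^{1+\beta}\}\gtrsim r$, i.e.\ $g(r)\asymp r^{1/(1+\beta)}$ for large $r$. Properties (a)--(b) force $f(x)\le \|x-a\|_2$ for all $a\in A$, hence $A+g(r)B_2\subseteq\{f\le g(r)\}$; but $A+r^{1/(1+\beta)}B_2$ is \emph{not} contained in $A+r^{1/2}B_2+r^{1/(1+\beta)}B_{1+\beta}$ in a dimension-free way (take $A=\{0\}$ and $x=r^{1/(1+\beta)}(1,\dots,1)/\sqrt{n}$: any decomposition $x=v+w$ with $\|v\|_2\le r^{1/2}$ has $\|w\|_{1+\beta}\ge n^{1/(1+\beta)-1/2}(r^{1/(1+\beta)}-r^{1/2})\gg r^{1/(1+\beta)}$ for $n$ large and $\beta<1$). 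So either you keep unit Lipschitz constants, in which case the admissible level is only $g(r)\asymp r^{1/2}$ and the exponent you get is $\min\{r,r^{(1+\beta)/2}\}$, strictly weaker than $r$; or you take the correct two-level cost, e.g.\ $f(x)=\inf_{a\in A}\sum_i \Lb(x_i-a_i)$ (whose sublevel set at $\asymp r$ does sit inside the enlargement, via a coordinate-wise splitting), but then $f$ is not globally Lipschitz at all—its gradient grows with the distance—so the differential inequality $\lambda H'(\lambda)-H\log H\le C(\lambda^2\lor\lambda^{(\beta+1)/\beta})H(\lambda)$ is unavailable, and exponential integrability of $e^{\lambda f}$ (your proposed fix) does not repair this, since the issue is the unboundedness of $\nabla f$ on the right-hand side of \eqref{eq:mod-lsi-beta}, not finiteness of the Laplace transform. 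This is exactly the obstruction the authors allude to when they say one cannot truncate.

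The paper's proof resolves it by making the test function depend on $r$: it takes $\Phi(x)=\inf_{a\in A}\sum_{i}\Lb(x_i-a_i)$, observes that on the sublevel set $\widetilde A=\{\Phi<3r/16\}$ one has $\|\nabla\Phi\|_2^2\lesssim r$ and $\|\nabla\Phi\|_{(\beta+1)/\beta}^{1+\beta}\lesssim r^{\beta}$ (via $|(\Lb)'(t)|\le |t|\land|t|^{\beta}$), and then replaces $\Phi$ by $\vphi(x)=\sup_{y\in D}\{\Phi(y)+\langle\nabla\Phi(y),x-y\rangle\}$ with $D$ a dense set of differentiability points of $\widetilde A$; this $\vphi$ is convex, coincides with $\Phi$ on $\cl\widetilde A$, and is \emph{globally} Lipschitz with constants $L_2^2\le 16r/3$, $L_{1+\beta}^{1+\beta}\le 16r^{\beta}/3$, so Corollary~\ref{cor:conc-functions2} at level $t\asymp r$ (together with a Poincar\'e bound showing $\EE\vphi\le r/8$ for large $r$) yields $e^{-C'r}$. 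If you want to stay closer to your scheme, a workable variant is to fix $r$ and use the Minkowski gauge $\vphi_r(x)=\inf\{t\ge0:\ x\in A+t(r^{1/2}B_2+r^{1/(1+\beta)}B_{1+\beta})\}$, which is convex, globally $r^{-1/2}$-Lipschitz in $\ell_2$ and $r^{-1/(1+\beta)}$-Lipschitz in $\ell_{1+\beta}$, with $\{\vphi_r<1\}\subseteq A+r^{1/2}B_2+r^{1/(1+\beta)}B_{1+\beta}$; applying Corollary~\ref{cor:conc-functions2} at level $t\asymp 1$ with these $r$-dependent constants, plus the convex Poincar\'e inequality to bound $\EE\vphi_r$, again gives exponent $\asymp r$. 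In either repair the essential point your proposal misses is that the convex test function and its Lipschitz constants must scale with $r$; no single $1$-Lipschitz distance can encode the two-level geometry dimension-free.
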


The Reader may compare the above corollaries with a general result stated in \cite[Proposition~26, Theorem~27]{barthe-roberto}  about consequences of inequalities similar to \eqref{eq:mod-lsi-beta}
in the classical setting of smooth functions (see also \cite[Example~29]{barthe-roberto}).

\subsection{Relation to other results, and further questions}

Recently, the classical log-Sobolev inequality for convex functions has been shown to be equivalent to certain transport-entropy inequalities with weak transport cost \cite{gozlan}. In Propositions \ref{prop:transport} and \ref{prop:transport-zero} we show that this is also the case for inequalities \eqref{eq:mod-lsi-beta} and \eqref{eq:mod-lsi-zero}. Since the precise formulation of these results requires introducing additional, rather involved notation, we postpone it to Section~\ref{sec:transport}.

Logarithmic Sobolev inequalities for convex functions have one weakness when compared to their counterparts for smooth functions, namely they provide only deviation inequalities for the upper tail of the function (in the classical setting one obtains bounds on the lower tail by applying the inequality to $-\varphi$, an approach which is precluded in our situation, since except for trivial cases $-\varphi$ is not convex).

It turns out that for $\beta = 0$ the condition $\mu \in \mbeta$ is in fact equivalent to two-sided concentration inequality for convex functions. This is a consequence of a recent result by Feldheim, Marsiglietti, Nayar, and Wang \cite{Feldheim} who proved an infimum convolution inequality, which as follows from our Proposition \ref{prop:transport-zero} is stronger than \eqref{eq:mod-lsi-zero}.

\subsubsection{A remark about recent improvements by Gozlan et al.}
During the review process of this manuscript a preprint \cite{gozlan_new} by Gozlan, Roberto, Samson, Shu, and Tetali appeared, in which the Authors extended the results by Feldheim et al. and characterized the infimum-convolution inequality (equivalently weak transportation inequality) for probability measures on the real line and a large class of cost functions. Their condition can be shown to be strictly weaker than ours, while (as follows from Proposition \ref{prop:transport}) the inequalities they consider are stronger than modified log-Sobolev inequalities investigated by us. The condition of \cite{gozlan_new} is expressed in terms of the monotone transport map $U$ between the symmetric exponential distribution and the measure $\mu$ and in the case we consider reads as
\begin{displaymath}
\sup_{x\in \RR} (U(x+u) - U(x)) \le C(1+u)^{1/(\beta+1)}
\end{displaymath}
for some $C < \infty$.
Using the same arguments as in Lemma \ref{lem:ra2} below, one can show that this is implied by $\mu \in \mathcal{M}_\beta(m,\sigma^{\beta+1})$ for some $m,\sigma^{\beta+1}$. On the other hand, for measures supported on the positive half-line, the latter condition is equivalent to
\begin{displaymath}
\sup_{x\in \RR} (U(x+u)^{\beta+1} - U(x)^{\beta+1}) \le D(1+u)
\end{displaymath}
for some $D < \infty$ and so it is not difficult to find a measure which satisfies the condition of \cite{gozlan_new} but does not belong to $\mathcal{M}_\beta(m,\sigma^{\beta+1})$ for any $m,\sigma^{\beta+1}$.

To the best of our knowledge, except for the case of $\beta = 0$, the characterization of measures on the line satisfying the modified log-Sobolev inequality for convex functions is still unknown. For $\beta= 0$ these are exactly the measures $\mu$, which belong to the class $\mzero$ for some $m$ and $\sigma$ (this follows e.g. from Theorem \ref{thm:main} and the characterization of the convex Poincar\'e inequality due to Bobkov and G\"otze \cite{bobkov-goetze}).
We provide a more detailed discussion of the results of \cite{Feldheim,gozlan_new} and their relation to our results in Section~\ref{sec:transport} (see Remark \ref{rem:Feldheim}).

\subsection{Organization of the paper} Section~\ref{sec:proofs-of-main-results} is devoted to the proofs of our main results presented above. In Section~\ref{sec:transport} we discuss connections with transportation inequalities and in Section~\ref{sec:remark-non-convex} we briefly comment on the relation between the class $\mbeta$ and the class of functions satisfying the modified log-Sobolev inequality for all smooth functions.

\subsection{Acknowledgments} We would like to thank Ivan Gentil and Arnaud Guillin who back in 2006 during a trimester Phenomena in High Dimensions held at the IHP in Paris introduced the first-named author to their theory of modified log-Sobolev inequalities and raised a question concerning their convex counterparts. Some ideas used in the proof of the $\beta = 0$ case of our results originated in those conversations. We also thank Piotr Nayar for communicating to us the recent results of \cite{Feldheim}. Finally, we thank the Anonymous Referee for a very careful reading of the first version of this  article, for pointing out that it is possible to obtain hypercontractive estimates in~Section~\ref{sec:transport}, and for many helpful comments.

\section{Proofs of the main results}\label{sec:proofs-of-main-results}

\subsection{Technical lemmas}
In this section we will state and prove several technical lemmas which will be used in the proofs of our main results. Of particular importance is Lemma \ref{lem:ra3}, which constitues the core of the proof of Theorem \ref{thm:main}.

In the proofs we will use the notation $C(\alpha)$ to denote constants depending only on some parameter $\alpha$. The value of such constants may change between occurrences.

Let us start with the proof of Proposition \ref{prop:equiv-class-mbeta} stated in Section \ref{subsec:main-results}.

\begin{proof}[Proof of Proposition \ref{prop:equiv-class-mbeta}]
Assume \itemi{} holds. For $x\geq m$ we have
\begin{align*}
\sigma^{\beta+1}\mu([x,\infty)) & \geq \nu^+([x,\infty))\geq \int_x^{x+2\sigma^{\beta+1}/x^{\beta}} y^{\beta}\mu([y,\infty)) dy \\
& \geq \frac{2\sigma^{\beta+1}}{x^{\beta}}\cdot x^{\beta} \mu([x+2\sigma^{\beta+1}/x^{\beta},
\infty)),
\end{align*}
which clearly implies the first inequality of \itemii{}. The second inequality follows similarly.

Suppose now that \itemii{} is satisfied. For $x\geq m$ define the sequence $a_0=x$,
$a_{n+1}=a_n + h/a_n^{\beta}$. It is easy to see that this sequence is increasing,
$a_n\to\infty$ and	 $a_{n+1}/a_n\to 1$. Therefore
\begin{align*}
\nu^+([x,\infty)) &= \int_x^{\infty} y^{\beta} \mu([y,\infty)) dy \leq \sum_{n=0}^{\infty} (a_{n+1} - a_n) a_{n+1}^{\beta}\mu([a_n,\infty)) \\ &\leq K\sum_{n=0}^{\infty} \alpha^n\mu([a_0,\infty)) = \frac{K}{1-\alpha}\mu([x,\infty)),
\end{align*}
where $K = \sup{} (a_{n+1} - a_n) a_{n+1}^{\beta} = h\sup{} (a_{n+1}/a_n)^{\beta} = h\sup{} (1+h/a_n^{\beta+1})^{\beta} \leq h(1+h/m^{\beta+1})^{\beta}$. We can proceed analogously to obtain
the condition on the left tail.
\end{proof}

\begin{lemma}\label{lem:ra1}
Let $\mu\in\mbeta$. Then for all functions $f:\RR\to[0,\infty)$ which are nonincreasing for $x\leq x_0$ and nondecreasing for $x\geq x_0$ (for some $x_0\in[-\infty,\infty]$) we have
\begin{equation*}
\int_{\mtilde}^{\infty}f(x)x^{\beta}\mu([x,\infty))dx\leq \sigma^{\beta+1}\int_{\RR}
f(x)d\mu(x),
\end{equation*}
where $\mtilde = m\lor ((2\beta)^{1/(\beta+1)}\sigma) +
2\sigma^{\beta+1}/(m\lor((2\beta)^{1/(\beta+1)}\sigma))^{\beta}$.
\end{lemma}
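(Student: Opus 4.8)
The plan is to reduce the stated integral inequality to the defining property of the class $\mbeta$ by integrating the density of $\nu^+$ against $f$. Recall that $\nu^+$ is the measure on $[m,\infty)$ with density $x^{\beta}\mu([x,\infty))$, so the left-hand side is essentially $\int_{\mtilde}^\infty f\,d\nu^+$. The key tool is the layer-cake / integration-by-parts representation of $\nu^+([x,\infty))$ together with the bound $\nu^+([x,\infty))\le\sigma^{\beta+1}\mu([x,\infty))$ valid for $x\ge m$. The role of the shifted threshold $\mtilde$ is precisely to ensure that when we split $f$ into its nonincreasing and nondecreasing parts around $x_0$ and express each as an integral of its (signed) increments, the points at which we need the tail comparison all lie in $[m,\infty)$.

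Concretely, first I would treat the nondecreasing part: for $x\ge x_0\lor\mtilde$ write $f(x)=f(x_0\lor\mtilde)+\int_{x_0\lor\mtilde}^x f'(t)\,dt$ with $f'\ge0$ (working with monotone functions and Stieltjes integrals to avoid smoothness assumptions), plug this into $\int f\,d\nu^+$, and swap the order of integration via Fubini. This produces terms of the form $f(\text{threshold})\cdot\nu^+([\text{threshold},\infty))$ and $\int f'(t)\,\nu^+([t,\infty))\,dt$; on each we apply $\nu^+([x,\infty))\le\sigma^{\beta+1}\mu([x,\infty))$ and then reverse Fubini to recover $\sigma^{\beta+1}\int f\,d\mu$ over the relevant half-line. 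The nonincreasing part on $[\mtilde, x_0]$ (if $x_0>\mtilde$) is handled the same way, writing $f(x)=f(x_0)+\int_x^{x_0}(-f'(t))\,dt$ with $-f'\ge0$. Finally one must check $\mtilde\ge m$ so the tail comparison is legitimate at the base point; this is where the somewhat ugly formula for $\mtilde$ enters, and one also needs $\mtilde \ge (2\beta)^{1/(\beta+1)}\sigma$ to control an error term coming from the discrepancy between $x^\beta\mu([x,\infty))$ and the increments of $\nu^+([x,\infty))=\int_x^\infty y^\beta\mu([y,\infty))\,dy$ near the threshold — roughly, one needs $2\sigma^{\beta+1}/\mtilde^{\beta}$ to be comparable to $\mtilde$ so that a single extra layer of width $\sim\sigma^{\beta+1}/\mtilde^\beta$ starting at $\mtilde$ still sits above $m$.

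The main obstacle I expect is bookkeeping at the threshold $\mtilde$ rather than any deep idea: one has to be careful that after the Fubini manipulation the boundary term at $\mtilde$ is dominated by (a constant times) $\sigma^{\beta+1}\int f\,d\mu$ and not just by something involving $\nu^+$ on a region where the class-$\mbeta$ bound is unavailable. A clean way to organize this is to first establish the inequality $\int_{\mtilde}^\infty g(x)\,d\nu^+(x)\le\sigma^{\beta+1}\int_{\mtilde}^\infty g(x)\,d\mu(x)$ for $g$ a nonincreasing nonnegative function by a direct layer-cake argument (integrate $g(x)=\int_0^\infty\indbr{g(x)>s}\,ds$, use that $\{g>s\}$ is a half-line $[-\infty,b_s)$, and compare $\nu^+([\mtilde,b_s))$ with $\sigma^{\beta+1}\mu([\mtilde,b_s))$ using monotonicity of $x\mapsto\nu^+([x,\infty))/\mu([x,\infty))$-type estimates), and likewise for nondecreasing $g$ on $[\mtilde,\infty)$, and then decompose the general unimodal-from-below $f$ as a sum of a nonincreasing piece on $[\mtilde,x_0]$ and a nondecreasing piece on $[x_0,\infty)$. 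Once these two one-sided statements are in hand, the assembly is routine, and the only remaining task is the elementary verification that the given $\mtilde$ satisfies the two lower bounds ($\mtilde\ge m$ and $\mtilde\ge(2\beta)^{1/(\beta+1)}\sigma$) that make all the tail comparisons applicable.
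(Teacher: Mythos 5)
Your first route (write the monotone pieces as integrals of their increments and Fubini) can be pushed through for the \emph{nondecreasing} piece, because there all the sets on which you need to compare $\nu^+$ and $\mu$ are half-lines $[t,\infty)$ with $t\ge\mtilde\ge m$, exactly where Definition~\ref{def:class-mbeta} applies. The genuine gap is in your treatment of the nonincreasing piece, and in particular in the ``clean'' reorganization you propose: the inequality $\int_{\mtilde}^\infty g\,d\nu^+\le\sigma^{\beta+1}\int_{\mtilde}^\infty g\,d\mu$ for nonincreasing nonnegative $g$ is false in general. Its layer-cake form requires the \emph{interval} comparison $\nu^+([\mtilde,b))\le\sigma^{\beta+1}\mu([\mtilde,b))$, which is not a consequence of $\mu\in\mbeta$: the class only compares tails. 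For instance, a measure in $\mbeta$ may have a short gap in its support just above $\mtilde$ (gaps of length at most $2\sigma^{\beta+1}/x^\beta$ are compatible with condition \itemii{} of Proposition~\ref{prop:equiv-class-mbeta}); then for small $\eps$ one has $\mu([\mtilde,\mtilde+\eps))=0$ while $\nu^+([\mtilde,\mtilde+\eps))=\int_{\mtilde}^{\mtilde+\eps}x^\beta\mu([x,\infty))dx>0$ as soon as the tail beyond the gap is positive. There is no monotonicity of a ratio of the type $\nu^+([x,\infty))/\mu([x,\infty))$ available to rescue this, and taking $g=\indbr{x<\mtilde+\eps}$ shows that the one-sided statement with both integrals restricted to $[\mtilde,\infty)$ simply cannot hold.

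The missing idea is that the $\mu$-mass compensating the interval part must be collected \emph{below} $\mtilde$, which is why the right-hand side of the lemma is $\int_{\RR}f\,d\mu$ and why $\mtilde$ has its specific form. In the paper's argument, when the superlevel set $\{x\ge\mtilde: f(x)\ge s\}$ contains an interval with left end $\mtilde$ and right end $t\le x_0$, one bounds crudely $\nu^+$ of that piece by $\nu^+([\mtilde,\infty))\le\sigma^{\beta+1}\mu([\mtilde,\infty))$, and then uses Proposition~\ref{prop:equiv-class-mbeta} (with $h=2\sigma^{\beta+1}$, $\alpha=1/2$) together with $\mtilde=a+2\sigma^{\beta+1}/a^{\beta}$, $a=m\lor((2\beta)^{1/(\beta+1)}\sigma)\ge m$, to get $\mu([\mtilde,\infty))\le\mu([a,\mtilde))\le\mu([a,t))$; finally, since $t\le x_0$ and $f$ is nonincreasing on $(-\infty,x_0]$, one has $f\ge s$ on $[a,t)$, so this mass is indeed counted in $\mu(\{f\ge s\})$ and hence in $\int_{\RR}f\,d\mu$. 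So the role of $\mtilde$ is not the ``extra layer above $m$'' you describe, but rather that stepping \emph{down} from $\mtilde$ by $2\sigma^{\beta+1}/a^{\beta}$ lands at $a\ge m$, where the tail-halving property guarantees $\mu([a,\mtilde))\ge\mu([\mtilde,\infty))$. Your increment/Fubini version of the nonincreasing part can be repaired along exactly these lines (bounding $\nu^+([\mtilde,t))$ by $\sigma^{\beta+1}\mu([a,t))$ and integrating the increments against $\mu$ on $[a,x_0)$), but as written the proposal lacks this step and relies on an intermediate inequality that fails.
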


\begin{proof}
First notice that the inequalities of
Definition~\ref{def:class-mbeta} are also satisfied for sets of the form $(x,\infty)$, $x\geq m$. We have
\begin{align}
\int_{\mtilde}^{\infty}f(x)x^{\beta}\mu([x,\infty))dx &=
\int_{\mtilde}^{\infty}\int_0^{\infty} \ind{\{s\leq f(x)\}}x^{\beta}\mu([x,\infty))ds
dx\nonumber\\
&=\int_0^{\infty} \nu^+(\{x\geq\mtilde : f(x)\geq s \}) ds.\label{eq:lem-ra1}
\end{align}
The set $A=\{x\geq\mtilde : f(x)\geq s\}$ is either a (possibly empty) half-line contained in
$[\mtilde,\infty]$ or a disjoint union of such a half-line and an interval $I$ with the left end equal to $\mtilde$. In the former case we have $\nu^+(A)\leq \sigma^{\beta+1} \mu(A) \leq \sigma^{\beta+1} \mu(\{x\in\RR : f(x)\geq s\})$.

In the latter case we denote the right end of $I$ by $t$. Let $a = m\lor
((2\beta)^{1/(\beta+1)}\sigma)$, so that $\mtilde = a +2\sigma^{\beta+1}/a^{\beta}$. Since $t\geq\mtilde\geq a$ we obtain
\begin{multline*}
\nu^+ (A) \leq \nu^+([\mtilde,\infty)) \leq \sigma^{\beta+1} \mu([\mtilde,\infty))
\leq  \sigma^{\beta+1} \mu([a,\mtilde)) \\
\leq \sigma^{\beta+1} \mu([a,t)) \leq \sigma^{\beta+1} \mu(\{ x\leq t : f(x)\geq s \}),
\end{multline*}
where the second inequality follows from the assumption $\mu\in\mbeta$, the third from Proposition~\ref{prop:equiv-class-mbeta} and the definition of $\mtilde$
(which imply that $\mu([a,\mtilde)) = \mu([a,\infty)) - \mu([\mtilde,\infty)) \geq \mu([\mtilde, \infty)) $), and the last one from the observation that $x_0\geq t$ and thus the function $f$ is nonincreasing for $x\leq t$.

Hence, in both cases we have $\nu^+(A) \leq \sigma^{\beta+1} \mu(\{ x\in\RR : f(x)\geq s \})$. Now we can write
\begin{equation*}
\sigma^{\beta+1} \int_0^{\infty} \mu(\{x\in\RR : f(x)\geq s \})ds = \sigma^{\beta+1}\int_{\RR} f(x)d\mu(x),
\end{equation*}
which together with \eqref{eq:lem-ra1} ends the proof.
\end{proof}

\begin{lemma}\label{lem:ra2}
If $X$ is a random variable such that $\law{X}\in\mbeta$ then $\PP(|X|\geq t)\leq C_1(\beta,m,\sigma) e^{-t^{\beta+1} /C_2(\beta,m,\sigma)}$ for all $t\geq 0$. \end{lemma}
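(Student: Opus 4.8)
The plan is to derive the subexponential-in-$t^{\beta+1}$ tail bound from the defining inequality of the class $\mbeta$ by a differential-inequality / Gronwall-type argument applied to the tail function $F(x) = \mu([x,\infty))$ (and symmetrically to the left tail). First I would observe that the measure $\nu^+$ has density $x^\beta F(x)$ on $[m,\infty)$, so the defining inequality $\nu^+([x,\infty)) \le \sigma^{\beta+1} F(x)$ can be rewritten as
\begin{displaymath}
\int_x^\infty y^\beta F(y)\,dy \le \sigma^{\beta+1} F(x), \qquad x \ge m.
\end{displaymath}
Set $G(x) = \int_x^\infty y^\beta F(y)\,dy$; then $G' (x) = -x^\beta F(x)$, and the inequality reads $G(x) \le \sigma^{\beta+1} F(x) = -\sigma^{\beta+1} G'(x)/x^\beta$, i.e. $G'(x)/G(x) \le -x^\beta/\sigma^{\beta+1}$ wherever $G(x) > 0$. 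Integrating from $m$ to $x$ gives $G(x) \le G(m)\exp\bigl(-(x^{\beta+1}-m^{\beta+1})/((\beta+1)\sigma^{\beta+1})\bigr)$, and since $G(m) \le \sigma^{\beta+1}F(m) \le \sigma^{\beta+1}$ this yields exponential decay of $G$ in $x^{\beta+1}$.

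The second step is to pass from decay of $G$ back to decay of $F$. Because $F$ is nonincreasing, for $x \ge m$ we have
\begin{displaymath}
G(x) = \int_x^\infty y^\beta F(y)\,dy \ge F(2x)\int_x^{2x} y^\beta\,dy = F(2x)\cdot \frac{(2x)^{\beta+1}-x^{\beta+1}}{\beta+1} = c_\beta\, x^{\beta+1} F(2x),
\end{displaymath}
with $c_\beta = (2^{\beta+1}-1)/(\beta+1) > 0$. Hence $F(2x) \le G(x)/(c_\beta x^{\beta+1})$, and combining with the bound on $G(x)$ gives $F(2x) \le C_1(\beta,m,\sigma)\exp(-x^{\beta+1}/C_2(\beta,m,\sigma))$ for $x \ge m$; after the substitution $t = 2x$ and absorbing the resulting constants this becomes $\PP(X \ge t) \le C_1 e^{-t^{\beta+1}/C_2}$ for $t \ge 2m$, and for $t < 2m$ the bound is trivial by enlarging $C_1$. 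The left tail is handled identically using $\nu^-$ and the second inequality in Definition~\ref{def:class-mbeta}, and then $\PP(|X|\ge t) \le \PP(X\ge t) + \PP(X \le -t)$ gives the claim with adjusted constants.

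The main technical point to be careful about is the Gronwall step: one must argue cleanly on the set where $G$ is positive (if $G$ vanishes at some point it vanishes thereafter by monotonicity of the integrand's sign, so $F$ has bounded support and the bound is trivial), and one should note that $F$ need not be absolutely continuous — but $G$ is Lipschitz on compacts, hence absolutely continuous, so the integration of $G'/G$ is legitimate. The case $\beta = 0$ is included (it just gives ordinary exponential decay, consistent with the convex Poincaré picture mentioned earlier), and no convexity or smoothness is needed here — only the tail regularity encoded in $\mu \in \mbeta$. Everything else is routine bookkeeping of constants depending on $\beta$, $m$, $\sigma$.
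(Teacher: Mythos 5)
Your argument is correct and is essentially the paper's proof: the paper likewise rewrites the defining condition of $\mbeta$ as a Gronwall-type differential inequality for an integrated tail, integrates it to get decay like $e^{-x^{\beta+1}/((\beta+1)\sigma^{\beta+1})}$, and then uses monotonicity of the tail to come back down to $\PP(X\geq t)$, treating the left tail symmetrically. The only difference is technical: the paper integrates one more time (working with $f(z)=\int_z^{\infty}x^{\beta}g(x)\,dx$, which is $C^1$ since $g$ is continuous), thereby avoiding the a.e.-differentiability/absolute-continuity justification you give for $G$, and accordingly undoes two doubling steps (bounding $\PP(X\geq 4x)$) instead of your one.
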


\begin{proof}
Obviously it is sufficient to prove the inequality for $t\geq 4m$. Define continuous functions $g(x)=
\int_x^{\infty}y^{\beta}\PP(X\geq y)dy$ and $f(z)=\int_z^{\infty} x^{\beta}g(x)dx$. By the definition of $\mbeta$, for $x\geq m$ we have
\begin{equation*}
g(x)\leq \sigma^{\beta+1}\PP(X\geq x).
\end{equation*}
 Thus, for $z\geq m$,
\begin{equation*}
f(z) = \int_z^{\infty} x^{\beta}g(x)dx\leq \int_z^{\infty} x^{\beta} \sigma^{\beta+1}\PP(X\geq x) dx = \sigma^{\beta+1}g(z).
\end{equation*}
We can rewrite this as
\begin{equation*}
f'(z) \leq -\frac{1}{\sigma^{\beta+1}}z^{\beta}f(z), \quad z\geq m,
\end{equation*}
which gives $f(z)\leq C\exp(-z^{\beta+1}/((\beta+1)\sigma^{\beta+1}))$ with $C$ depending only on $\beta$, $m$ and $\sigma^{\beta+1}$ (note that $f(m)\leq \sigma^{2\beta+2}$). Now, as $g$ is nonincreasing, we have $f(z)\geq \int_z^{2z}y^{\beta}g(y)dy \geq z^{\beta+1}g(2z)$ and hence for $z\geq m$ we get
\begin{equation*}
g(2z)\leq\frac{f(z)}{z^{\beta+1}}  \leq  \frac{C\exp(-z^{\beta+1}/((\beta+1)\sigma^{\beta+1}))}{z^{\beta+1}}.
\end{equation*}
Similarly
\begin{equation*}
\PP(X\geq 4x) \leq \frac{g(2x)}{(2x)^{\beta+1}} \leq \frac{C \exp(-x^{\beta+1}/((\beta+1)\sigma^{\beta+1})) }{2^{\beta+1}x^{2\beta+2}}
\end{equation*}
for $x\geq m$.

We can analogously deal with the lower tail.
\end{proof}

The next two lemmas contain the core of our argument and are counterparts of Lemmas 4 and 3 in \cite{MR2163396}. While Lemma \ref{lem:ra4} follows quite closely the argument from \cite{MR2163396}, the subsequent Lemma \ref{lem:ra3} requires for $\beta < 1$ a significantly new approach.

\begin{lemma}\label{lem:ra4}
Let $\vphi\colon\RR\to\RR$ be a smooth convex Lipschitz function, nonincreasing on $(-\infty,0)$, and $X$ a random variable with $\law{X}\in\mbeta$. Then there exists a constant $C_3(\beta, m, \sigma)$ such that
\begin{align*}
\int_{\{(x,y)\in\RR^2 : xy\leq 0)\}} \vphi'(x)\vphi'(y)e^{\vphi(y)}\PP(X\leq x\land y)&\PP(X\geq x\lor y) dxdy \\
& \leq C_3(\beta, m, \sigma) \EE \vphi'(X)^2 e^{\vphi(X)}.
\end{align*}
\end{lemma}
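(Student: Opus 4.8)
The plan is to reduce the two-dimensional integral to a one-dimensional one by integrating out one of the variables, and then apply Lemma~\ref{lem:ra1} with a suitable unimodal function. First, by symmetry of the integration region $\{xy\le 0\}$ and since $\varphi$ is nonincreasing on $(-\infty,0)$, the relevant contribution comes from, say, $y\le 0\le x$ (the other half being handled the same way, up to a factor of $2$); on this region $\varphi'(y)\le 0$ and $\varphi'(x)\ge 0$, so $\varphi'(x)\varphi'(y)\le 0$ — wait, that has the wrong sign, so in fact the integrand is nonpositive there and I should instead be more careful: the integral as written could be negative, and the claim is really an upper bound, so what needs attention is the sign bookkeeping. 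The cleaner route, following \cite{MR2163396}, is to bound the absolute value: $|\varphi'(x)\varphi'(y)|\,e^{\varphi(y)}\,\PP(X\le x\wedge y)\PP(X\ge x\vee y)$, and use that on $\{y\le 0\le x\}$ we have $x\wedge y=y$ and $x\vee y=x$, so the kernel factorizes as $|\varphi'(y)|e^{\varphi(y)}\PP(X\le y)\cdot |\varphi'(x)|\PP(X\ge x)$.

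Next I would integrate in $x$ over $[0,\infty)$. Here the key inequality is $\int_0^\infty |\varphi'(x)|\PP(X\ge x)\,dx \le C(\beta,m,\sigma)\,\EE\,|\varphi'(X)|\,e^{\varphi(X)}$ type bound, or more precisely one wants to produce $\EE\,\varphi'(X)^2 e^{\varphi(X)}$ on the right. The standard trick is: since $\varphi$ is convex and Lipschitz, $\varphi'$ is nondecreasing and bounded, so $\int_0^\infty \varphi'(x)\PP(X\ge x)\,dx$ can be integrated by parts or handled via $\PP(X\ge x)\le e^{-x^{\beta+1}/C}$ from Lemma~\ref{lem:ra2} combined with the tail behaviour of $e^{\varphi}$. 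Then the remaining integral $\int_{-\infty}^0 |\varphi'(y)|e^{\varphi(y)}\PP(X\le y)\,dy$ is exactly of the form handled by Lemma~\ref{lem:ra1} applied to the left tail: the function $y\mapsto |\varphi'(y)|e^{\varphi(y)}$ (suitably truncated near $\tilde m$) is nonincreasing on $(-\infty,0)$ because $\varphi$ is nonincreasing there and $|\varphi'|$ is too (convexity), so the product is a product of two nonincreasing nonnegative functions, hence nonincreasing; Lemma~\ref{lem:ra1} then converts $\int |y|^{\beta}\PP(X\le y)\,(\cdot)\,dy$ into $\sigma^{\beta+1}\EE(\cdot)$. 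Getting the power $|\varphi'(y)|^2$ rather than $|\varphi'(y)|$ requires combining the two one-dimensional estimates so that one factor of $|\varphi'|$ comes from the $x$-integration and one from the $y$-integration, landing on $\EE\,\varphi'(X)^2 e^{\varphi(X)}$.

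The main obstacle I anticipate is the careful handling of the region near the origin and near $\tilde m$, where the monotonicity hypotheses of Lemma~\ref{lem:ra1} and the factorization of $x\wedge y$, $x\vee y$ are used: one must check that the contributions from $|x|,|y|\le \tilde m$ (where Lemma~\ref{lem:ra1} gives nothing) are controlled separately, using boundedness of $\varphi'$ (Lipschitz) and the fact that $e^{\varphi}$ is integrable there, so those pieces are bounded by a constant times $\sup|\varphi'|\cdot\EE\,|\varphi'(X)|e^{\varphi(X)}$, which in turn is at most $C\,\EE\,\varphi'(X)^2 e^{\varphi(X)}$ once one argues $\sup|\varphi'|$ can be absorbed (this is where one uses that $\varphi'$ is monotone and the tails of $X$ are light enough — Lemma~\ref{lem:ra2} — so that $\varphi'(X)$ is comparable in the relevant range; alternatively one reduces first to the case where $\varphi$ is modified to have controlled derivative, as is done in \cite{MR2163396}). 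The rest is bookkeeping of constants, which depend only on $\beta$, $m$, $\sigma$ through Lemmas~\ref{lem:ra1} and~\ref{lem:ra2}.
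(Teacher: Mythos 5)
There is a genuine gap, and it sits exactly at the step you wave at with ``combining the two one-dimensional estimates so that one factor of $|\vphi'|$ comes from the $x$-integration and one from the $y$-integration.'' After you replace the integrand by its absolute value, the kernel on $\{y\le 0\le x\}$ factorizes as $|\vphi'(y)|e^{\vphi(y)}\PP(X\le y)\cdot|\vphi'(x)|\PP(X\ge x)$, so your bound is a \emph{product} of two separate one-dimensional integrals, with the two factors of $|\vphi'|$ evaluated at different points ($x>0$ and $y<0$). You never explain how this product is to be turned into $\EE\,\vphi'(X)^2e^{\vphi(X)}$, and the mechanism you do propose fails: bounding the $x$-integral by $\sup|\vphi'|\cdot\EE X_+$ and then ``absorbing'' $\sup|\vphi'|$ would require an inequality of the form $\sup|\vphi'|\cdot\EE|\vphi'(X)|e^{\vphi(X)}\le C(\beta,m,\sigma)\,\EE\vphi'(X)^2e^{\vphi(X)}$, which is false. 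For instance, take $\vphi(t)=\eps|t|$ for $|t|\le K$ and $\vphi(t)=\eps K+L(|t|-K)$ for $|t|>K$: with $L\gg\eps$ and $K$ so large that $L\,\EE e^{L(|X|-K)}\indbr{|X|>K}\ll\eps$ (possible by Lemma~\ref{lem:ra2}), the left side is of order $L\eps$ while the right side is of order $\eps^2$. So no constant depending only on $\beta,m,\sigma$ can absorb the Lipschitz constant, and the ``comparability of $\vphi'(X)$ in the relevant range'' you invoke is not available.

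The sign structure you noticed and then discarded is precisely what the paper exploits, and it is what makes the square appear at a single point. On $\{x<0<y\}$ (and symmetrically on $\{x>0>y\}$) the signed integrand is nonpositive unless both derivatives are negative, and in that case convexity gives $|\vphi'(y)|\le|\vphi'(x)|$ together with $e^{\vphi(y)}\le e^{\vphi(x)}$, so pointwise $\vphi'(x)\vphi'(y)e^{\vphi(y)}\le\vphi'(x)^2e^{\vphi(x)}$: both derivative factors and the exponential get attached to the \emph{negative} variable. One then integrates out the positive variable using $\int_0^\infty\PP(X\ge y)\,dy=\EE X_+\le C(\beta,m,\sigma)$ (Lemma~\ref{lem:ra2}) and is left with $\int_{-\infty}^0\vphi'(x)^2e^{\vphi(x)}\PP(X\le x)\,dx$, which is handled as you correctly anticipate: Lemma~\ref{lem:ra1} on $(-\infty,-\mtilde)$ (after inserting $|x|^\beta\ge1$, using $\mtilde\ge1$) and the monotonicity of $\vphi'^2e^{\vphi}$ on $(-\infty,0)$ to bound the piece over $(-\mtilde,0)$ by $\mtilde\,\EE\vphi'(X)^2e^{\vphi(X)}$. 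Your treatment of the one-dimensional $y$-integral is fine, but without the sign/convexity transfer the two-dimensional reduction is not established, so the proof as proposed does not go through.
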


\begin{proof}
For $\beta > 0 $ we will assume without loss of generality that $m\geq 1$. Let $\mtilde$ be the constant defined in Lemma~\ref{lem:ra1} (note that $\mtilde>1$). For $x<0<y$ we have either $\vphi'(x)\vphi'(y)\leq 0$ or $\vphi'(x)\leq\vphi'(y)<0$ and $\vphi(x)\geq\vphi(y)$, so
\begin{align*}
\MoveEqLeft[5] \int_{-\infty}^0\int_0^{\infty} \vphi'(x)\vphi'(y)e^{\vphi(y)}\PP(X\leq x)\PP(X\geq y) dydx\\
& \leq
\int_{-\infty}^0\int_0^{\infty} \vphi'(x)^2 e^{\vphi(x)}\PP(X\leq x)\PP(X\geq y) dydx\\
& \leq
C(\beta, m, \sigma) \int_{-\infty}^0 \vphi'(x)^2 e^{\vphi(x)}\PP(X\leq x)dx,
\end{align*}
where the last inequality follows from the fact that by Lemma~\ref{lem:ra2}
\begin{equation*}
\int_0^{\infty} \PP(X\geq y) dy = \EE X_{+} \leq C(\beta, m, \sigma).
\end{equation*}
The integral over the set where $x>0>y$ can be treated similarly, namely we have
\begin{align*}
\int_0^{\infty} \int_{-\infty}^0&\vphi'(x)\vphi'(y)e^{\vphi(y)}\PP(X\leq y)\PP(X\geq x) dydx\\
& \leq
\int_0^{\infty}\int_{-\infty}^0 \vphi'(y)^2 e^{\vphi(y)}\PP(X\leq y)\PP(X\geq x) dydx\\
& \leq
C(\beta, m, \sigma) \int_{-\infty}^0 \vphi'(y)^2 e^{\vphi(y)}\PP(X\leq y)dy.
\end{align*}

Now, by Lemma~\ref{lem:ra1} (note that $-X \in \mbeta$ and the function $x\mapsto \vphi'(-x)^2e^{\vphi(-x)}$ is nonincreasing for $x\leq x_0$ and nondecreasing for $x\geq x_0$ for some $x_0\in[-\infty, 0]$),
\begin{align*}
\int_{-\infty}^{-\mtilde} \vphi'(x)^2 e^{\vphi(x)}\PP(X\leq x)dx&\leq \int_{-\infty}^{-\mtilde} \vphi'(x)^2 e^{\vphi(x)}(-x)^{\beta}\PP(X\leq x)dx\\
&\leq \sigma^{\beta+1}\EE \vphi'(X)^2e^{\vphi(X)}.
\end{align*}
Moreover
\begin{equation*}
\int_{-\mtilde}^{0} \vphi'(x)^2 e^{\vphi(x)}\PP(X\leq x)dx= \EE \int_{-\mtilde}^{0} \vphi'(x)^2 e^{\vphi(x)}\ind{\{X\leq x\}}dx\leq \mtilde\EE \vphi'(X)^2e^{\vphi(X)},
\end{equation*}
where the last inequality follows from the fact that $\vphi'(x)^2e^{\vphi(x)}\leq\vphi'(X)^2e^{\vphi(X)}$ for $X\leq x\leq 0$. This ends the proof of the Lemma.
\end{proof}

\begin{lemma} \label{lem:ra3}
Let $\vphi\colon\RR\to\RR$ be a smooth convex Lipschitz function and $X$ a random variable with $\law{X} = \mu \in\mbeta$. If $\beta>0$, then there exists a constant $C_4(\beta, m, \sigma)$ such that
\begin{align}\label{eq:lem-ra3}
\begin{split}
\MoveEqLeft[10]
 \int_0^{\infty}\int_0^{\infty} \vphi'(x)\vphi'(y)e^{\vphi(y)}\PP(X\leq x\land y)\PP(X\geq x\lor y) dxdy \\
& \leq C_4(\beta, m, \sigma) \EE (|\vphi'(X)|^2\lor |\vphi'(X)|^{\frac{\beta+1}{\beta}}) e^{\vphi(X)}.
\end{split}
\end{align}
If $\beta = 0$ and  $\vphi' \leq 1/(2(m+3\sigma))$, then there exists a constant $C_4(\beta=0, m, \sigma)$ such that
\begin{align*}
\MoveEqLeft[12]
\int_0^{\infty}\int_0^{\infty} \vphi'(x)\vphi'(y)e^{\vphi(y)}\PP(X\leq x\land y)\PP(X\geq x\lor y) dxdy \\
 &\leq C_4(\beta=0, m, \sigma) \EE |\vphi'(X)|^2 e^{\vphi(X)}.
\end{align*}
\end{lemma}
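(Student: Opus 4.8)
I would treat $\beta\in(0,1)$ in detail; $\beta=0$ follows the same outline, with the a priori bound $\vphi'\le 1/(2(m+3\sigma))$ taking over the role that the tail decay of Lemma~\ref{lem:ra2} plays for $\beta>0$. First I would reduce to a monotone $\vphi$: since $\vphi$ is convex, $\vphi'$ is nondecreasing, and the integrand vanishes to the nonpositive side of a minimizer $x_*$ of $\vphi$ on $[0,\infty)$; writing $(0,\infty)^2$ as four blocks about $x_*$, the two off‑diagonal rectangles contribute $\le 0$ and are discarded, the lower block $(0,x_*)^2$ (where $\vphi$ is nonincreasing on a bounded interval) is handled directly by symmetrization together with the monotonicity of $|\vphi'|$ and $e^{\vphi}$ there, $\EE X_+^2\le C(\beta,m,\sigma)$, and the negative correlation of $X_+^2$ with a decreasing weight — the same symmetrization also disposing of the case in which $\vphi$ is nonincreasing on all of $(0,\infty)$. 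Thus I may assume $\vphi'\ge 0$ is nondecreasing on $(0,\infty)$ (relabelling $x_*$ as $0$), and, by the comparison used for the interval $[-\mtilde,0]$ in the proof of Lemma~\ref{lem:ra4}, it suffices to estimate the integral over $(\mtilde,\infty)^2$, the part below $\mtilde$ costing only $C(\beta,m,\sigma)\,\EE\vphi'(X)^2 e^{\vphi(X)}$.

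Next I would split the integral over $(0,\infty)^2$ into the parts $A$ over $\{x\le y\}$ and $B$ over $\{y\le x\}$ and integrate out one variable using $\vphi'(t)e^{\vphi(t)}=(e^{\vphi})'(t)$. The crucial point is to integrate by parts \emph{retaining the cancellation}, e.g.
\begin{equation*}
\int_x^\infty\vphi'(y)e^{\vphi(y)}\PP(X\ge y)\,dy=\EE\big[(e^{\vphi(X)}-e^{\vphi(x)})\indbr{X\ge x}\big],
\end{equation*}
rather than discarding the boundary term (which would cost a factor $\vphi'$ and destroy the flat, Poincar\'e‑type regime); with the convexity bound $e^{\vphi(X)}-e^{\vphi(x)}\le (X-x)\vphi'(X)e^{\vphi(X)}$ for $X\ge x$ and $\int_0^y\PP(X\le x)\,dx\le y$, this bounds $A+B$ by a constant multiple of $\int_0^\infty y\,\vphi'(y)^2 e^{\vphi(y)}\PP(X\ge y)\,dy$. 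Writing the integrand as $f(y)\,y^{\beta}\PP(X\ge y)$ with $f(y):=(y\lor 0)^{1-\beta}\vphi'(y)^2 e^{\vphi(y)}$ — which is $\ge 0$, vanishes on $(-\infty,0]$ and is nondecreasing on $(0,\infty)$, hence admissible for Lemma~\ref{lem:ra1} — Lemma~\ref{lem:ra1} reduces the matter (up to the already handled range below $\mtilde$) to the scalar inequality
\begin{equation*}
\EE\big[(X_+)^{1-\beta}\vphi'(X)^2 e^{\vphi(X)}\big]\le C(\beta,m,\sigma)\,\EE\Big[\big(\vphi'(X)^2\lor |\vphi'(X)|^{(\beta+1)/\beta}\big)e^{\vphi(X)}\Big].
\end{equation*}

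I expect this inequality to be the main obstacle and the precise point at which the argument departs from the classical case $\beta=1$ (where no surplus power of the position occurs and the estimate is immediate). On $\{X_+\le|\vphi'(X)|^{1/\beta}\}$ one has pointwise $(X_+)^{1-\beta}\vphi'(X)^2\le|\vphi'(X)|^{(\beta+1)/\beta}$, so that range feeds harmlessly into the $\ell^n_{(\beta+1)/\beta}$ term; on its complement — the flat regime — I would bound $e^{\vphi(X)}$ by convexity ($\vphi(X)\le\vphi(0)+X\vphi'(X)$), decompose dyadically in the ratio $\vphi'(X)/X^{\beta}$, and close the estimate using the superexponential tail bound of Lemma~\ref{lem:ra2}, whose strong decay both confines the relevant values of $X$ and makes the resulting series summable and of the right order. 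For $\beta=0$ I would run the same scheme, but in the previous step replace the crude bound $\int_0^y\PP(X\le x)\,dx\le y$ by the sharper $\int_0^y\vphi'(x)\PP(X\le x)\,dx\le\vphi(y)-\vphi(0)$, which does not charge the flat part of $\vphi$, apply Lemma~\ref{lem:ra1} with the (unimodal) weight $(\vphi(y)-\vphi(0))\vphi'(y)e^{\vphi(y)}$, and use $\vphi'\le 1/(2(m+3\sigma))$ — hence $e^{\vphi(x)}\le e^{\vphi(0)}e^{x/(2(m+3\sigma))}$ for $x\ge 0$ — so that the subexponential decay guaranteed by $\mu\in\mzero$ closes the estimate. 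Assembling these steps yields \eqref{eq:lem-ra3}.
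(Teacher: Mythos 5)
Your plan founders at the point you yourself flag as the main obstacle: the ``scalar inequality'' you reduce to is false for $\beta\in(0,1)$, so no amount of dyadic decomposition or appeal to Lemma~\ref{lem:ra2} can close it. The loss occurs when you pass from $A+B$ to a constant times $\int_0^\infty y\,\vphi'(y)^2e^{\vphi(y)}\PP(X\geq y)\,dy$: bounding $\int_0^y \vphi'(x)\PP(X\leq x)\,dx$ by $y\,\vphi'(y)$ (equivalently, using $X-x\leq X$ and $\int_0^y\PP(X\leq x)dx\leq y$) discards exactly the cancellation you announced you would keep; the quantity that must survive is $\vphi(y)-\inf_{[0,\infty)}\vphi$, not $y\,\vphi'(y)$. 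Concretely, take $\vphi(x)=\tfrac12(x-R)_+$ (smoothed) with $R$ large. Then
$\EE\big[X_+^{1-\beta}\vphi'(X)^2e^{\vphi(X)}\big]\geq \tfrac14 R^{1-\beta}\PP(X>R)$,
while, since $\vphi'\in\{0,\tfrac12\}$, the right-hand side equals $\tfrac14\EE\big[e^{(X-R)/2}\indbr{X>R}\big]\leq C(\beta,m,\sigma)\PP(X>R)$ for $R$ large, by the tail regularity of Proposition~\ref{prop:equiv-class-mbeta} (steps of length $2\sigma^{\beta+1}/x^\beta$ halve the tail, and these steps are $o(1)$ at level $R$). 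So the ratio grows like $R^{1-\beta}$ and no constant $C_4(\beta,m,\sigma)$ exists; note that in this example the ratio $\vphi'(X)/X_+^{\beta}$ lives at a single dyadic scale and the tail factor $\PP(X>R)$ appears on both sides, so your proposed repair in the ``flat regime'' does not help. Relatedly, ``relabelling $x_*$ as $0$'' is not available: you may subtract $\vphi(x_0)$ from $\vphi$ (both sides of \eqref{eq:lem-ra3} scale by $e^{-\vphi(x_0)}$), but you cannot translate the variable, since $\mbeta$ is not translation invariant and the quadrant decomposition (and Lemma~\ref{lem:ra4}) is tied to the origin; the hard case is precisely a minimizer $x_0$ far to the right.

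For comparison, the paper keeps the sharp quantity $\int_{x_0\lor\mtilde}^\infty\vphi(y)\vphi'(y)e^{\vphi(y)}\PP(X\geq y)\,dy$ (after normalizing $\vphi(x_0)=0$) and splits at the level $x_1$ where $\vphi'$ first exceeds $\delta=1/(2(\mtilde+\sigma^{\beta+1}))$. On the small-slope range it introduces the linearized continuation $\vphitilde$ and runs a self-improving argument (integration by parts, Cauchy--Schwarz, Lemma~\ref{lem:ra1}, and the exact choice $\delta(\mtilde+\sigma^{\beta+1})=1/2$) to bound $\EE\,\vphitilde(X)^2e^{\vphitilde(X)}\indbr{X\geq x_0}$ by $\EE\,\vphi'(X)^2e^{\vphi(X)}$; on the steep range it uses $\vphi(y)^\beta\leq\vphi'(y)^\beta y^\beta$ and Young's inequality to create the $|\vphi'|^{(\beta+1)/\beta}$ term, absorbing $\EE\,\vphi(X)e^{\vphi(X)}\indbr{X\geq x_1\lor\mtilde}$ by a second self-bounding integration by parts. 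Your sketch contains no counterpart of these self-improving steps; even in your $\beta=0$ outline, where you do retain $\vphi(y)-\vphi(0)$, the assertion that ``the subexponential decay closes the estimate'' hides exactly the bound $\EE\,\vphi(X)\vphi'(X)e^{\vphi(X)}\lesssim\EE\,\vphi'(X)^2e^{\vphi(X)}$ that the paper's Step~3a is designed to prove.
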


\begin{proof}
Let us first notice that the left-hand side of \eqref{eq:lem-ra3} is equal to
\begin{equation*}
\int_0^{\infty}\int_0^y \vphi'(x)\vphi'(y)(e^{\vphi(x)}+e^{\vphi(y)})\PP(X\leq x) \PP(X\geq y) dxdy
\end{equation*}
(since on the set $\{(x,y) : x>y\}$ we can make a change of variables swapping $x$ and $y$).

Since $\vphi$ is convex there exists a point $x_0$ (possibly $0$ or infinity) at which $\vphi$ attains its infimum  on $[0,\infty]$. Let $\mtilde$ be the number defined in Lemma~\ref{lem:ra1}. Obviously, for $\beta>0$ we can assume that $\mtilde\geq 1$ (note that we do not change the value of $\mtilde$ in the case $\beta=0$). We split the outer integral into integrals over intervals $(0,x_0\land \mtilde)$, $(x_0\land \mtilde, x_0\lor\mtilde)$ and $(x_0\lor\mtilde,\infty)$.

\step{1}
Since $\vphi$ is nonincreasing on the interval $(0,x_0\land \mtilde)$ we have
\begin{equation*}
\vphi'(x)\vphi'(y)(e^{\vphi(x)}+e^{\vphi(y)}) \leq 2 \vphi'(x)^2e^{\vphi(x)}
\end{equation*}
for $0\leq x\leq y\leq x_0\land\mtilde$. Therefore
\begin{align}
\MoveEqLeft[4]\int_0^{x_0\land \mtilde}\int_0^y  \vphi'(x)\vphi'(y) (e^{\vphi(x)}+ e^{\vphi(y)}) \PP(X\leq x)\PP(X\geq y) dxdy\nonumber\\
&\leq 2 \int_0^{x_0\land \mtilde}\int_0^y \vphi'(x)^2 e^{\vphi(x)}\PP(X\leq x)\PP(X\geq y) dxdy\nonumber\\
&\leq 2\mtilde\EE \int_0^{x_0\land\mtilde} \vphi'(x)^2 e^{\vphi(x)}\ind{\{X\leq x\}} dx \leq 2\mtilde^2\EE \vphi'(X)^2 e^{\vphi(X)} \label{eq:lem-ra3-int1},
\end{align}
where the last inequality follows from the fact that $\vphi'(x)^2e^{\vphi(x)}\leq \vphi'(X)^2e^{\vphi(X)}$ for $X\leq x\leq x_0$.

\step{2}
To estimate the integral over the interval $(x_0\land \mtilde, x_0\lor\mtilde)$ we consider two cases. If $x_0<\mtilde$, then
\begin{align}
\MoveEqLeft[4]
 \int_{x_0}^{\mtilde}\int_0^y \vphi'(x)\vphi'(y)(e^{\vphi(x)}+e^{\vphi(y)})\PP(X\leq x)\PP(X\geq y) dxdy \nonumber\\
& \leq 2\int_{x_0}^{\mtilde}\int_{x_0}^y \vphi'(y)^2 e^{\vphi(y)} \PP(X\geq y) dx dy\nonumber\\
& \leq 2\mtilde\EE \int_{x_0}^{\mtilde} \vphi'(y)^2 e^{\vphi(y)} \ind{\{X\geq y\}} dy \leq 2\mtilde^2 \EE \vphi'(X)^2e^{\vphi(X)}\label{eq:lem-ra3-int3a}.
\end{align}

If $x_0>\mtilde$, then similarly as above (the third last passage follows by Definition~\ref{def:class-mbeta} and the last by Lemma~\ref{lem:ra1}; for the third passage recall also that $\mtilde\geq 1$ for $\beta>0$)
\begin{align}
\MoveEqLeft \int_{\mtilde}^{x_0}\int_0^y \vphi'(x)\vphi'(y)(e^{\vphi(x)}+e^{\vphi(y)})\PP(X\leq x)\PP(X\geq y) dxdy \nonumber\\
\leq {}& 2\int_{\mtilde}^{x_0} \int_0^y \vphi'(x)^2 e^{\vphi(x)} \PP(X\leq x)\PP(X\geq y) dx dy\nonumber\\
 = {}&
2\int_{0}^{x_0} \int_{x\lor \mtilde}^{x_0}\vphi'(x)^2 e^{\vphi(x)} \PP(X\leq x)\PP(X\geq y) dy dx\nonumber\\
\leq {}&
2\int_{0}^{x_0} \vphi'(x)^2 e^{\vphi(x)} \PP(X\leq x)\int_{x\lor \mtilde}^{\infty} y^{\beta}\PP(X\geq y) dy dx\nonumber\\
\leq {}&
2\sigma^{\beta+1} \int_{0}^{x_0} \vphi'(x)^2 e^{\vphi(x)} \PP(X\leq x)\PP(X\geq x) dx\nonumber\\
\leq {}&
2\sigma^{\beta+1} \Big( \int_{0}^{\mtilde} \vphi'(x)^2 e^{\vphi(x)} \PP(X\leq x)dx + \int_{\mtilde}^{x_0} \vphi'(x)^2 e^{\vphi(x)} x^{\beta}\PP(X\geq x)dx\Big)\nonumber\\
\leq {}&
2\sigma^{\beta+1}( \mtilde + \sigma^{\beta+1}) \EE \vphi'(X)^2e^{\vphi(X)} \label{eq:lem-ra3-int3b}.
\end{align}

\step{3}
It remains to estimate the integral over the interval $(x_0\lor\mtilde, \infty)$. In what follows we can assume that $x_0<\infty$ and $\vphi(x_0)=0$ (since if we subtract $\vphi(x_0)$ from $\vphi$ both sides of \eqref{eq:lem-ra3} will change by a factor of  $\exp(-\vphi(x_0))$). Fix $\delta = 1/(2(\mtilde+\sigma^{\beta+1}))$ and let $x_1 = \sup\{x>x_0 : \vphi'(x) \leq \delta \}$ (with the convention that $x_1 = x_0$ if $\varphi'(x_0) > \delta$). Since $\vphi'(x)\leq 0 \leq\vphi'(y)$ for $0<x\leq x_0 \leq y$ we have
\begin{align}
\MoveEqLeft[6]\int_{x_0\lor \mtilde}^{\infty}\int_0^y  \vphi'(x)\vphi'(y) (e^{\vphi(x)}+ e^{\vphi(y)}) \PP(X\leq x)\PP(X\geq y) dxdy\nonumber\\
\leq {}&
\int_{x_0\lor \mtilde}^{\infty}\int_{x_0}^y  \vphi'(x)\vphi'(y) (e^{\vphi(x)}+ e^{\vphi(y)}) \PP(X\leq x)\PP(X\geq y) dxdy\nonumber\\
 \leq{}&
2\int_{x_0\lor \mtilde}^{\infty}\int_{x_0}^y \vphi'(x)\vphi'(y)  e^{\vphi(y)} \PP(X\geq y) dxdy\nonumber\\
={}&
2\int_{x_0\lor \mtilde}^{\infty}\vphi(y)\vphi'(y)  e^{\vphi(y)} \PP(X\geq y) dy\nonumber\\
\begin{split}
={}&
2\int_{x_0\lor\mtilde}^{x_1\lor \mtilde}\vphi(y)\vphi'(y)  e^{\vphi(y)} \PP(X\geq y) dy\\
&+ 2\int_{x_1\lor \mtilde}^{\infty}\vphi(y)\vphi'(y)  e^{\vphi(y)} \PP(X\geq y) dy.\label{eq:lem-ra3-int2}
\end{split}
\end{align}
We will estimate the two integrals from \eqref{eq:lem-ra3-int2} separately.

\step{3a}
To estimate the first integral we define a convex function
\begin{equation*}
 \vphitilde(x) =
  \begin{cases}
  \vphi(x) 			& \text{if } x\leq x_1, \\
   \vphi(x_1) + (x-x_1)\delta	& \text{if } x > x_1.
  \end{cases}
\end{equation*}
The function $\vphitilde$ is $C^1$-smooth and $0\leq \vphitilde'(x)\leq \delta$ for $x\in (x_0,\infty)$. Moreover  $\vphi(x)=\vphitilde(x)$ and $\vphi'(x)=\vphitilde'(x)$ for $x\in(x_0\lor\mtilde, x_1\lor\mtilde)$ (since this interval is contained in $(x_0, x_1)$ or is an empty set). Also $0\leq \vphitilde(x)\leq\vphi(x)$ and $\vphitilde'(x)^2\leq\vphi'(x)^2$ for $x\in\RR$. Hence (the third inequality follows from Lemma~\ref{lem:ra1})
\begin{align}
\MoveEqLeft[4]\int_{x_0\lor\mtilde}^{x_1\lor\mtilde}\vphi(y)\vphi'(y)  e^{\vphi(y)} \PP(X\geq y) dy \leq \int_{x_0\lor\mtilde}^{\infty}\vphitilde(y)\vphitilde'(y)  e^{\vphitilde(y)} \PP(X\geq y) dy\nonumber\\
&\leq
\frac{1}{2}\int_{x_0\lor\mtilde}^{\infty}(\vphitilde(y)^2 + \vphitilde'(y)^2)  e^{\vphitilde(y)} y^{\beta}\PP(X\geq y) dy\nonumber\\
&\leq
\frac{\sigma^{\beta+1}}{2}\int_{x_0}^{\infty}(\vphitilde(y)^2 + \vphitilde'(y)^2)  e^{\vphitilde(y)}  d\mu(y)\nonumber.\\
&\leq \frac{\sigma^{\beta+1}}{2} \EE \vphitilde(X)^2 e^{\vphitilde(X)}\indbr{X\geq x_0}+ \frac{\sigma^{\beta+1}}{2} \EE \vphi'(X)^2 e^{\vphi(X)}.\label{eq:lem-ra3-int2aa}
\end{align}

In order to complete the estimation of the first integral we have to deal with the expression $\EE \vphitilde(X)^2 e^{\vphitilde(X)}\indbr{X\geq x_0}$. Using integration by parts (recall that $\vphitilde(x_0) = 0$), the Cauchy-Schwarz inequality and the fact that $\vphitilde'$ is bounded by $\delta$ we can write
\begin{align}
\MoveEqLeft\EE \vphitilde(X)^2  e^{\vphitilde(X)}\indbr{X\geq x_0} \nonumber\\
={}&  2\int_{x_0}^{\infty}\vphitilde'(x)\vphitilde(x)e^{\vphitilde(x)}\PP(X\geq x)dx
+ \int_{x_0}^{\infty}\vphitilde(x)^2\vphitilde'(x)e^{\vphitilde(x)}\PP(X\geq x)dx \nonumber\\
\begin{split}
\leq {}&2\Big(\int_{x_0}^{\infty}\vphitilde(x)^2 e^{\vphitilde(x)}\PP(X\geq x)dx\Big)^{1/2}\Big(\int_{x_0}^{\infty}\vphitilde'(x)^2e^{\vphitilde(x)}\PP(X\geq x)dx\Big)^{1/2}\\
&+ \delta\int_{x_0}^{\infty}\vphitilde(x)^2e^{\vphitilde(x)}\PP(X\geq x)dx\label{eq:lem-ra3-int2ab} .
\end{split}
\end{align}

We can now use Lemma~\ref{lem:ra1} to estimate the above integrals by expressions of the type $\EE \vphitilde(X)^2 e^{\vphitilde(X)}\indbr{X\geq x_0}$ and $ \EE \vphi'(X)^2 e^{\vphi(X)}$ respectively. For example
\begin{align*}
\MoveEqLeft[4]\int_{x_0}^{\infty}\vphitilde(x)^2 e^{\vphitilde(x)}\PP(X\geq x)dx \\
\leq {}&\EE\int_{x_0}^{x_0\lor\mtilde}\vphitilde(x)^2 e^{\vphitilde(x)}\ind{\{X\geq x\}}dx +\int_{x_0\lor\mtilde}^{\infty}\vphitilde(x)^2 e^{\vphitilde(x)}x^\beta\PP(X\geq x)dx \\
\leq {}& (\mtilde+\sigma^{\beta+1}) \EE \vphitilde(X)^2 e^{\vphitilde(X)}\indbr{X\geq x_0}
\end{align*}
and similarly (at the end we skip the indicator function and replace $\vphitilde$ by $\vphi$)
\begin{equation*}
\int_{x_0}^{\infty}\vphitilde'(x)^2 e^{\vphitilde(x)}\PP(X\geq x)dx
\leq (\mtilde+\sigma^{\beta+1}) \EE \vphi'(X)^2 e^{\vphi(X)}
\end{equation*}
(note that we use the monotonicity of $\vphitilde$, $\vphitilde'$ to assure that the assumptions of Lemma~\ref{lem:ra1} are satisfied). Plugging this into~\eqref{eq:lem-ra3-int2ab} gives us
\begin{align}
\EE \vphitilde(X)^2  e^{\vphitilde(X)}\indbr{X\geq x_0}
\leq {}&
2(\mtilde+\sigma^{\beta+1})
\Big( \EE \vphitilde(X)^2 e^{\vphitilde(X)}\indbr{X\geq x_0}\Big)^{\frac{1}{2}}
\Big(\EE \vphi'(X)^2 e^{\vphi(X)}\Big)^{\frac{1}{2}}\nonumber\\
& + \delta(\mtilde+\sigma^{\beta+1}) \EE \vphitilde(X)^2 e^{\vphitilde(X)}\indbr{X\geq x_0} \label{eq:lem-ra3-int2ac},
\end{align}
which we solve with respect to $\EE \vphitilde(X)^2  e^{\vphitilde(X)}\indbr{X\geq x_0}$ and (since $\delta (\mtilde+\sigma^{\beta+1}) = 1/2$) arrive at
\begin{equation*}
\EE \vphitilde(X)^2  e^{\vphitilde(X)}\indbr{X\geq x_0}
\leq C(\beta,m,\sigma)\EE \vphi'(X)^2 e^{\vphi(X)}.
\end{equation*}
This allows us to finish the computations  of~\eqref{eq:lem-ra3-int2aa} and finally conclude that
\begin{equation}
\int_{x_0\lor\mtilde}^{x_1\lor\mtilde}\vphi(y)\vphi'(y)  e^{\vphi(y)} \PP(X\geq y) dy \leq C(\beta,m,\sigma) \EE \vphi'(X)^2 e^{\vphi(X)}.\label{eq:lem-ra3-int2ad}
\end{equation}
This ends the estimation of the first integral from~\eqref{eq:lem-ra3-int2}.

\step{3b}
To complete the proof we have to estimate the second integral from~\eqref{eq:lem-ra3-int2}. Notice that if $\beta = 0$, then $x_1=\infty$ since by assumption $\vphi' \leq\delta = 1/(2(\mtilde+\sigma)) = 1/(2(m+3\sigma)) $ and in this case that integral disappears. Therefore we can assume that $\beta > 0$.

Fix $\eps > 0$. By convexity $\vphi(y)^{\beta}\leq\vphi'(y)^{\beta}(y-x_0)^{\beta}\leq \vphi'(y)^{\beta}y^{\beta}$. Using Lemma~\ref{lem:ra1} and Young's inequality with exponents $1/(1-\beta)$ and $1/\beta$ we get
 \begin{align}
\MoveEqLeft[4] \int_{x_1\lor \mtilde}^{\infty}\vphi(y)\vphi'(y)  e^{\vphi(y)} \PP(X\geq y) dy \nonumber\\
 \leq {}&
 \int_{\mtilde}^{\infty}\indbr{y\in (x_1\lor\mtilde,\infty)}\vphi(y)^{1-\beta}\vphi'(y)^{1+\beta}  e^{\vphi(y)} y^{\beta}\PP(X\geq y) dy \nonumber\\
  \leq {}&
\sigma^{\beta+1}  \int_{\RR}\indbr{y\in(x_1\lor\mtilde,\infty)}\vphi(y)^{1-\beta}\vphi'(y)^{1+\beta}  e^{\vphi(y)} d\mu(y) \nonumber\\
 \begin{split}
 \leq{}&
 \eps(1-\beta) \sigma^{\beta+1} \EE \vphi(X) e^{\vphi(X)}\indbr{X\geq x_1\lor\mtilde}\\
  & +  \sigma^{\beta+1} C(\eps,\beta) \EE|\vphi'(X)|^{\frac{1+\beta}{\beta}} e^{\vphi(X)} . \label{eq:lem-ra3-int2ba}
\end{split}
\end{align}

We will estimate $\EE \vphi(X) e^{\vphi(X)}\indbr{X\geq x_1\lor\mtilde}$ by a sum of expressions which appear in the assertion of the lemma and
$ \int_{x_1\lor \mtilde}^{\infty}\vphi(y)\vphi'(y)  e^{\vphi(y)} \PP(X\geq y) dy$. Then we will pick $\eps = \eps(\beta, m,\sigma)$ small enough to get from \eqref{eq:lem-ra3-int2ba} an estimate of the integral $\int_{x_1\lor \mtilde}^{\infty}\vphi(y)\vphi'(y)  e^{\vphi(y)} \PP(X\geq y) dy$ which will end the estimation of the second integral from \eqref{eq:lem-ra3-int2} and the proof of Lemma~\ref{lem:ra3}. Integration by parts gives
\begin{align}\label{eq:lem-ra3-int2bb}
\begin{split}
\MoveEqLeft\EE\vphi(X)e^{\vphi(X)}\indbr{X\geq x_1\lor\mtilde} =  \vphi(x_1\lor\mtilde)e^{\vphi(x_1\lor\mtilde)}\PP(X\geq x_1\lor\mtilde)\\
  & + \int_{x_1\lor\mtilde}^{\infty}\vphi'(y)e^{\vphi(y)}\PP(X\geq y)dy
  + \int_{x_1\lor\mtilde}^{\infty} \vphi(y)\vphi'(y)e^{\vphi(y)}\PP(X\geq y)dy,
\end{split}
\end{align}
so it remains to estimate  $\int_{x_1\lor\mtilde}^{\infty} \vphi'(y)e^{\vphi(y)}\PP(X\geq y)dy$ and the boundary term  $\vphi(x_1\lor\mtilde)e^{\vphi(x_1\lor\mtilde)}\PP(X\geq x_1\lor\mtilde)$. Since $\mtilde\geq 1$ and for $x\geq x_1$ we have $\vphi'(x)\geq \delta$, we can write
\begin{align}
\int_{x_1\lor\mtilde}^{\infty} \vphi'(y)e^{\vphi(y)}\PP(X\geq y)dy \leq {}&
\frac{1}{\delta}\int_{x_1\lor\mtilde}^{\infty} \vphi'(y)^2e^{\vphi(y)}y^{\beta}\PP(X\geq y)dy \nonumber\\
\leq {}&\frac{\sigma^{\beta+1}}{\delta}\EE \vphi'(X)^2 e^{\vphi(X)}\label{eq:lem-ra3-int2bc},
\end{align}

As for the boundary term, we write it as
\begin{align}
\EE \big((\vphi(x_1\lor\mtilde)e^{\vphi(x_1\lor\mtilde)} - e^{\vphi(x_1\lor\mtilde)}+1) +  ( e^{\vphi(x_1\lor\mtilde)}-1)\big)\indbr{X\geq x_1\lor\mtilde}\label{eq:lem-ra3-int2bd}.
\end{align}
It is easy to deal with the second part, since
\begin{align}
\MoveEqLeft[4]\EE ( e^{\vphi(x_1\lor\mtilde)}-1)\indbr{X\geq x_1\lor\mtilde} \leq{} \EE  e^{\vphi(x_1\lor\mtilde)}\indbr{X\geq x_1\lor\mtilde}\nonumber\\
\leq {}& \frac{1}{\delta^2}\EE \vphi'(x_1\lor\mtilde)^2 e^{\vphi(x_1\lor\mtilde)}\indbr{X\geq x_1\lor\mtilde}\leq
\frac{1}{\delta^2} \EE \vphi'(X)^2 e^{\vphi(X)}. \label{eq:lem-ra3-int2be}
\end{align}
To estimate the first part of the boundary term, we use the fact that $(\vphi e^{\vphi}-e^{\vphi}+1)' = \vphi'\vphi e^{\vphi}$ and $\vphi(x_0)e^{\vphi(x_0)}-e^{\vphi(x_0)}+1 = 0$. Therefore
\begin{align}
\MoveEqLeft[2]\EE (\vphi(x_1\lor\mtilde) e^{\vphi(x_1\lor\mtilde)} -e^{\vphi(x_1\lor\mtilde)}+1)\indbr{X\geq x_1\lor\mtilde}\nonumber\\&
=\EE \int_{x_0}^{x_1\lor\mtilde}\vphi'(y)\vphi(y)e^{\vphi(y)}dy \indbr{X\geq x_1\lor\mtilde}\nonumber\\
&\leq
\int_{x_0}^{x_1\lor\mtilde}\vphi'(y)\vphi(y)e^{\vphi(y)} \PP(X\geq y)dy. \label{eq:lem-ra3-int2bf}
\end{align}
Notice that we already estimated $\int_{x_0\lor\mtilde}^{x_1\lor\mtilde}\vphi'(y)\vphi(y)e^{\vphi(y)} \PP(X\geq y)dy$ in Step 3a (see \eqref{eq:lem-ra3-int2ad}), so it is enough to estimate the integral over the interval $(x_0,x_0\lor\mtilde)$. By convexity of $\vphi$ we get
\begin{align}
\MoveEqLeft[6]\int_{x_0}^{x_0\lor\mtilde}\vphi'(y)\vphi(y)e^{\vphi(y)} \PP(X\geq y)dy \leq \int_{x_0}^{x_0\lor\mtilde}\vphi'(y)^2(y-x_0)e^{\vphi(y)} \PP(X\geq y)dy \nonumber\\
&\leq
\mtilde \EE \int_{x_0}^{x_0\lor\mtilde}\vphi'(y)^2 e^{\vphi(y)} \indbr{X\geq y}dy\leq
\mtilde^2 \EE \vphi'(X)^2 e^{\vphi(X)} \label{eq:lem-ra3-int2bg},
\end{align}
where the last inequality follows from the fact that $\vphi'(X)^2 e^{\vphi(X)} \geq \vphi'(y)^2 e^{\vphi(y)}$ for $X\geq y\geq x_0$. This (together with the previous inequalities \eqref{eq:lem-ra3-int2bd}, \eqref{eq:lem-ra3-int2be}, \eqref{eq:lem-ra3-int2bf}) completes the estimation of the boundary term from \eqref{eq:lem-ra3-int2bb}, i.e. we get
\begin{displaymath}
\vphi(x_1\lor\mtilde)e^{\vphi(x_1\lor\mtilde)}\PP(X\geq x_1\lor\mtilde) \le (\frac{1}{\delta^2} + \mtilde^2 + C(\beta,m,\sigma))\EE \vphi'(X)^2 e^{\vphi(X)}.
\end{displaymath}

Therefore we get from   \eqref{eq:lem-ra3-int2bb}, \eqref{eq:lem-ra3-int2bc} the announced estimate
\begin{multline*}
\EE \vphi(X) e^{\vphi(X)}\indbr{X\geq x_1\lor\mtilde} \le C(\beta,m,\sigma)\EE \vphi'(X)^2e^{\vphi(X)} \\
+ \int_{x_1\lor\mtilde}^{\infty} \vphi(y)\vphi'(y)e^{\vphi(y)}\PP(X\geq y)dy.
\end{multline*}
As explained above, plugging this into~\eqref{eq:lem-ra3-int2ba} and taking $\eps$ small enough, we deduce that
\begin{equation}
 \int_{x_1\lor \mtilde}^{\infty}\vphi(y)\vphi'(y)  e^{\vphi(y)} \PP(X\geq y) dy \leq
 C(\beta,m,\sigma) \EE (|\vphi'(X)|^{\frac{1+\beta}{\beta}} +\vphi'(X)^2) e^{\vphi(X)}   \label{eq:lem-ra3-int2bz}
\end{equation}
which ends the estimation of the second integral from \eqref{eq:lem-ra3-int2} (and finishes  Step 3b and also Step 3).

Bringing together the above results of Step 1 (see~\eqref{eq:lem-ra3-int1}), Step 2 (see~\eqref{eq:lem-ra3-int3a}, \eqref{eq:lem-ra3-int3b}) and Step 3 (see~\eqref{eq:lem-ra3-int2}, \eqref{eq:lem-ra3-int2ad} and \eqref{eq:lem-ra3-int2bz}) completes the proof of the lemma. \end{proof}

\subsection{Proofs of Theorem~\ref{thm:main} and Corollaries~\ref{cor:conc-functions}, \ref{cor:conc-functions2}, \ref{cor:conc-sets}} \label{subsec:proofs-of-main-results}

\begin{proof}[Proof of Theorem~\ref{thm:main}]
We will follow Ledoux's approach for bounded random variables. Due to the tensorization property of the entropy (see e.g.~\cite[Chapter 5]{MR1849347}) it is enough to prove the theorem for $n=1$. Also, by a standard approximation argument, we can restrict our attention to convex Lipschitz functions only. Let $Y$ be an independent copy of $X$. By Jensen's inequality we have
\begin{align*}
\Ent e^{\vphi(X)} & =  \EE \vphi(x)e^{\vphi(X)} - \EE e^{\vphi(X)} \log\EE e^{\vphi(X)}\\
& \leq
\frac{1}{2} \EE (\vphi(X) - \vphi(Y))(e^{\vphi(X)} - e^{\vphi(Y)})\\
& =
 \EE (\vphi(X) - \vphi(Y))(e^{\vphi(X)} - e^{\vphi(Y)})\ind{\{X\leq Y\}}\\
& = \EE \int_X^Y \vphi'(x)dx \int_X^Y \vphi'(y)e^{\vphi(y)}dy \ind{\{X\leq Y\}}\\
& = \int_{\RR} \int_{\RR} \vphi'(x) \vphi'(y)e^{\vphi(y)} \PP(X\leq x \land y) \PP(X\geq x \lor y ) dxdy.
\end{align*}
We split the double integral into four integrals depending on the signs of $x$ and $y$ and use Lemmas~\ref{lem:ra3} and~\ref{lem:ra4}  to obtain the desired inequality -- note that  $\law{-X}\in\mbeta$, so we can assume that the infimum of $\vphi$ is attained at some point of $[0,\infty]$ (possibly at $\infty$), which in particular means that the assumptions of  Lemma~\ref{lem:ra4} are satisfied. Moreover, we can use Lemma~\ref{lem:ra3} to handle the integration over $(-\infty,0)^2$ (again by a change of variables and the fact that $\law{-X}\in\mbeta$).
\end{proof}

In the proofs of the corollaries we will use some additional notation and observations.

\begin{remark}\label{rem:poinc}
If we substitute $\eps\vphi$ instead of $\vphi$ into  \eqref{eq:mod-lsi-beta} and divide both sides by $\eps^2$ tending to zero, then we recover the Poincar\'e inequality for convex functions:
\begin{equation*}
\Var \vphi(X_1,\ldots, X_n)\leq 2C(\beta, m, \sigma) \EE |\nabla\vphi(X_1,\ldots, X_n)|^2.
\end{equation*}
A standard approximation argument shows that also for $L$-Lipschitz (but not necessarily smooth) convex functions we have $\Var \vphi(X_1,\ldots, X_n)\leq 2C(\beta, m, \sigma) L^2$.
\end{remark}

\begin{definition}\label{def:H-H*}
For $\beta\in[0,1]$ let $\Hb\colon\RR\to[0,\infty]$ be the function given by $\Hb(t) = \max\{t^2, |t|^{(\beta+1)/\beta}\}$ and let $\Lb\colon\RR\to\RR$ be its Legendre transform.
\end{definition}

For $\beta=0$ the definition of $\Hzero$ should be understood as $\Hzero(t) = t^2\indbr{|t|\leq 1} + \infty\indbr{|t|>1}$. Similarly, all indeterminate expressions below should be interpreted as appropriate limits for $\beta\to 0^+$.
The next lemma sums up some properties of the functions $\Hb$ and $\Lb$.

\begin{lemma}\label{lem:H-H*}
\begin{enumerate}[(a)]
	\item The function $\Lb$ is given by the formula
		\begin{align*}
		\Lb(t)  =
		\begin{cases}
		t^2/4 &\text{if } 0\leq |t|\leq 2,\\
		|t|-1 &\text{if } 2\leq |t| \leq \frac{\beta+1}{\beta},\\
		\frac{1}{\beta} (\frac{\beta}{\beta+1}|t|)^{1+\beta} & \text{if } |t| \geq 				\frac{\beta+1}{\beta}.
		\end{cases}
		\end{align*}
	\item We have $\Lb(t) \geq \frac{3}{16} \min\{t^2, |t|^{1+\beta} \} $.
	\item The derivative of the function $\Lb$ is given by the formula
		\begin{align*}
		\frac{d}{dt }\Lb(t)  =
		\begin{cases}
		|t|\sgn(t)/2 &\text{if } 0\leq |t|\leq 2,\\
		\sgn(t) &\text{if } 2\leq |t| \leq \frac{\beta+1}{\beta},\\
		(\frac{\beta}{\beta+1}|t|)^{\beta}\sgn(t) & \text{if } |t| \geq 				\frac{\beta+1}{\beta}.		
		\end{cases}
		\end{align*}
		\item We have $\frac{d}{dt} \Lb(t) \leq |\frac{d}{dt} \Lb(t)| \leq\min\{|t|, |t|^{\beta} \} $.
\end{enumerate}
\end{lemma}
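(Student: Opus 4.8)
The plan is to exploit that $\Hb$ is even, so $\Lb$ is even as well and it suffices to treat $t\ge 0$. The first observation to record is that for $\beta\in(0,1]$ and $s\ge 0$ one has $\Hb(s)=s^2$ when $0\le s\le 1$ and $\Hb(s)=s^{(\beta+1)/\beta}$ when $s\ge 1$, because $(\beta+1)/\beta\ge 2$. To obtain part (a) I would then compute $\Lb(t)=\sup_{s\ge 0}\bigl(st-\Hb(s)\bigr)$ by the first-order condition on each of the two branches: on $(0,1)$ the maximiser is $s=t/2$, which lies in $(0,1)$ precisely when $t<2$; on $(1,\infty)$ the maximiser is $s=(\tfrac{\beta}{\beta+1}t)^{\beta}$, which lies in $(1,\infty)$ precisely when $t>\tfrac{\beta+1}{\beta}$; and for $2\le t\le\tfrac{\beta+1}{\beta}$ the function $s\mapsto st-\Hb(s)$ is increasing on $(0,1)$ and decreasing on $(1,\infty)$, so the supremum is attained at the kink $s=1$. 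Substituting these values back gives the three cases of the stated formula, using the one-line identity $t(\tfrac{\beta}{\beta+1}t)^{\beta}-(\tfrac{\beta}{\beta+1}t)^{\beta+1}=\tfrac1\beta(\tfrac{\beta}{\beta+1}t)^{\beta+1}$. The case $\beta=0$ is read off as the limit $\beta\to0^+$, since then $\tfrac{\beta+1}{\beta}\to\infty$ and the third branch disappears.

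Parts (c) and (d) would then follow by inspection. Differentiating the three pieces of (a) produces the three pieces of (c); one only needs to check that the one-sided derivatives agree at the two junction points $t=2$ and $t=\tfrac{\beta+1}{\beta}$ (both equal to $1$ there), which shows $\Lb\in C^1$ and validates the formula. For (d) it is enough to bound the right-hand side of (c) piece by piece: on $[0,2]$ we have $|t|/2\le|t|$ and $|t|/2\le\min\{1,|t|\}\le|t|^{\beta}$ (splitting at $|t|=1$); on $[2,\tfrac{\beta+1}{\beta}]$ both $|t|\ge1$ and $|t|^{\beta}\ge1$; and on $[\tfrac{\beta+1}{\beta},\infty)$ we have $(\tfrac{\beta}{\beta+1}|t|)^{\beta}\le|t|^{\beta}\le|t|$ because $\tfrac{\beta}{\beta+1}<1$ and $|t|\ge1$.

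The last part, (b), is where the constants must be chased carefully, and I expect it to be the main obstacle. Restricting again to $t\ge0$, I would split $[0,\infty)$ into $[0,1]$, $[1,2]$, $[2,\tfrac{\beta+1}{\beta}]$, $[\tfrac{\beta+1}{\beta},\infty)$; on the first $\min\{t^2,t^{1+\beta}\}=t^2$ and on the other three it equals $t^{1+\beta}$ (as $1+\beta\le2$). On $[0,1]$, $\Lb(t)=t^2/4=\tfrac4{16}t^2\ge\tfrac3{16}t^2$. On $[1,2]$, $\Lb(t)=t^2/4=\tfrac{t^{1-\beta}}4t^{1+\beta}\ge\tfrac14t^{1+\beta}\ge\tfrac3{16}t^{1+\beta}$. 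On $[2,\tfrac{\beta+1}{\beta}]$, $\Lb(t)=t-1$, and it suffices to show $t^{1+\beta}/(t-1)\le16/3$; the logarithmic derivative of $t\mapsto t^{1+\beta}/(t-1)$ equals $\tfrac{1+\beta}t-\tfrac1{t-1}$, which is $\le0$ for $t\le\tfrac{\beta+1}{\beta}$, so the ratio is maximal at $t=2$, where it equals $2^{1+\beta}\le4\le16/3$. Finally on $[\tfrac{\beta+1}{\beta},\infty)$, $\Lb(t)=\tfrac{\beta^{\beta}}{(\beta+1)^{1+\beta}}t^{1+\beta}$, and since $\psi(\beta):=\beta\log\beta-(1+\beta)\log(\beta+1)$ has $\psi'(\beta)=\log\tfrac{\beta}{\beta+1}<0$, the coefficient $\tfrac{\beta^{\beta}}{(\beta+1)^{1+\beta}}=e^{\psi(\beta)}$ is decreasing on $(0,1]$, hence $\ge e^{\psi(1)}=1/4\ge3/16$. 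The $\beta=0$ case of (b) is again the $\beta\to0^+$ limit (or direct, since only the first three subintervals are then relevant), and collecting the four estimates finishes the proof.
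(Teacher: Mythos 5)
Your proof is correct, and for the only part the paper actually argues in any detail, part (b), you take a genuinely different route. The paper's sketch treats (a), (c), (d) as a "straightforward calculation" — which is exactly what you carry out, including the useful observations that $\Hb(s)=s^2$ for $|s|\le 1$ and $=|s|^{(\beta+1)/\beta}$ for $|s|\ge 1$ and that the one-sided derivatives match at $t=2$ and $t=\tfrac{\beta+1}{\beta}$. For (b), however, the paper argues globally: it locates the crossing point $t_0$ of the parabola $t^2/4$ and the power branch $\tfrac1\beta(\tfrac{\beta}{\beta+1}t)^{1+\beta}$, shows $\inf_{\beta}\tfrac{t_0-1}{t_0^2/4}=\inf_{t\in[2,4]}\tfrac{t-1}{t^2/4}=3/4$, and uses convexity of $t^2$ and $|t|^{1+\beta}$ to get $\Lb(t)\ge\tfrac34\min\{t^2/4,\ \tfrac1\beta(\tfrac{\beta}{\beta+1}|t|)^{1+\beta}\}$, finishing with $\inf_\beta \tfrac1\beta(\tfrac{\beta}{\beta+1})^{1+\beta}=1/4$. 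You instead verify the bound piecewise on $[0,1]$, $[1,2]$, $[2,\tfrac{\beta+1}{\beta}]$, $[\tfrac{\beta+1}{\beta},\infty)$, with the monotonicity of $t\mapsto t^{1+\beta}/(t-1)$ on the linear piece and the monotonicity in $\beta$ of the coefficient $\beta^\beta/(\beta+1)^{1+\beta}$ (this last step reproves the paper's $1/4$ infimum). Your case check is more elementary and fully self-contained — it avoids the slightly terse convexity-comparison step and the implicit claim that $t_0\in[2,4]$ — at the cost of being longer; the paper's argument is slicker and explains where the constant $3/16=\tfrac34\cdot\tfrac14$ comes from. Both yield the same constant, and your handling of the $\beta=0$ case (empty fourth interval, or direct computation with $\Hzero$) is consistent with the paper's convention of reading $\beta=0$ as the limit $\beta\to0^+$.
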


\begin{proof}[Sketch of the proof.] A straightforward calculation gives (a); (c) and (d) follow directly from it.  To prove (b), first notice that for every $\beta\in[0,1)$ there exists exactly one strictly positive number $t_0$ such that $t_0^2/4 =  \frac{1}{\beta}(\frac{\beta}{\beta+1}t_0)^{1+\beta}$. Since we have
\begin{equation*}
\inf_{\beta\in[0,1]} \frac{t_0-1}{t_0^2/4} = \inf_{t \in [2,4]} \frac{t-1}{t^2/4} = 3/4
\end{equation*}
and the functions $t^2$ and $|t|^{1+\beta}$ are convex, we conclude that \begin{equation*}
\Lb(t) \geq \frac{3}{4} \min\Big\{ t^2/4 ,
\frac{1}{\beta} \Big(\frac{\beta}{\beta+1}|t|\Big)^{1+\beta} \Big\} \geq \frac{3}{16} \min\{t^2 ,
|t|^{1+\beta} \},
\end{equation*}
where we also used the fact that
\begin{equation*}
\inf_{\beta\in[0,1]}\frac{1}{\beta} \Big(\frac{\beta}{\beta+1}\Big)^{1+\beta}  = 1/4.
\qedhere
\end{equation*}
\end{proof}

\begin{proof}[Proof of Corollary~\ref{cor:conc-functions} for $\beta>0$.] We follow the  classical Herbst argument (see e.g. \cite{MR1849347}). Denote $A = \sup\{	|\nabla\vphi(x)| : x\in\RR^n \}$, $B = \sup\{\|\nabla\vphi(x)\|_{(\beta+1)/\beta} : x\in\RR^n \}$ and for $\lambda>0$ define $\Phi(\lambda) = \EE e^{\lambda\vphi(X_1,\ldots, X_n)}$ . Then
\begin{equation*}
\lambda\Phi'(\lambda) = \EE\lambda \vphi(X_1,\ldots, X_n)e^{\lambda\vphi(X_1,\ldots, X_n)}
\end{equation*}
and hence
\begin{align*}
\MoveEqLeft\lambda\Phi'(\lambda) - \Phi(\lambda)\log \Phi(\lambda) = \Ent e^{\lambda\vphi(X_1,\ldots, X_n)}\\
&\leq C \EE \Big((\lambda|\nabla\vphi(X_1,\ldots, X_n)|)^2\lor(\lambda\|\nabla\vphi(X_1,\ldots, X_n)\|_{\frac{\beta+1}{\beta}})^{\frac{\beta+1}{\beta}}\Big) e^{\lambda\vphi(X_1,\ldots, X_n)}\\
&\leq C ((A\lambda)^2\lor(B\lambda)^{(\beta+1)/\beta}) \Phi(\lambda).
\end{align*}
After dividing both sides by $\lambda^2\Phi(\lambda)$ we can rewrite this as
\begin{equation*}
\Big(\frac{1}{\lambda} \log \Phi(\lambda)\Big)' \leq C \big((A\lambda)^2\lor(B\lambda)^{(\beta+1)/\beta}\big)/\lambda^2.
\end{equation*}
Since  the right-hand side is an increasing function of $\lambda$ and $\lim_{\lambda\to 0^+} \frac{1}{\lambda} \log \Phi(\lambda) = \EE \varphi(X_1,\ldots, X_n)$, we deduce from the last inequality that
\begin{equation*}
\frac{1}{\lambda} \log \Phi(\lambda) \leq \EE \varphi(X_1,\ldots, X_n) +  C ((A\lambda)^2\lor(B\lambda)^{(\beta+1)/\beta})/\lambda,
\end{equation*}
which is equivalent to
\begin{equation*}
\EE e^{\lambda\vphi(X_1,\ldots, X_n)} \leq \exp(\lambda\EE \varphi(X_1,\ldots, X_n) +  C ((A\lambda)^2\lor(B\lambda)^{(\beta+1)/\beta})).
\end{equation*}

Therefore from Chebyshev's inequality we get
\begin{align*}
\PP(\vphi(X_1,\ldots, X_n)\geq t+ \EE \vphi(X_1,\ldots, X_n))&\leq \frac{\EE e^{\lambda\vphi(X_1,\ldots, X_n)}}{\exp(\lambda\EE \varphi(X_1,\ldots, X_n) + \lambda t)}\\
&\leq  \exp( - \lambda t +  C ((A\lambda)^2\lor(B\lambda)^{(\beta+1)/\beta}))).
\end{align*}
Now we can optimize the right-hand side with respect to $\lambda$.  Let $K$ and $L$ be such that $A = K^{1/2}L$, $B = K^{\beta/(\beta+1)}L$. We have
\begin{equation*}
((A\lambda)^2\lor(B\lambda)^{(\beta+1)/\beta})) = K((L\lambda)^2\lor(L\lambda)^{(\beta+1)/\beta})) = K\Hb(L\lambda)
\end{equation*}
and hence
\begin{equation*}
\PP(\vphi(X_1,\ldots, X_n)\geq t + \EE \vphi(X_1,\ldots, X_n))\leq  \exp( - CK \Lb(t/CKL)).
\end{equation*}
 Using Lemma~\ref{lem:H-H*} and the definitions of $K$ and $L$  we get
\begin{align*}
\PP(\vphi(X_1,\ldots, X_n)\geq t + \EE \vphi(X_1,\ldots, X_n))
\leq  \exp\Big(- \frac{3}{16} \min\Big\{\frac{t^2}{CA^2}, \frac{t^{1+\beta}}{ C^{\beta} B^{1+\beta}} \Big\}\Big),
\end{align*}
which is the assertion of the lemma.
\end{proof}

\begin{proof}[Proof of Corollary~\ref{cor:conc-functions} for $\beta=0$.] We proceed exactly as in the case $\beta>0$, with the only difference that we have the additional constraint $\lambda \leq 1/(2B(m+3\sigma))$ since the gradient of the function $\lambda\vphi$ has to be small. We arrive at
\begin{equation*}
\PP(\vphi(X_1,\ldots, X_n)\geq t+ \EE \vphi(X_1,\ldots, X_n))\leq  \exp( - \lambda t +  C A^2\lambda^2).
\end{equation*}
Now we can optimize the right-hand side with respect to $\lambda\in[0,  1/(2B(m+3\sigma))]$. For $t/(2CA^2) > 1/(2B(m+3\sigma))$ it is best to take $\lambda = 1/(2B(m+3\sigma))$ for which the polynomial becomes
\begin{multline*}
- \lambda t +  C A^2\lambda^2 = -t/(2B(m+3\sigma)) + C A^2/ (2B(m+3\sigma))^2 \\
\leq - t/(4B(m+3\sigma)) = - \min\{ t/(4B(m+3\sigma)),t^2/(4CA^2) \}.
\end{multline*}
For $t/(2CA^2) \leq 1/(2B(m+3\sigma))$ we put $\lambda = t/(2CA^2)$, which gives
\begin{multline*}
- \lambda t +  C A^2\lambda^2 = -t^2/(2CA^2) + t^2/(4CA^2) \\
\leq - t^2/(4CA^2) = - \min\{ t/(4B(m+3\sigma)),t^2/(4CA^2) \}.
\end{multline*}
The assertion of the lemma follows from those two inequalities .
\end{proof}

\begin{proof}[Proof of Corollary~\ref{cor:conc-functions2}] We will consider only the case $\beta>0$, the case $\beta = 0$ is similar. The function $\vphi$ is differentiable almost everywhere (with respect to the Lebesgue measure) and
\begin{equation*}
\langle \nabla \vphi(x), h \rangle \leq \vphi(x+h) - \vphi(x) \quad \text{a.e.}
\end{equation*}
After plugging $h=\nabla \vphi(x)$ and $h=(|\partial_i \vphi(x)|^{1/\beta}\sgn( \partial_i\vphi(x)))_{i=1}^{n}$, and using the Lipschitz conditions we arrive at
\begin{equation*}
\|\nabla\vphi(x)\|_2^2 \leq L_2^2, \quad
\|\nabla\vphi(x)\|_{(\beta+1)/\beta}^{1+\beta} \leq L_{1+\beta}^{1+\beta} \quad \text{a.e.}
\end{equation*}

Let $\vphi_{\eps}$ be the convolution of $\vphi$ and a Gaussian kernel, i.e. $\vphi_{\eps}(x) = \EE\vphi(x+\sqrt{\eps}G)$, where $G\sim\mathcal{N}(0,I)$. The function $\vphi_{\eps}$ is smooth and convex, and inherits from $\vphi$ the estimates of the gradient (which are now satisfied for all $x\in\RR^n$). Therefore from Corollary~\ref{cor:conc-functions} we get
\begin{equation*}
\PP(\vphi_{\eps}(X_1,\ldots, X_n)\geq t + \EE \vphi_{\eps}(X_1,\ldots, X_n))
\leq  \exp\Big(- \frac{3}{16} \min\Big\{\frac{t^2}{C L_2^2}, \frac{t^{1+\beta}}{C^{\beta} L_{1+\beta}^{1+\beta}} \Big\}\Big).
\end{equation*}
Moreover $|\vphi_{\eps}(x)-\vphi(x)| \leq L_2\sqrt{\eps} \EE \|G\|_2$ and hence $\vphi_{\eps}$ converges uniformly to $\vphi$ as $\eps$ tends to zero. This observation ends the proof of the lemma.	
\end{proof}

We will now pass to the proof of Corollary~\ref{cor:conc-sets}.

\begin{proof}[Proof of Corollary~\ref{cor:conc-sets}.] We will present a detailed proof only for the case $\beta>0$, the case $\beta = 0$ is similar. Throughout the proof we denote by $C(\beta,m,\sigma)$ the constant from Theorem~\ref{thm:main} and by $x_i$, $i=1,\ldots, n$, the coordinates of the vector $x\in\RR^n$.
The function $\Phi(x) = \inf_{a\in A} \sum_{i=1}^n \Lb(x_i-a_i)$ is convex. Therefore,  the set
\begin{equation*}
\widetilde{A} = A + \{ x\in\RR^n : \sum_{i=1}^n \Lb(x_i)<3r/16\} = \{x\in\RR^n : \Phi(x)<3r/16 \}
\end{equation*}
is open and convex.

By Rademacher's theorem, the function $\Phi$ is differentiable almost everywhere (with respect to the Lebesgue measure) and hence we can choose a dense set $D\subset\widetilde{A}$ consisting of points for which $\nabla\Phi$ exists. We define
\begin{equation*}
\vphi(x) = \sup_{y\in D} \{\Phi(y) + \langle\nabla\Phi(y), x-y\rangle \}.
\end{equation*}
This function is convex as a supremum of linear functions. Moreover $\vphi\leq\Phi$ and $\vphi=\Phi$ on the set $\cl\widetilde{A} = \{x\in\RR^n : \Phi(x)\leq3r/16 \}$.

Note that the set $\{x\in\RR^n : \vphi(x)<3r/16 \}$ is open and convex, contains the set $\widetilde{A}$ (since on $\widetilde{A}$ we have $\vphi = \Phi < 3r/16$), and is disjoint with the set $\cl\widetilde{A}\setminus\widetilde{A} = \{x\in\RR^n : \Phi(x)=3r/16 \}$ (since on $\cl\widetilde{A}\setminus\widetilde{A}$ we have $\vphi = \Phi = 3r/16$). Therefore $\{x\in\RR^n : \vphi(x)<3r/16 \}= \widetilde{A}$ and hence
\begin{multline}\label{eq:incl-balls}
\{x\in\RR^n : \vphi(x)<3r/16 \}= A + \{ x\in\RR^n : \sum_{i=1}^n \Lb(x_i)<3r/16\}\\
\subset A + \{ x\in\RR^n : \sum_{i=1}^n x_i^2\land |x_i|^{1+\beta} <r\}\subset A + r^{1/2}B_2 + r^{1/(1+\beta)}B_{1+\beta},
\end{multline}
where we used Lemma~\ref{lem:H-H*} and the observation that if $\sum_{i=1}^n x_i^2\land |x_i|^{1+\beta} <r$, then $x$ can be written as $y+z$, $y\in r^{1/2}B_2$, $z\in r^{1/(1+\beta)}B_{1+\beta}$ (since we can put $y_i=x_i$ if $x_i^2<|x_i|^{1+\beta}$,  $y_i=0$ otherwise, and $z_i=x_i$ if $x_i^2\geq|x_i|^{1+\beta}$, $z_i=0$ otherwise).

We will show that the inclusion
\begin{equation*}
\{x\in\RR^n : \vphi(x)<r/16+\EE\vphi(X_1,\ldots,X_n) \} \subset \{x\in\RR^n : \vphi(x)<3r/16 \}
\end{equation*}
holds for $r$ big enough (greater than some constant depending only on $C(\beta,m,\sigma)$) and that $\vphi$ satisfies
\begin{equation*}
|\vphi(x)- \vphi(y)| \leq L_2 \|x-y\|_2, \quad |\vphi(x)- \vphi(y)| \leq L_{1+\beta} \|x-y\|_{1+\beta}
\end{equation*}
with
\begin{equation*}
L_2^2 = \sup_{y\in D } \|\nabla\Phi(y)\|_2^2 \leq  16r/3, \quad L_{1+\beta}^{1+\beta} = \sup_{y\in D } \|\nabla\Phi(y)\|_{(\beta+1)/\beta}^{1+\beta} \leq 16r^{\beta}/3.
\end{equation*}
By the preceding Corollary~\ref{cor:conc-functions2} the above claims together with~\eqref{eq:incl-balls} will give
\begin{equation*}
\PP((X_1,\ldots, X_n)\notin  A + r^{1/2}B_2 + r^{1/(1+\beta)}B_{1+\beta} )
\leq  \exp(- C' r)
\end{equation*}
for $r$ big enough and some constant $C'$ depending only on $C(\beta,m,\sigma)$ and $\beta$. By decreasing $C'$ we will be able to guarantee that this inequality holds for all $r>0$ (recall that $\PP((X_1,\ldots, X_n)\in  A)\geq 1/2$).

We estimate the Lipschitz constants first. For $y\in D\subset\widetilde{A}$ we can choose $a^y\in\cl A$ such that
\begin{equation*}
\Phi(y) = \inf_{a\in A} \sum_{i=1}^n \Lb(y_i-a_i) = \sum_{i=1}^n \Lb(y_i-a^y_i).
\end{equation*}
Note that then $y-a^y\in\{ x\in\RR^n : \sum_{i=1}^n \Lb(x_i)\leq 3r/16\}$.
Let $e^i$ be the $i$-th vector of the canonical basis in $\RR^n$. We have
\begin{multline*}
\Phi(y+te^i) - \Phi(y) = \inf_{a\in A} \sum_{j=1}^n \Lb(y_j+t\indbr{i=j}-a_j) -  \sum_{j=1}^n \Lb(y_j-a^y_j) \\
\leq \Lb(y_i+t-a^y_i) - \Lb(y_i-a^y_i).
\end{multline*}
The function $t\mapsto \Lb(y_i+t-a^y_i)-\Lb(y_i-a^y_i) -\Phi(y+te^i)+\Phi(y)$ attains its minimum for $t=0$ and hence $\partial_i \Phi(y) = (\Lb)'(y_i - a^y_i) $. Using Lemma~\ref{lem:H-H*} we arrive at $|\partial_i \Phi(y)| \leq |y_i - a^y_i|\land|y_i - a^y_i|^{\beta}$.
Therefore,
\begin{multline*}
\|\nabla\Phi(y)\|_2^2 \leq \sum_{i=1}^n (|y_i - a^y_i|\land |y_i - a^y_i|^{\beta})^2\\
 \leq \sum_{i=1}^n |y_i - a^y_i|^2\land |y_i - a^y_i|^{1+\beta} \leq 16\sum_{i=1}^n \Lb(y_i - a^y_i)/3\leq 16r/3
\end{multline*}
and similarly
\begin{multline*}
\|\nabla\Phi(y)\|_{(\beta+1)/\beta}^{1+\beta} \leq  \Big(\sum_{i=1}^n |y_i - a^y_i|^{(\beta+1)/\beta}\land |y_i - a^y_i|^{1+\beta}\Big)^{\beta}\\
\leq \Big(\sum_{i=1}^n |y_i - a^y_i|^{2}\land |y_i - a^y_i|^{1+\beta}\Big)^{\beta}
\leq (16\sum_{i=1}^n \Lb(y_i - a^y_i)/3)^{\beta} \leq 16r^{\beta}/3.
\end{multline*}
This finishes the estimation of the Lipschitz constants.

In order to show that for $r$ big enough we have the inclusion
\begin{equation*}
\{x\in\RR^n : \vphi(x)<r/16+\EE\vphi(X_1,\ldots,X_n) \} \subset \{x\in\RR^n : \vphi(x)<3r/16 \},
\end{equation*}
we claim that $ \EE \vphi(X_1,\ldots, X_n) \leq r/8$ for $r > 2^{12} C(\beta, m,\sigma)/3$. Indeed, if this was not the case, then we could write
\begin{align*}
1/2&\leq \PP(\vphi(X_1,\ldots, X_n) \leq 0)\leq \PP(\vphi(X_1,\ldots, X_n)
\leq \EE \vphi(X_1,\ldots, X_n) - r/8) \\
&\leq \PP(|\vphi(X_1,\ldots, X_n) - \EE \vphi(X_1,\ldots, X_n)|\geq r/8) \leq 64\Var \vphi(X_1,\ldots, X_n) /r^2 \\
&\leq  64 \cdot 16/3 \cdot 2C(\beta,m,\sigma)/r,
\end{align*}
where the first inequality follows from the fact that on the set $A$ we have $\vphi=\Phi=0$, and the last inequality follows from the fact that $\mu$ satisfies the Poincar\'e inequality (see Remark~\ref{rem:poinc}) and $\vphi$ is $\sqrt{16r/3}$-Lipschitz. This yields a contradiction.
\end{proof}

\section{A link to transport inequalities}\label{sec:transport}
In this section we will present several equivalent formulations of modified log-Sobolev inequalities for convex functions in terms of transportation inequalities or $\tau$-log-Sobolev inequalities, in the spirit of \cite[Theorem 6.8]{gozlan}. We would like to emphasize that our arguments in this part of the article follow quite closely the approach in \cite{gozlan} (in fact the authors of \cite{gozlan}, while working with the classical convex log-Sobolev inequality, i.e. $\beta = 1$, mention the possibility of extending their results to more general cost functions). In particular we do not claim any novelty of the methods we use.

Our main motivation for providing an equivalent formulation of modified log-Sobolev inequalities for all $\beta \in [0,1]$ is the hope that it may help find a characterization of measures satisfying the inequalities in question. In fact, one interesting observation which follows from our considerations is that modified log-Sobolev inequalities are equivalent to formally stronger results in which the entropy is replaced by a larger functional (see Remark~\ref{rem:stronger}).

Since we will rely heavily on techniques developed by other authors, we decided to present only the main steps and ideas of the proofs. We include numerous references to appropriate sources for the Reader interested in all technical details.

Before stating the actual results we have to introduce some notation which we will use throughout this section.

For $x=(x_1,\ldots, x_n)\in\RR^n$ and $\beta\in(0,1]$ (we will treat the case $\beta = 0$ later) we define
\begin{equation*}
\Hbn(x) = \sum_{i=1}^n \Hb(x_i), \quad \Lbn(x) = \sum_{i=1}^n \Lb(x_i),
\end{equation*}
where $\Lb$ is the Legendre transform of the function $\Hb(s) = s^2\lor |s|^{(\beta+1)/\beta}$ (see Definition~\ref{def:H-H*}).
For $t>0$, $\beta\in(0,1]$, and a function $f \colon \RR^n \to \RR$ we define the infimum-convolution operator
\begin{equation*}
Q_t f(x) = \inf_{y\in\RR^n} \{f(y) + t \Lbn((x-y)/t) \} \in [-\infty,\infty).
\end{equation*}

Let $P_1(\RR^n)$ denote the class of all probability measures on $\RR^n$ with finite first moment. If $\mu  \in P_1(\RR^n$), we define the transport cost
\begin{equation*}
\overline{\mathcal{T}}(\mu|\nu) = \inf_{\pi}\{ \int_{\RR^n} \Lbn\big(x - \int_{\RR^n} y dp_x(y)\big) d\nu(x) \},
\end{equation*}
where the infimum is taken over all probability measures $\pi$ on $\RR^n\times\RR^n$ with the first marginal equal to $\nu$ and the second to $\mu$; here $p_x$ denotes the probability kernel such that  $\pi(dx dy) = \nu(dx)p_x(dy)$.

If $f \colon \RR^n \to \RR$ is bounded below by an affine function, we define moreover
\begin{equation*}
R^{\lambda}f(x) = \inf_{p}\{ \int_{\RR^n} f(y)dp(y) + \lambda \Lbn\big(x - \int_{\RR^n} y dp(y)\big)\} \in [-\infty,\infty)
\end{equation*}
where the infimum is taken over all probability measures $p \in P_1(\RR^n)$. Note that $R^{\lambda}f\leq f$.

Recall also, that for two probability measures $\mu, \nu$ on $\RR^n$ the relative entropy of $\nu$ with respect to $\mu$ is given by the formula
\begin{equation*}
H(\nu|\mu) = \int_{\RR^n} \log\Big(\frac{d\nu}{d\mu}\Big) d\nu
\end{equation*}
if $\nu$ is absolutely continuous with respect to $\mu$; otherwise we set $H(\nu|\mu) = +\infty$.

The main result of this section provides a characterization of the modified log-Sobolev inequalities with $\beta \in (0,1]$ for convex functions in terms of transportation inequalities.
\begin{proposition}\label{prop:transport}
Let $X$ be an integrable random vector with values in $\RR^n$ and law $\mu$. For $\beta\in(0,1]$ the following conditions are equivalent.
\begin{enumerate}
\item[\itemi] There exists $b>0$ such that for all probability measures $\nu$ on $\RR^n$, $
\overline{\mathcal{T}}(\mu|\nu) \leq b H(\nu|\mu) $.
\item[\itemii]
For all $s > 0$ we have $\EE e^{s|X|} < \infty$
and
there exist $\lambda, D>0$ such that
\begin{equation}
\Ent e^{f(X) }\leq D \EE (f(X) - R^{\lambda}f(X))e^{f(X)}\label{eq:tau-lsi}
\end{equation}
for all functions $f\colon\RR^n\to\RR$ such that $\EE f(X)e^{f(X)}<\infty$ and $f$ is bounded below by an affine function.
\item[\itemiii]
For all $s > 0$ we have $\EE e^{s|X|} < \infty$
and
there exists $C>0$ such that
\begin{equation}
\Ent e^{\vphi(X) }\leq C \EE  \Hbn(\nabla \vphi(X))e^{\vphi(X)}\label{eq:mod-lsi-H}
\end{equation}
for all smooth convex Lipschitz functions $\vphi\colon\RR^n\to\RR$.

\item[\itemiv]
For all $s > 0$ we have $\EE e^{s|X|} < \infty$
and
there exists $B>0$ such that for every $t_0\in [0, B]$, every $t>0$, and every convex Lipschitz function $\vphi\colon\RR^n\to\RR$  bounded from below,
\begin{equation}\label{eq:hypercontr}
\big\| e^{Q_t \vphi(X)} \big\|_{k(t)} \leq  \big\| e^{\vphi(X)} \big\|_{k(0)},
\end{equation}
where $k(t) = (1+B^{-1}(t-t_0))\indbr{t\leq t_0} + (1+B^{-1}\beta^{-1} (t-t_0))^{\beta}\indbr{t>t_0}$.  Here, for a positive random variable $Z$, we use the notation $\|Z\|_0 = \exp( \EE \log Z )$ and $\|Z\|_k = (\EE Z^k)^{1/k}$ if $k\neq 0$ .
\end{enumerate}
Moreover, in each of the above implications the constants in the conclusion depend only on the constants in the premise (in particular they do not depend on the dimension).
\end{proposition}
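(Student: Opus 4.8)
\textbf{Proof proposal for Proposition~\ref{prop:transport}.}

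The plan is to prove the chain of implications $\itemi \Rightarrow \itemii \Rightarrow \itemiii \Rightarrow \itemiv \Rightarrow \itemi$, following closely the strategy of \cite[Theorem~6.8]{gozlan}, but adapting each step to the cost function $\Lbn$ in place of the quadratic one. The equivalence $\overline{\mathcal{T}}(\mu|\nu)\le bH(\nu|\mu)$ for all $\nu$ should first be recast, via a Bobkov--G\"otze type duality argument, as the statement that for every function $g$ bounded below by an affine function one has $\EE e^{Q_1 g(X)}\le e^{\EE g(X)}$ (up to rescaling), where $Q_1$ is the infimum-convolution semigroup associated to $\Lbn$; this is the Kantorovich duality for the weak transport cost $\overline{\mathcal T}$, as developed in \cite{gozlan}. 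From this dual form one extracts a ``defective'' log-Sobolev type inequality by differentiating along the scaling $g \mapsto sg$ or by applying the dual inequality to $g = f/D$ and using the tensorization/linearization tricks; this yields \eqref{eq:tau-lsi} in \itemii{} with the operator $R^\lambda$ replacing $Q$, exactly as in the passage from transport to $\tau$-log-Sobolev in \cite{gozlan}. The finiteness of all exponential moments $\EE e^{s|X|}<\infty$ follows because $\overline{\mathcal T}(\mu|\nu)\le bH(\nu|\mu)$ forces (by testing against suitable $\nu$, or via the dual inequality with linear $g$) a sub-exponential tail on $\mu$.

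For $\itemii \Rightarrow \itemiii$ the idea is to apply \eqref{eq:tau-lsi} to $f = \vphi$ with $\vphi$ smooth convex Lipschitz, and to bound $\vphi(x) - R^\lambda \vphi(x)$ pointwise. Since $\vphi$ is convex, taking the competitor $p = \delta_y$ in the definition of $R^\lambda$ and then optimising over $y$ near $x$ (equivalently, using that for convex $\vphi$ the infimum-convolution $R^\lambda\vphi$ has a first-order Taylor expansion governed by the Legendre transform), one obtains $\vphi(x) - R^\lambda\vphi(x) \le \lambda \,\Lbn^{**}$-type bound, and because $\Lb$ is the Legendre transform of $\Hb$, the conjugate relation gives $\vphi(x) - R^\lambda\vphi(x) \le C\,\Hbn(\nabla\vphi(x))$ for a constant $C$ depending only on $\lambda$ and the structure of $\Hb$; here one uses Lemma~\ref{lem:H-H*}(d) to control the relevant derivatives. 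Plugging this into \eqref{eq:tau-lsi} gives \eqref{eq:mod-lsi-H} with $C = C(D,\lambda,\beta)$. The implication $\itemiii \Rightarrow \itemiv$ is the Gross-type argument turning a (modified) log-Sobolev inequality into a hypercontractivity statement: one differentiates $t\mapsto \big\|e^{Q_t\vphi(X)}\big\|_{k(t)}$ in $t$, uses that $u(t,x) = Q_t\vphi(x)$ solves (in the viscosity sense) a Hamilton--Jacobi inequality $\partial_t u + \Hbn(\nabla u)\le 0$ adapted to the cost $\Lbn$, and chooses $k(t)$ precisely so that the derivative of $k(t)^{-1}\log\EE e^{k(t)u(t,X)}$ is $\le 0$; the piecewise definition of $k(t)$ (linear growth up to $t_0$, then $\beta$-power growth) reflects the two regimes of $\Hb$. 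The cutoff $t_0\in[0,B]$ and the normalisation $\|\cdot\|_0 = \exp(\EE\log(\cdot))$ are exactly as in the Bobkov--Ledoux/Gentil--Guillin--Miclo hypercontractivity scheme.

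Finally, $\itemiv \Rightarrow \itemi$: specialising \eqref{eq:hypercontr} with $t_0 = 0$ (or $t_0=t$ suitably), $k(0)=1$ and letting $t\to\infty$, the left-hand side $\big\|e^{Q_t\vphi(X)}\big\|_{k(t)}$ with $k(t)\to$ (value making the norm an exponential of expectation, i.e. $k(t)\to 0$ via the appropriate scaling of $\vphi$) yields the dual transport inequality $\EE e^{Q_1\psi(X)}\le e^{\EE\psi(X)}$ for all convex $\psi$ bounded below, which by the Kantorovich duality for $\overline{\mathcal T}$ (as in \cite{gozlan}) is equivalent to $\overline{\mathcal T}(\mu|\nu)\le bH(\nu|\mu)$ for all $\nu$, since the restriction to convex test functions is harmless for the weak transport cost (here one uses that $\overline{\mathcal T}$ only sees barycenters, so its dual is naturally expressed via convex functions). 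The main obstacle I expect is the $\itemiii \Rightarrow \itemiv$ step: one must verify carefully that $Q_t\vphi$ satisfies the correct Hamilton--Jacobi inequality for the non-homogeneous Hamiltonian $\Hbn$ (the Legendre duality $\partial_t u + \Hbn(\nabla u) = 0$ for $u = Q_t\vphi$ with $\Lbn$-cost), that the Lipschitz bound on $\vphi$ persists under $Q_t$, and that the specific choice of $k(t)$ makes the logarithmic derivative nonpositive across the kink at $t=t_0$ — this requires matching the two regimes of $\Hb$ to the two regimes of $k$ and checking continuity of the relevant quantities; the non-smoothness of $\Hb$ and $\Lb$ at the transition points (cf. Lemma~\ref{lem:H-H*}) makes this bookkeeping delicate. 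A secondary technical point is ensuring all the integrability and approximation conditions (restricting to convex Lipschitz $\vphi$, bounded below, and the exponential moment hypotheses) are propagated correctly through each implication, which is why the hypothesis $\EE e^{s|X|}<\infty$ for all $s$ is carried along explicitly in \itemii{}--\itemiv{}.
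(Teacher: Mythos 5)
Your cycle \itemi$\Rightarrow$\itemii$\Rightarrow$\itemiii$\Rightarrow$\itemiv$\Rightarrow$\itemi{} and the mechanisms for the two middle implications (the pointwise bound $\vphi-R^{\lambda}\vphi\le C\Hbn(\nabla\vphi)$ via convexity and Legendre--Young duality, and the Hamilton--Jacobi/Gross argument with the piecewise $k(t)$) coincide with the paper's proof; the technical points you flag there (a.e. validity of $\partial_tQ_t\vphi+\Hbn(\nabla Q_t\vphi)=0$ and differentiability of $F(t)=\EE e^{k(t)Q_t\vphi(X)}$) are handled in the paper by a Gaussian-convolution perturbation of $\mu$ and the homogeneity bound $\Hb(ax)\le\max\{|a|^2,|a|^{(\beta+1)/\beta}\}\Hb(x)$, following \cite{MR3186934}. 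However, the two remaining implications are described through mechanisms that would not work as written. For \itemi$\Rightarrow$\itemii{} you propose to pass to the dual (infimum-convolution) form and then ``differentiate along the scaling $g\mapsto sg$'' or apply the dual inequality to $f/D$; such a linearization gives at best a Poincar\'e-type statement and there is no visible way to make the term $R^{\lambda}f$ of \eqref{eq:tau-lsi} appear from the dual side. The argument you need (the one in \cite[Proposition~6.3]{gozlan}, which the paper reproduces) is primal: apply the transport inequality to the tilted measure $\nu_f$ with density $e^{f}/\int e^{f}d\mu$, write any coupling as $\pi(dxdy)=\nu_f(dx)p_x(dy)$, use the definition of $R^{\lambda}$ to bound $f(x)-\int f\,dp_x\le f(x)-R^{\lambda}f(x)+\lambda\Lbn\big(x-\int y\,dp_x(y)\big)$, optimize over $\pi$, and conclude with Jensen's bound $H(\nu_f|\mu)\le\int f\,d\nu_f-\int f\,d\mu$, which gives $D=1/(1-\lambda b)$ for any $\lambda\in(0,1/b)$. (Also, ``a sub-exponential tail'' is not enough for \itemii: you need $\EE e^{s|X|}<\infty$ for \emph{every} $s>0$, which the paper obtains from the dual inequality applied to $\vphi(x)=r|x|$ with arbitrary $r$, using that $Q_1\vphi(x)\ge r|x|/2$ for $|x|$ large.)

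The more serious problem is \itemiv$\Rightarrow$\itemi. Your specialization is internally inconsistent: with $t_0=0$ one has $k(0)=1$ and $k$ increasing, so $k(t)\to\infty$ rather than $0$, and with $k(0)=1$ the right-hand side of \eqref{eq:hypercontr} is $\EE e^{\vphi(X)}$ rather than $e^{\EE\vphi(X)}$, so no limit $t\to\infty$ produces the dual transport inequality. The correct choice is the opposite endpoint: take $t_0=B$, so that $k(0)=0$ and the right-hand side becomes $\exp(\EE\vphi(X))$, and take $t=B\wedge1\le1$, for which $k(t)=(B\vee1)^{-1}$; since $t\mapsto Q_t\vphi$ is nonincreasing, $Q_1\vphi\le Q_t\vphi$ and \eqref{eq:hypercontr} yields $(\EE e^{(B\vee1)^{-1}Q_1\vphi(X)})^{B\vee1}\le e^{\EE\vphi(X)}$ for all convex Lipschitz $\vphi$ bounded below, which is precisely the dual formulation of \itemi{} with $b=B\vee1$ (and, as you correctly note, restricting to convex test functions is harmless here, by \cite[Proposition~5.5 and Corollary~3.11]{gozlan}).
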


\begin{remark}
Using the terminology and notation introduced in \cite{gozlan} the above proposition can be stated shorter: \itemi{} means that the transport inequality $\overline{\mathbf{T}}\vphantom{\mathbf{T}}_c^-(b)$ holds (for the cost function
$c(x,p) = \Lbn(x - \int_{\RR^n} y dp(y))$; see~\cite[Definition 5.1]{gozlan}) and \itemii{} is the $\tau$-log-Sobolev inequality, denoted $(\tau)-\mathbf{LSI}_c(\lambda, D)$ (for the same cost function; see\cite[Definition 6.1]{gozlan}).
\end{remark}

\begin{remark}
The exponential integrability of the norm in items \itemii{}-\itemiv{} is introduced to exclude heavy tailed measures for which the only exponentially integrable convex functions are constants. Using the observation that $R^\lambda f(x) = \inf_y \{\varphi(y) + \lambda \Lbn(x-y)$\}, where $\varphi$ is the greatest convex minorant of $f$ (see~\cite[Corollary 3.11 (2)]{gozlan}), one can easily see that such measures trivially satisfy \eqref{eq:tau-lsi} with $D = 1$ and arbitrary $\lambda > 0$, while they cannot satisfy the inequality of point~\itemi{}, as it implies subexponential concentration.
\end{remark}

\begin{remark}\label{rem:stronger}
As we will see from the proof (see~\eqref{eq:stronger}), the conditions of the proposition remain equivalent if we replace the entropy on the left-hand side of the inequalities in \itemii{} and \itemiii{} by the (greater) expression
\begin{equation*}
\int_{\RR^n} \vphi e^{\vphi}d\mu - \int_{\RR^n} \vphi d\mu \int_{\RR^n} e^{\vphi}d\mu
 =
\frac{1}{2} \EE (\vphi(X) - \vphi(Y))(e^{\vphi(X)} - e^{\vphi(Y)})
\end{equation*}
(where $X,Y$ are independent with law $\mu$). Therefore the first estimate from the proof of Theorem~\ref{thm:main} is actually quite sharp. Note that there does not exist a finite constant $C$ such that $C \Ent e^X \ge \EE (X-Y)(e^X-e^Y)$ for all bounded i.i.d. random variables $X$ and $Y$ (as one can see by considering e.g. $X$ with the distribution $\frac{1}{2}\delta_{-a} + \frac{1}{2}\delta_{a}$ for $a$ sufficiently large).
\end{remark}

\begin{remark}\label{rem:dual} In the proofs we will use a dual formulation of   \itemi{}: the transport cost inequality holds if and only if  we have
\begin{equation}\label{eq:dual-formulation}
\big(\int_{\RR^n} e^{b^{-1}Q_1\vphi(x)} d\mu(x) \big)^{b}  e^{-\int_{\RR^n} \vphi(x) d\mu(x)}\leq 1
\end{equation}
 for all convex Lipschitz functions $\vphi\colon\RR^n\to\RR$ bounded from below
(see \cite[Proposition~5.5~(ii'') and Corollary~3.11~(2)]{gozlan}; note also that $Q_1 \vphi = R^1\vphi$ since $\vphi$ is convex).
\end{remark}

For $\beta=0$ a result similar to Proposition \ref{prop:transport} holds, but we have to introduce a slightly different notation which will additionally depend on a number $\delta>0$. Namely, for $x=(x_1,\ldots, x_n)\in\RR^n$ we define
\begin{equation*}
\Hzerodeltan{\delta}(x) = \sum_{i=1}^n \Hzerodelta{\delta}(x_i), \quad \Lzerodeltan{\delta}(x) = \sum_{i=1}^n \Lzerodelta{\delta}(x_i),
\end{equation*}
where $\Hzerodelta{\delta}(s) = \Hzero(s/\delta) = \delta^{-2} s^2\indbr{|s|\leq \delta} + \infty\indbr{|s|>\delta}$ and $\Lzerodelta{\delta}(s) =\Lzero(s\delta) =   \frac{1}{4}\delta^{2} s^2\indbr{\delta|s|\leq 2 } + (\delta|s|-1)\indbr{\delta|s|>2}$ is its Legendre transform. We define $Q_{t,\delta}$, $\overline{\mathcal{T}}_{\delta}$, $R^{\lambda}_{\delta}$ as $Q_t$, $\overline{\mathcal{T}}$, $R^{\lambda}$ above, but with $\Hzerodeltan{\delta}$ and $\Lzerodeltan{\delta}$ in place of $\Hbn$ and $\Lbn$ respectively, i.e. we put
\begin{align*}
Q_{t,\delta} f(x)
	&= \inf_{y\in\RR^n} \{f(y) + t \Lzerodeltan{\delta}((x-y)/t) \},\\
\overline{\mathcal{T}}_{\delta}(\mu|\nu)
	&= \inf_{\pi}\{ \int_{\RR^n} \Lzerodeltan{\delta}\big(x - \int_{\RR^n} y dp_x(y)\big) d\nu(x) \},\\
R^{\lambda}_{\delta}f(x)
	&= \inf_{p \in P_1(\RR^n)}\{ \int_{\RR^n} f(y)dp(y) + \lambda \Lzerodeltan{\delta}\big(x - \int_{\RR^n} y dp(y)\big)\}.
\end{align*}

We then have the following
\begin{proposition}\label{prop:transport-zero}
Let $X$ be an integrable random vector with values in $\RR^n$ and law $\mu$. The following conditions are equivalent.
\begin{enumerate}
\item[\itemi] There exist $\delta_1$ and $b>0$ such that for all probability measures $\nu$ on $\RR^n$,
$\overline{\mathcal{T}}_{\delta_1}(\mu|\nu) \leq b H(\nu|\mu) $.
\item[\itemii] There exist $\delta_2>0, \lambda, D>0$ such that
$
\EE e^{D^{-1} \Lzerodeltan{\delta_2}(X)} < \infty
$
and
\begin{equation*}
\Ent e^{f(X) }\leq D \EE (f(X) - R^{\lambda}_{\delta_2}f(X))e^{f(X)}
\end{equation*}
for all functions $f\colon\RR^n\to\RR$ such that $\EE f(X)e^{f(X)}<\infty$ and $f$ is bounded below by an affine function.
\item[\itemiii]
There exist $\delta_3, C>0$ such that
$
\EE e^{C^{-1} \Lzerodeltan{\delta_3}(X)} < \infty
$
and
\begin{equation*}
\Ent e^{\vphi(X) }\leq C \EE  \Hzerodeltan{\delta_3}(\nabla \vphi(X))e^{\vphi(X)}
\end{equation*}
for all smooth convex Lipschitz functions $\vphi\colon\RR^n\to\RR$ with $\|\nabla \vphi \|_{\infty}\leq \delta_3$.

\item[\itemiv]
There exist $\delta_4, B>0$ such that
$
\EE e^{B^{-1} \Lzerodeltan{\delta_4}(X)} < \infty
$
and for every $t_0\in [0, B]$, every $t>0$, and every convex Lipschitz function $\vphi\colon\RR^n\to\RR$  bounded from below,
\begin{equation*}
\big\| e^{Q_{t,\delta_4} \vphi(X)} \big\|_{k(t)} \leq  \big\| e^{\vphi(X)} \big\|_{k(0)},
\end{equation*}
where $k(t) = B^{-1}\min\{1+B^{-1}(t-t_0) , 1\}$.

\end{enumerate}
Moreover, in each of the above implications the constants in the conclusion depend only on the constants in the premise (in particular they do not depend on the dimension).
\end{proposition}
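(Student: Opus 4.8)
The proof will run parallel to that of Proposition~\ref{prop:transport}, which in turn follows the Bobkov--Gentil--Ledoux and Gozlan--Roberto--Samson--Tetali scheme \cite{gozlan}, with $\Hb,\Lb,Q_t,R^{\lambda},\overline{\mathcal{T}}$ replaced throughout by the $\delta$-scaled objects $\Hzerodelta{\delta},\Lzerodelta{\delta},Q_{t,\delta},R^{\lambda}_{\delta},\overline{\mathcal{T}}_{\delta}$; the genuinely new points are the extra parameter $\delta$ and the a priori bound $\|\nabla\vphi\|_\infty\le\delta$ that must be respected at every step. The plan is to establish the cycle \itemiii{}$\Rightarrow$\itemiv{}$\Rightarrow$\itemi{}$\Rightarrow$\itemii{}$\Rightarrow$\itemiii{}. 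The standing facts I would use about the constrained Hopf--Lax semigroup are: for convex Lipschitz $\vphi$ bounded from below, $Q_{t,\delta}\vphi$ is convex, $\|\nabla Q_{t,\delta}\vphi\|_\infty\le\delta$ (its a.e.\ gradient is $(\Lzerodelta{\delta})'$ of the relevant increment, and $|(\Lzerodelta{\delta})'|\le\delta$), $Q_{t,\delta}\vphi=R^{t}_{\delta/t}\vphi$, and $u(t,x)=Q_{t,\delta}\vphi(x)$ is an a.e.\ solution of $\partial_t u\le-\Hzerodeltan{\delta}(\nabla u)$ with $u(0,\cdot)=\vphi$; I would also use that \itemiii{} extends from smooth to arbitrary convex Lipschitz functions with $\|\nabla\cdot\|_\infty\le\delta_3$ by Gaussian mollification (which preserves convexity and the sup-norm of the gradient, exactly as in the proof of Corollary~\ref{cor:conc-functions2}). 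The regularity of $Q_{t,\delta}$ would be quoted from \cite{gozlan}.

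For \itemiii{}$\Rightarrow$\itemiv{} I would put $\delta_4:=\delta_3$ and $B:=\max\{C,1\}$, fix a convex Lipschitz $\vphi$ bounded below and $t_0\in[0,B]$, and differentiate $N(t):=\|e^{Q_{t,\delta_4}\vphi(X)}\|_{k(t)}$. Writing $g_t=Q_{t,\delta_4}\vphi$ and $p=k$, one has $0\le p(t)\le B^{-1}\le1$, so $p(t)g_t$ is convex with $\|\nabla(p(t)g_t)\|_\infty\le p(t)\delta_4\le\delta_3$ — hence admissible for \itemiii{} — and $\Hzerodeltan{\delta_4}(p(t)v)=p(t)^2\Hzerodeltan{\delta_4}(v)$ for $\|v\|_\infty\le\delta_4$. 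Substituting \itemiii{} applied to $p(t)g_t$ and the Hamilton--Jacobi inequality $\partial_t g_t\le-\Hzerodeltan{\delta_4}(\nabla g_t)$ into the standard Gross-type identity for $(\log N)'(t)$ (which expresses it through $\Ent e^{p(t)g_t(X)}$ and $\EE[(\partial_t g_t)(X)e^{p(t)g_t(X)}]$) gives $(\log N)'(t)\le(Cp'(t)-1)\cdot\EE[\Hzerodeltan{\delta_4}(\nabla g_t(X))e^{p(t)g_t(X)}]/\EE e^{p(t)g_t(X)}\le0$, since $p'(t)\le B^{-2}$ and $CB^{-2}\le1$; hence $N$ is non-increasing and $N(t)\le N(0)=\|e^{\vphi(X)}\|_{k(0)}$, which is \itemiv{} (the degenerate value $p(t)=0$ being handled by continuity of $N$, using the exponential integrability of $\Lzerodeltan{\delta_4}(X)$). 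Since moreover $B\ge C$ forces $B^{-1}\le C^{-1}$, the integrability hypothesis of \itemiii{} directly implies that of \itemiv{}.

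For \itemiv{}$\Rightarrow$\itemi{} I would specialize to $t_0=t=B$, where $k(B)=B^{-1}$ and $k(0)=0$, so \itemiv{} reads $(\EE e^{B^{-1}Q_{B,\delta_4}\vphi(X)})^B\le\exp(\EE\vphi(X))$; using $Q_{B,\delta_4}\vphi=R^B_{\delta_4/B}\vphi$ and $B^{-1}R^B_{\delta_4/B}\vphi=R^1_{\delta_4/B}(B^{-1}\vphi)$, and letting $\psi=B^{-1}\vphi$ range over all convex Lipschitz functions bounded below, this becomes $\int e^{R^1_{\delta_4/B}\psi}\,d\mu\le\exp(\int\psi\,d\mu)$, which by the $\delta$-scaled $\beta=0$ dual characterization of weak transport-entropy (the analogue of Remark~\ref{rem:dual}, via \cite[Proposition~5.5, Corollary~3.11]{gozlan}) is precisely \itemi{} with $\delta_1=\delta_4/B$ and $b=1$; integrability of $X$ is then automatic, since \itemi{} entails subexponential concentration. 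The implication \itemi{}$\Rightarrow$\itemii{} (weak transport-entropy $\Rightarrow$ $\tau$-log-Sobolev) I would obtain as in \cite[Theorem~6.8]{gozlan} for the cost $c(x,p)=\Lzerodeltan{\delta_1}(x-\int y\,dp(y))$, whose structural hypotheses are readily verified for $\Lzerodelta{\delta}$, after the elementary rescaling $x\mapsto\delta_1 x$ reducing to $\delta_1=1$; one takes $\delta_2=\delta_1$. Finally, for \itemii{}$\Rightarrow$\itemiii{}: for smooth convex $\vphi$ with $\|\nabla\vphi\|_\infty\le\delta_3:=\delta_2$ and any probability measure $p$, convexity gives $\vphi(x)-\int\vphi\,dp\le\langle\nabla\vphi(x),x-\int y\,dp\rangle$, whence $\vphi(x)-R^{\lambda}_{\delta_2}\vphi(x)\le\sup_{w}\{\langle\nabla\vphi(x),w\rangle-\lambda\Lzerodeltan{\delta_2}(w)\}=\lambda\,\Hzerodeltan{\delta_2}(\nabla\vphi(x)/\lambda)\le\lambda^{-1}\Hzerodeltan{\delta_2}(\nabla\vphi(x))$, the last step because $\Hzerodeltan{\delta_2}(v/\lambda)\le\lambda^{-2}\Hzerodeltan{\delta_2}(v)$ when $\|v\|_\infty\le\delta_2$; plugging this into \itemii{} yields \itemiii{} with $C=D/\lambda$. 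In each step the exponential-integrability conditions propagate after harmlessly enlarging the constants in \itemii{} and \itemiii{} (which only weakens both the main inequality and the integrability hypothesis) and, where convenient, using that the modified log-Sobolev inequality itself forces subexponential integrability of $X$ by a Herbst argument applied to affine functions.

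The step I expect to be the main obstacle is the bookkeeping imposed by the constraint $\|\nabla\cdot\|_\infty\le\delta$: one must check that every function fed into \itemiii{} — in particular $k(t)Q_{t,\delta_4}\vphi$ along the whole semigroup — keeps its gradient sup-norm below the admissible threshold, which is exactly what dictates both the capping of $k(t)$ at a constant and the lower bound $B\ge\max\{C,1\}$, and one must track how $\delta_1,\dots,\delta_4$ transform under the above rescalings and how the integrability exponents line up. A secondary, purely technical point is the regularity of the constrained Hopf--Lax semigroup $Q_{t,\delta}$ (a.e.\ differentiability in $(t,x)$, the Hamilton--Jacobi inequality, the behaviour as $t\to0^+$) needed to legitimize the differentiation in \itemiii{}$\Rightarrow$\itemiv{}; this is standard, but since the Hamiltonian $\Hzerodelta{\delta}$ is finite only on a bounded set it must be quoted with some care from \cite{gozlan}.
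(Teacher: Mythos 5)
Your overall route coincides with the paper's: the same cycle of implications, the same Jensen/kernel argument for the weak transport $\Rightarrow$ $\tau$-log-Sobolev step, the same Hamilton--Jacobi/hypercontractivity argument with the capped function $k(t)$ (your computation $(\log N)'\le (Ck'-1)\cdot(\ldots)\le 0$ is exactly the paper's differential inequality, and $B=C\vee 1$ is the paper's choice), and the same dual formulation of the weak transport inequality for \itemiv{}$\Rightarrow$\itemi{} (your specialization $t=t_0=B$ with the rescaling $\delta_1=\delta_4/B$, $b=1$ is a harmless variant of the paper's choice, which keeps $\delta_1=\delta_4$, $b=B$). However, your step \itemii{}$\Rightarrow$\itemiii{} contains a genuine error. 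The inequality you invoke, $\Hzerodelta{\delta_2}(v/\lambda)\le\lambda^{-2}\Hzerodelta{\delta_2}(v)$ for $\|v\|_\infty\le\delta_2$, is false whenever $\lambda<1$: for $\lambda\delta_2<|v_i|\le\delta_2$ the left-hand side is $+\infty$ while the right-hand side is finite. Since $\lambda$ comes out of \itemii{} (and from \itemi{}$\Rightarrow$\itemii{} one only gets $\lambda\in(0,1/b)$, which may well force $\lambda<1$), you cannot assume $\lambda\ge 1$. The Legendre computation only yields $\vphi-R^{\lambda}_{\delta_2}\vphi\le\lambda\,\Hzerodeltan{\lambda\delta_2}(\nabla\vphi)$, which is useless for functions whose gradient exceeds $\lambda\delta_2$ in sup-norm; consequently your claim that one may take $\delta_3=\delta_2$ (with $C=D/\lambda$) is unjustified. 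The correct bookkeeping — and this is precisely one of the places where the $\beta=0$ case differs from $\beta>0$ — is to shrink the admissible gradient level, as the paper does: $\delta_3=\delta_2\bigl(\lambda\wedge(2D)^{-1}\bigr)$ and $C=\lambda D$. Note also that the extra factor $(2D)^{-1}$ is not cosmetic: it guarantees $\EE\,\vphi(X)e^{\vphi(X)}<\infty$ for every convex $\vphi$ with $\|\nabla\vphi\|_\infty\le\delta_3$, using the integrability hypothesis $\EE e^{D^{-1}\Lzerodeltan{\delta_2}(X)}<\infty$, so that \itemii{} is actually applicable to the test functions; with $\delta_3=\delta_2$ this can fail.

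Two smaller points. First, in \itemiii{}$\Rightarrow$\itemiv{} you apply \itemiii{} to the non-smooth functions $k(t)Q_{t,\delta_4}\vphi$ and integrate the a.e.\ (Lebesgue) Hamilton--Jacobi relation against $d\mu$; mollifying the function alone does not legitimize this if $\mu$ charges the null set where the gradients fail to exist. The paper's device is to first perturb $\mu$ by convolution with a small Gaussian (which keeps the modified log-Sobolev constant under control) so that $\mu$ is absolutely continuous, and only then remove both smoothness of the test function and the a.e.\ issues; your sketch should incorporate this. Second, in \itemi{}$\Rightarrow$\itemii{} the exponential integrability of $\Lzerodeltan{\delta_2}(X)$ does not follow from merely ``enlarging constants'' or from integrability of $X$; the paper extracts it from the dual formulation of \itemi{} applied to $\vphi(x)=r|x|$, noting that for $r$ large $Q_{1,\delta_1}\vphi=\Lzerodeltan{\delta_1}$ because $|(\Lzerodelta{\delta_1})'|\le\delta_1$ — your proposed Herbst argument via \itemii{} risks circularity, since \itemii{} is only applicable to functions already known to satisfy $\EE f(X)e^{f(X)}<\infty$.
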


\begin{remark} The dependence of the constants is the following: \itemi{} implies \itemii{} with  $\lambda\in(0,1/b)$, $D=1/(1 - \lambda b)$, and $\delta_2=\delta_1$; \itemii{} implies \itemiii{} with $C=\lambda D$ and $\delta_3=\delta_2 (\lambda \wedge (2D)^{-1})$; \itemiii{} implies \itemiv{} with $B=C\vee 1$ and $\delta_4=\delta_3$; \itemiv{} implies \itemi{} with $b=B$ and $\delta_1=\delta_4$. Note also that for $a\geq 1$ we have $\Hzerodelta{a\delta}\leq \Hzerodelta{\delta}$ and $\Lzerodelta{\delta} \leq H^*_{0,a\delta} \leq a^2 \Lzerodelta{\delta}$, and hence if the inequalities in \itemi{}, \itemii{}, \itemiii{}, \itemiv{} hold for some $\delta>0$, then they also hold for every $\delta' \in (0,\delta)$.
\end{remark}

\begin{proof}[Sketch of the proof of Proposition~\ref{prop:transport}]
For simplicity we assume $n=1$. To prove that \itemi{} implies \itemii{} we follow the proof of \cite[Proposition~6.3]{gozlan}. Fix $\lambda\in(0,1/b)$ and let $f\colon\RR\to\RR$ be a function with $\int_{\RR} fe^f d\mu<\infty$. We define $\nu_f$ to be the measure with density $e^f d\mu/(\int_{\RR} e^f d\mu)$. Let $\pi$ be a measure on $\RR^2$ with the first marginal $\nu_f$ and the second $\mu$. We have $\pi(dxdy) = \nu_f(dx)p_x(dy)$ for some probability kernel $p_x$ and hence
\begin{align*}
\int_{\RR} f d\nu_f - \int_{\RR} f d\mu& =	 \int_{\RR^2} (f(x)-f(y)) \pi(dxdy) =
 \int_{\RR} \big( f(x) - \int_{\RR} f(y) dp_x(y)\big) d\nu_f(x)\\
 &\leq
 \int_{\RR} (f(x) - R^{\lambda} f(x) ) d\nu_f(x) + \lambda  \int_{\RR} \Lb\big(x - \int_\RR y dp_x(y)\big)d\nu_f(x),
\end{align*}
where the last inequality follows from the definition of $R^{\lambda}$.
Optimizing over all measures $\pi$ as above gives us
\begin{align*}
\int_{\RR} f d\nu_f - \int_{\RR} f d\mu &\leq
 \int_{\RR} (f(x) - R^{\lambda} f(x) ) d\nu_f(x) + \lambda  \overline{\mathcal{T}}(\mu|\nu_f)\\
&\leq \int_{\RR} (f(x) - R^{\lambda} f(x) ) d\nu_f(x) + \lambda b  H(\nu_f|\mu).
\end{align*}
It follows from Jensen's inequality that
\begin{equation*}
H(\nu_f|\mu) = \int_{\RR} \log\Big( \frac{e^f}{\int_{\RR} e^f d\mu}\Big) d\nu_f \leq
\int_{\RR} f d\nu_f - \int_{\RR} f d\mu
\end{equation*}
and hence we conclude from the preceding inequality that
\begin{align*}
\int_{\RR} f d\nu_f - \int_{\RR} f d\mu \leq
 \frac{1}{1-\lambda b}\int_{\RR} (f(x) - R^{\lambda} f(x) ) d\nu_f(x) .
\end{align*}
By the definition of $\nu_f$ this means that
\begin{align}
\int_{\RR} f e^fd\mu - \int_{\RR} f d\mu \int_{\RR} e^f d\mu \leq
 \frac{1}{1-\lambda b}\int_{\RR} (f(x) - R^{\lambda} f(x) ) e^fd\mu(x).\label{eq:stronger}
\end{align}
Therefore \eqref{eq:tau-lsi} is satisfied with $D = 1/(1-\lambda b)$ (the above inequality is formally stronger, since its left-hand side is by Jensen's inequality greater than $\Ent e^{f}$; see Remark~\ref{rem:stronger}).

It remains to prove the exponential integrability of the norm (we present the argument on the real line, however it can be easily adapted to $\RR^n$, e.g. if we decide to work with the $\ell_1^n$ norm all the calculations are performed on each coordinate separately; clearly the choice of the norm for this problem is irrelevant).

Consider the function $\varphi(x) = r|x|$. By the dual formulation of the transport cost inequality (see Remark~\ref{rem:dual}) and the assumption that $X$ is integrable, we get $\EE e^{b^{-1}Q_1 \varphi(X)} < \infty$.
It is easy to see that for $|x|$ large enough (depending on $r$) the infimum in the definition of $Q_1\varphi(x)$ is attained for $y = x - \sgn(x) ((\Lb) ')^{-1}(r)$, where $((\Lb)')^{-1}$ is the inverse of $(\Lb)'$ restricted to $[0,\infty)$.
Thus for $|x|$ large enough, we have $Q_1 \varphi(x) \ge r|x|/2$. Since $r$ is arbitrary, we conclude that $\EE e^{s|X|} < \infty$ for all $s$.

To prove that \itemii{} implies \itemiii{} note that if $\vphi$ is convex, then
\begin{align*}
\vphi(x) - R^{\lambda}\vphi(x) &= \sup_p \{ \int_{\RR} \vphi(x)-\vphi(y)dp(y) - \lambda \Lb\big(x - \int_\RR y dp(y)\big)\}\\
&\leq \sup_p \{ \int_{\RR} \vphi'(x)(x-y)dp(y) - \lambda \Lb\big(x - \int_\RR y dp(y)\big)\}\\
&\leq \sup_p \{ \vphi'(x) \cdot\big(x - \int_\RR y dp(y)\big) - \lambda \Lb\big(x - \int_\RR y dp(y)\big)\}.
\end{align*}
We have $uv\leq C \Hb(u) + \lambda \Lb(v)$
for some constant $C$ depending only on $\lambda$ and $\beta$.
Therefore
\begin{align*}
\vphi(x) - R^{\lambda}\vphi(x) \leq C \Hb(\vphi'(x))
\end{align*}
and hence \itemiii{} holds.

To see that \itemiii{} implies \itemiv{} (with $B=C$) we follow the reasoning from the proof of~\cite[Theorem 1.11]{MR3186934}. By a perturbation argument we can assume that $\mu$ is absolutely continuous. Indeed, if $\gamma$ is a Gaussian measure on $\RR^n$ with the covariance matrix being a sufficiently small multiple of identity, then the product measure $\mu\otimes \gamma$ on $\RR^n \times \RR^n$ satisfies the modified log-Sobolev inequality with constant $C$.  Let $\vphi:\RR^n \to \RR^n$ be a smooth convex Lipschitz function and let $\Phi:\RR^n\times\RR^n\to \RR^n$ be defined by the formula $\Phi (x,y) = \varphi(x+\eps y)$, $x,y\in\RR$. Applying the modified log-Sobolev inequality to the measure $\mu\otimes \gamma$ and the function $\Phi$, we see  that the convolution $\mu*\gamma_\varepsilon$, where $\gamma_\varepsilon(A) = \gamma(A/\varepsilon)$, satisfies the modified log-Sobolev inequality on $\RR^n$ with constant $C_\varepsilon \to C$ as $\varepsilon \to 0$. If one can prove that \eqref{eq:hypercontr} is satisfied by $\mu*\gamma_\varepsilon$ with $B_\varepsilon  = C_\varepsilon$, then one can obtain it for $\mu$ with $B= C$ by the Lebesgue dominated convergence theorem. Note that if $\mu$ is absolutely continuous, then another standard approximation shows that \itemiii{} holds for all convex Lipschitz functions (by the Rademacher theorem the gradient is then almost surely well defined).

If $\vphi\colon\RR\to\RR$ is a convex function, then so is $Q_t\vphi$ and it satisfies the Hamilton-Jacobi equation
\begin{equation}
{\partial_t} Q_t\vphi + \Hb(\partial_x Q_t\vphi ) = 0 \label{eq:ham-jac}
\end{equation}
(see~\cite[Chapter~3.3.2]{evans}). More precisely, the function $(t,x) \mapsto Q_t\vphi(x)$ is Lipschitz and the equation is satisfied a.e. with respect to the Lebesgue measure on $(0,\infty)\times \RR^n$.
We fix $t_0\in[0,C]$, define
\begin{equation*}
k(t) = (1+C^{-1}(t-t_0))\indbr{t\leq t_0} + (1+C^{-1}\beta^{-1} (t-t_0))^{\beta}\indbr{t>t_0},
\end{equation*}
and  set $F(t) = \int_{\RR} e^{k(t)Q_t\vphi(x)}d\mu(x)$
for $t>0$. Using the absolute continuity of $\mu$ together with integrability properties of Lipschitz functions, one can show that $F$ is locally Lipschitz, $F'(t)$ exists for almost all $t>0$, and

\begin{align*}
k(t)F'(t) &=
k(t) \int_{\RR} e^{k(t)Q_t\vphi(x)} \big( k'(t) Q_t\vphi(x) + k(t)\partial_t Q_t\vphi(x)\big) d\mu(x)\\
&= k(t)\int_{\RR} e^{k(t)Q_t\vphi(x)} \big( k'(t) Q_t\vphi(x) - k(t) \Hb(\partial_x Q_t\vphi(x))\big) d\mu(x)\\
&=
k'(t) F(t)\log F(t) + k'(t)\Ent e^{k(t)Q_t\vphi(X)} \\
\MoveEqLeft[-12] - k^2(t) \int_{\RR} e^{k(t)Q_t\vphi(x)}\Hb(\partial_x Q_t\vphi(x)) d\mu(x)\\
&\leq
k'(t) F(t)\log F(t) + C k'(t)\int_{\RR} e^{k(t)Q_t\vphi(x)}\Hb(k(t)\partial_x Q_t\vphi(x)) d\mu(x)\\
\MoveEqLeft[-12]
- k^2(t) \int_{\RR} e^{k(t)Q_t\vphi(x)}\Hb(\partial_x Q_t\vphi(x)) d\mu(x)\\
&\leq
 k'(t) F(t)\log F(t)\\
\MoveEqLeft[-1]
 + \int_{\RR} e^{k(t)Q_t\vphi(x)}\Hb(\partial_x Q_t\vphi(x)) \cdot\big(Ck'(t)\max\{ |k(t)|^2, |k(t)|^{\frac{\beta+1}{\beta}}\} - k^2(t) \big)\\
& =  k'(t) F(t)\log F(t).
\end{align*}

where we used \eqref{eq:ham-jac}, the inequality~\eqref{eq:mod-lsi-H}, the fact that
\begin{equation*}
\Hb(a x) \leq \max\{|a|^2, |a|^{(\beta+1)/\beta}\}\Hb(x),
\end{equation*}
and the definition of $k(t)$ in the last equality.
 The above differential inequality is equivalent to $(\log(F(t))/k(t))'\leq 0$ for almost all $t>0$. Since $Q_t\vphi\leq\vphi$, we arrive at

\begin{equation*}
\frac{\log F(t)}{k(t)} \leq  \liminf_{s\to 0^+} \frac{\log F(s)}{k(s)} \leq \lim_{s\to 0^+} \frac{\log( \int_{\RR} e^{k(s)\vphi(x)}d\mu(x))}{k(s)} = \log \| e^{\vphi(X)}\|_{k(0)},
\end{equation*}
which is equivalent to the assertion.

Finally, \itemiv{} implies \itemi{} (with $b=B\vee 1$). Indeed, we plug $t_0 = B$ and $t=B\land 1\le 1$ (so that $k(t) = (B\vee 1)^{-1}$ and $k(0)=0$) into inequality~\eqref{eq:hypercontr}, and using the fact that $Q_t\varphi$ decreases with $t$ we arrive at a dual formulation of the transport cost inequality (see Remark~\ref{rem:dual}).  This ends the proof of the last implication.
\end{proof}

The proof in the case $\beta=0$ follows roughly the sames lines. Below we sum up the more important changes one has to make.

\begin{proof}[Sketch of the proof of Proposition~\ref{prop:transport-zero}]
For simplicity we assume $n=1$. To prove that \itemi{} implies the entropy bound of \itemii{} (with $\delta_2=\delta_1$) we follow exactly the same reasoning as above, in the proof of Proposition~\ref{prop:transport}. As for the exponential integrability of $\Lzerodeltan{\delta_1}$, we again consider $\varphi(x) = r|x|$ and note that by the dual formulation of \itemi{}, $\EE e^{b^{-1} Q_{1,\delta_1} \varphi(X)} < \infty$. But, as one can easily see, the boundedness of the derivative of $\Lzerodelta{\delta_1}$ implies that for $r$ sufficiently large $Q_{1,\delta_1} \varphi(x) = \inf_y\{ r|y| + \Lzerodelta{\delta_1}(x-y)\} = \Lzerodelta{\delta_1}(x)$, which shows that $\EE e^{b^{-1} \Lzerodelta{\delta_1}(X)} < \infty$.

To prove that \itemii{} implies \itemiii{} with $\delta_3=\delta_2(\lambda \wedge (2D)^{-1})$ and $C=\lambda D$, notice that if $\vphi$ is convex and $|\vphi'|\leq \delta_3$, then
$\EE \varphi(X)e^{\varphi(X)} < \infty$ and
\begin{equation*}
\vphi(x) - R^{\lambda}_{\delta_2}\vphi(x) \leq \sup_p \{ \vphi'(x) \cdot\big(x - \int_\RR y dp(y)\big) - \lambda \Lzerodelta{\delta_2}\big(x - \int_\RR y dp(y)\big)\}.
\end{equation*}
We have
\begin{align*}
uv& \leq (\lambda\Lzerodelta{\delta_2})^*(u) + \lambda \Lzerodelta{\delta_2}(v) =\lambda \Hzerodelta{\delta_2}(u/\lambda) + \lambda \Lzerodelta{\delta_2}(v) \\
&= \lambda\Hzerodelta{\lambda\delta_2}(u) + \lambda \Lzerodelta{\delta_2}(v) \le \lambda \Hzerodelta{\delta_3}(u) + \lambda \Lzerodelta{\delta_2}(v).
\end{align*}
Therefore $\vphi(x) - R^{\lambda}_{\delta_2}\vphi(x) \leq \lambda  \Hzerodelta{\delta_3}(\vphi'(x))$ and hence \itemiii{} holds.

As in the case $\beta > 0$, for the proof that \itemiii{} implies \itemiv{} (with $\delta_4 = \delta_3$ and $B=C\vee 1$) we can assume that $\mu$ is absolutely continuous. Thus, by standard approximation arguments, the log-Sobolev inequality of \itemiii{} is satisfied for all convex Lipschitz functions (the gradient is then almost surely well defined). In what follows, without loss of generality, we will also assume that $C \ge 1$.

Note that $|\partial_x t\Lzerodelta{\delta_3}((x-y)/t)| = |(\Lzerodelta{\delta_3})'((x-y)/t)|\leq \delta_3$, so for $t > 0$, $x\mapsto Q_{t,\delta_3}\varphi(x)$ is $\delta_3$-Lipschitz. An adaptation of the reasoning from~\cite[Chapter~3.3.2]{evans} shows that $(t,x)\mapsto Q_{t,\delta_3}\varphi(x)$ is Lipschitz and
\begin{equation*}
{\partial_t} Q_{t,\delta_3}\vphi + H_{0,\delta_3}(\partial_x Q_{t,\delta_3}\vphi) = {\partial_t} Q_{t,\delta_3}\vphi + \delta_3^{-2}(\partial_x Q_{t,\delta_3}\vphi )^2 = 0
\end{equation*}
Lebesgue a.e. on $(0,\infty)\times \RR$ (we remark that for $n > 1$ one needs to work  with the $\ell_1^n$ rather than with the Euclidean norm to get that $t \mapsto Q_{t,\delta_3} \varphi(x)$ is Lipschitz).

Next, one shows that for any $\tilde{C}\ge C$,
 the function  $F(t) = \int_{\RR} e^{\tilde{k}(t)Q_{t,\delta_3}\vphi(x)}d\mu(x)$, $t>0$, is well defined,  where we have denoted
\begin{equation*}
\tilde{k}(t) = \tilde{C}^{-1}\min\{ (1+\tilde{C}^{-1}(t-t_0)), 1\},
\end{equation*}
and $t_0\in[0,\tilde{C}]$ is fixed number. Indeed,
\begin{equation*}
\tilde{k}(t) Q_{t,\delta_3}\vphi(x) \leq \tilde{k}(t)  t \Lzerodelta{\delta_3} (x/t)+\tilde{k}(t)\vphi(0)\leq \tilde{k}(t) \delta_3|x|+\tilde{k}(t)\vphi(0).
\end{equation*}
For $|x|>2/\delta_3$ the right-hand side above is bounded by $\tilde{C}^{-1} (\Lzerodelta{\delta_3}(x)+1+\vphi(0))$,
which together with the assumed exponential integrability of $\Lzerodelta{\delta_3}$ shows that the integral defining $F$ is finite.

Using absolute continuity of $F$ and again exponential integrability of $\Lzerodelta{\delta_3}$ one proves that for $\tilde{C} > C$, $F$ is differentiable a.e. Moreover, the same argument as for $\beta > 0$ shows  that $F$ satisfies $\tilde{k}(t)F'(t)\leq \tilde{k}'(t)F(t)\log  F(t)$ for almost all $t>0$. Let us list the additional observations needed to carry out the calculations. First, $|\partial_x \tilde{k}(t) Q_{t,\delta_3}\vphi| \leq|\partial_x Q_{t,\delta_3}\vphi|  \leq \delta_3$, which allows us to use the modified log-Sobolev inequality. Second, we have
$H_{0,\delta_3}(\tilde{k}(t)\partial_x Q_{t,\delta_3}\varphi(x)) = \tilde{k}(t)^2 H_{0,\delta_3}(\partial_x Q_{t,\delta_3}\varphi(x))$. Finally $\tilde{C}\tilde{k}'(t) \tilde{k}(t)^2 \le \tilde{k}(t)^2$. We remark that to verify the above properties one uses the assumption that $C \ge 1$.

Integration of  the differential inequality and taking the limit $\tilde{C} \to C$ ends the proof of the implication.

Finally, \itemiv{} implies \itemi{} (with $b=B$ and $\delta_1=\delta_4$) as in the case $\beta>0$: it is enough to check that
\begin{equation*}
\big(\int_{\RR} e^{B^{-1}Q_{1,\delta_3}\vphi(x)} d\mu(x) \big)^{B}  e^{-\int_{\RR} \vphi(x) d\mu(x)}\leq 1
\end{equation*}
holds for all convex Lipschitz functions $\vphi\colon\RR\to\RR$ bounded from below (see Remark~\ref{rem:dual}). This is easy to recover from the hypercontractive inequality in \itemiv{}. If $B < 1$ one sets $t_0 = t = B$ and uses the inequality $Q_{1,\delta_3}\vphi \le Q_{t,\delta_3}\vphi$, otherwise one sets $t = t_0 = 1$.\end{proof}

\begin{remark}\label{rem:Feldheim}
As mentioned in the Introduction, very recently Feldheim et al. \cite{Feldheim} proved that in the case of symmetric measures on $\RR$, the condition $\mu \in \mzero$ for some $m,\sigma$ is equivalent to the infimum convolution inequality
\begin{displaymath}
\int_\RR e^{Q_{1,\delta} \varphi (x)} d\mu(x) \int_\RR e^{- \varphi(x)}d\mu(x) \le 1
\end{displaymath}
for some $\delta$, which is a formally stronger property than the inequality \itemi{} of Proposition \ref{prop:transport-zero} (see Remark~\ref{rem:dual}). Thus for $\beta = 0$ the modified log-Sobolev inequality is in fact equivalent to a stronger concentration property.
Gozlan et al. in \cite{gozlan_new} extended the characterization of \cite{Feldheim} to much more general transport costs, including all the cases considered by us. It would be interesting to provide an intrinsic characterization also for measures on the line satisfying general modified log-Sobolev inequalities. One expects that for $\beta \in (0,1]$ such measures form a strictly larger class than the class of measures satisfying the inequality of \cite{gozlan_new}.
\end{remark}

\section{A remark about the non-convex setting}\label{sec:remark-non-convex}

As we noted in Remark~\ref{rem:poinc}, a Borel probability measures satisfying inequality \eqref{eq:mod-lsi-zero} for all smooth convex functions (with bounded derivatives), satisfies also the Poincar\'e inequality for all smooth convex functions. It follows from \cite[Theorem~1.4]{bobkov-goetze} that in this case $\mu\in\mzero$ for some $m$ and $\sigma$.

We will now show that sufficiently nice Borel probability measures $\mu$ which satisfy the modified logarithmic Sobolev inequality \eqref{eq:mod-lsi-beta} for all (not necessarily log-convex)  smooth functions belong to the class $\mbeta$ for some $m$ and $\sigma$. More precisely, for $\beta>0$ we have the following result.

\begin{proposition}
Le $\mu$ be a Borel probability measure on $\RR$, absolutely continuous with respect to the Lebesgue measure. Suppose there exist constants $C<\infty$ and $\beta\in(0,1]$ such that for all smooth functions
\begin{equation*}
\Ent f(X)^2 \leq C \EE (f'(X)^2 \lor |f'(X)|^{\frac{\beta+1}{\beta}}),
\end{equation*}
where $X$ is a random variable with law $\mu$. Denote by $M$ the median  and by $n$ the density of $\mu$, and assume also that
\begin{equation*}
\frac{1}{\beta} n(x)^{-\beta}\geq \eps \int_{x\land M}^{x\lor M} n(t)^{-\beta}dt
 \end{equation*}
 for some $\eps>0$ and all $x\neq M$.  Then there exist constants $m, 	\sigma<\infty$, such that $\mu\in\mbeta$.
\end{proposition}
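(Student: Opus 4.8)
The plan is to verify condition \itemii{} of Proposition~\ref{prop:equiv-class-mbeta}, that is, to produce $h>0$, $\alpha\in(0,1)$ and $m>0$ with $\mu([x+hx^{-\beta},\infty))\le\alpha\,\mu([x,\infty))$ for $x>m$, together with the mirror bound on the left tail. Since $-X$ again satisfies all the hypotheses (its median is $-M$ and its density $n(-\,\cdot\,)$ obeys the same regularity condition), it suffices to treat the right tail. Write $F(x)=\mu([x,\infty))$ and $L(x)=-\log F(x)$; absolute continuity of $\mu$ guarantees that $F$ is continuous and that $L$ is continuous, nondecreasing, and increases to $+\infty$.

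The core is a capacity-type inequality coming from ``triangular'' test functions. Fix $x$ large and $y>x$ with $y-x\le1$, and (after a routine smoothing) let $f$ be a smooth function with $f\equiv0$ on $(-\infty,x]$, $f\equiv1$ on $[y,\infty)$, and $f'\ge0$ supported in $(x,y)$ with $\sup f'$ arbitrarily close to $(y-x)^{-1}\ge1$. Since $\Hb$ is nondecreasing on $[0,\infty)$ and coincides there with $t\mapsto t^{(\beta+1)/\beta}$ on $[1,\infty)$, we get $\EE\Hb(f'(X))\le\Hb(\sup f')\,\mu([x,y))\le(1+o(1))\,(y-x)^{-(\beta+1)/\beta}F(x)$. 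On the other hand, Jensen's inequality applied to $t\mapsto t\log t$ and to the normalized restriction of $\mu$ to $\{f>0\}$ gives $\Ent f(X)^2\ge\big(\EE f(X)^2\big)\log\tfrac1{\mu(\{f>0\})}\ge F(y)\,L(x)$, using $\{f>0\}\subseteq(x,\infty)$ and $f\equiv1$ on $[y,\infty)$. Plugging these bounds into the hypothesis $\Ent f(X)^2\le C\,\EE\Hb(f'(X))$, writing $q=(\beta+1)/\beta$ and $y=x+hx^{-\beta}$ (so $y-x\le1$ exactly when $h\le x^{\beta}$), and letting the smoothing vanish, we obtain
\[
F\big(x+hx^{-\beta}\big)\,L(x)\ \le\ C\,h^{-q}\,x^{\beta+1}\,F(x)\qquad\text{for $x$ large and }0<h\le x^{\beta}.
\]

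It remains to bootstrap this into decay of the correct Weibull order. Apply the displayed inequality with $h=h(x):=\big(2C\,x^{\beta+1}/L(x)\big)^{1/q}$, which is admissible ($h(x)\le x^{\beta}$) as soon as $L(x)\ge 2C$; then the increment equals $h(x)x^{-\beta}=(2C)^{1/q}L(x)^{-1/q}$ and the inequality becomes $F\big(x+(2C)^{1/q}L(x)^{-1/q}\big)\le\tfrac12F(x)$, i.e.
\[
L\big(x+(2C)^{1/q}L(x)^{-1/q}\big)\ \ge\ L(x)+\log2 .
\]
Starting from $x_0$ with $L(x_0)\ge 2C$ and iterating, $L(x_k)\ge L(x_0)+k\log2$, while $x_k-x_0=(2C)^{1/q}\sum_{j<k}L(x_j)^{-1/q}$ is of order $k^{1-1/q}$ because $1/q<1$; hence $k\ge c\,x_k^{\,q/(q-1)}=c\,x_k^{\,\beta+1}$ and, by monotonicity of $L$, $L(x)\ge c_1\,x^{\beta+1}$ for all $x$ large. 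Feeding this lower bound back into the displayed capacity inequality with a now \emph{constant} $h$ yields $F(x+hx^{-\beta})\le\frac{C}{c_1 h^q}F(x)$, which is $\le\tfrac12F(x)$ once $h$ is chosen large. This is condition \itemii{} of Proposition~\ref{prop:equiv-class-mbeta} for the right tail; by symmetry it also holds on the left, whence $\mu\in\mbeta$ for suitable $m,\sigma$.

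The step I expect to be the main obstacle is the bootstrap: finitely many naive iterations only push the decay exponent up towards $\beta+1$ without attaining it, so one must sum the recursion in one stroke and check that the series $\sum_k L(x_k)^{-1/q}$ grows exactly like $k^{1-1/q}$, which is what pins down the exponent $q/(q-1)=\beta+1$; the smoothing of the triangular test functions without loss in the constants is a secondary technicality, as is the (trivial) case of bounded support. Finally, absolute continuity of $\mu$ is used above only to exclude jumps of $F$; the quantitative hypothesis $\tfrac1\beta n(x)^{-\beta}\ge\eps\int_{x\land M}^{x\lor M}n(t)^{-\beta}\,dt$ does not enter the route just sketched, but it is precisely the ingredient one invokes when instead deducing $\mu\in\mbeta$ from the Muckenhoupt/Hardy-type characterization of one-dimensional modified log-Sobolev inequalities, where it is needed to pass from the condition on the density $n$ to the decay of $x\mapsto\mu([x,\infty))$.
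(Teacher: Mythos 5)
Your proof is correct, and it follows a genuinely different route from the paper. The paper's argument invokes the necessity part of the Barthe--Roberto criterion to obtain the capacity-type bound \eqref{eq:last} involving $\int_M^y n(t)^{-\beta}dt$, then uses H\"older's inequality on the window $[x,x+x^{-\beta}]$ to convert that integral into the mass of the window, and finally compares with the $e^{-cx^{1+\beta}}$ concentration to reach condition \itemii{} of Proposition~\ref{prop:equiv-class-mbeta}; both the absolute continuity and the hypothesis $\frac1\beta n(x)^{-\beta}\ge\eps\int_{x\land M}^{x\lor M}n(t)^{-\beta}dt$ enter precisely through that criterion, exactly as you guessed in your closing paragraph. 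You instead work directly with the distribution function: the smoothed ramp test functions, combined with the restriction bound $\Ent f(X)^2\ge\big(\EE f(X)^2\big)\log\frac1{\mu(\{f>0\})}$ (which is correct, by conditioning on $\{f>0\}$ and nonnegativity of entropy), give the capacity inequality $F(x+hx^{-\beta})L(x)\le C h^{-(\beta+1)/\beta}x^{\beta+1}F(x)$, and your bootstrap with $h(x)=(2Cx^{\beta+1}/L(x))^{\beta/(\beta+1)}$ is arithmetically right: the increment is $(2C)^{\beta/(\beta+1)}L(x)^{-\beta/(\beta+1)}$, the iteration yields $L(x)\ge c_1x^{\beta+1}$ (the passage from the sequence $(x_k)$ to all large $x$ is harmless because the increments are bounded by $1$ once $L\ge 2C$), and feeding this back with a constant $h$ gives \itemii{} with $\alpha=1/2$; the symmetry argument for $-X$ and the bounded-support degeneracies are routine, as you note. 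What your route buys is self-containedness and generality: it replaces the appeal to the Barthe--Roberto machinery and to Herbst-type concentration by an elementary iteration, and it never uses the density regularity condition (nor, in essence, absolute continuity beyond excluding atoms), so it proves a formally stronger statement --- in the same spirit as the paper's final remark that for $\beta=1$ the Bobkov--G\"otze criterion allows one to drop the density assumptions. What the paper's route buys is brevity given the known one-dimensional criteria, at the price of the extra hypotheses on $n$.
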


\begin{proof}
It follows from the Barthe-Roberto criterion (see~\cite[Theorem 10]{barthe-roberto})  that for some constant $K$ and all $y\geq M$,
\begin{equation}\label{eq:last}
\mu([y,\infty))\log \frac{1}{\mu([y,\infty))} \Big(\int_M^y n(t)^{-\beta}dt\Big)^{1/\beta} \leq K.
\end{equation}
H\"older's inequality (for exponents $\beta+1$ and $(\beta+1)/\beta$) gives us
\begin{multline*}
\frac{1}{x^{\beta}} =  \int_x^{x+1/x^{\beta}} \negativespace  n(t)^{-\beta/(\beta+1)} n(t)^{\beta/(\beta+1)} dt\\
 \leq \Big(\int_x^{x+1/x^{\beta}} \negativespace n(t)^{-\beta} dt\Big)^{1/(\beta+1)}\Big(\int_x^{x+1/x^{\beta}} \negativespace n(t)dt \Big)^{\beta/(\beta+1)},
\end{multline*}
and therefore
\begin{equation*}
\frac{1}{x^{1+\beta}}  \leq
\Big(\int_x^{x+1/x^{\beta}} \negativespace n(t)^{-\beta} dt\Big)^{1/\beta}\mu([x, x+1/x^{\beta})).
\end{equation*}
Since $\mu$ has the concentration property of order $1+\beta$, using \eqref{eq:last} for $y=x+1/x^{\beta}$ we get that
\begin{align*}
\frac{D}{K} x^{1+\beta} \mu([x+1/x^{\beta},\infty)) \leq \Big(\int_M^{x+1/x^{\beta}} \negativespace n(t)^{-\beta}dt\Big)^{-1/\beta}\leq x^{1+\beta}\mu([x, x+1/x^{\beta}))
\end{align*}
 for some constant $D$ and $x$ large enough. Hence, for $x$ large enough,
\begin{equation*}
\mu([x+1/x^{\beta},\infty)) \leq \frac{1}{1+D/K} \mu([x+1/x^{\beta},\infty)).
\end{equation*}
Since we can deal analogously with the left tail, the claim follows from Proposition~\ref{prop:equiv-class-mbeta}.
\end{proof}

\begin{remark}As proved in \cite{MR2163396}, in the case of $\beta = 1$ we do not need the assumption of absolute continuity, as in this case the Barthe-Roberto criterion can be replaced by a more general Bobkov-G\"otze criterion \cite{MR1682772}, in which one does not require the existence of a density.
\end{remark}

\bibliographystyle{amsplain}	
\providecommand{\bysame}{\leavevmode\hbox to3em{\hrulefill}\thinspace}
\providecommand{\MR}{\relax\ifhmode\unskip\space\fi MR }
\providecommand{\MRhref}[2]{%
  \href{http://www.ams.org/mathscinet-getitem?mr=#1}{#2}
}
\providecommand{\href}[2]{#2}

\end{document}